\nonstopmode \numberwithin{equation}{section}
\newtheorem{theorem}{Theorem}[section]
\newtheorem{corollary}{Corollary}[section]
\newtheorem{remark}{Remark}[section]
\begin{document}
\bibliographystyle{amsplain}

\title{{{ Starlikeness of the generalized integral transform using duality techniques
}}}

\author{
Satwanti Devi
}
\address{
Department of  Mathematics  \\
Indian Institute of Technology, Roorkee-247 667,
Uttarkhand,  India
}
\email{ssatwanti@gmail.com}

\author{
A. Swaminathan
}
\address{
Department of  Mathematics  \\
Indian Institute of Technology, Roorkee-247 667,
Uttarkhand,  India
}
\email{swamifma@iitr.ernet.in, mathswami@gmail.com}

\bigskip

\begin{abstract}
For $\alpha\geq 0$, $\delta>0$, $\beta<1$ and $\gamma\geq 0$,
the class $\mathcal{W}_{\beta}^\delta(\alpha,\gamma)$ consist
of analytic and normalized functions $f$ along with the
condition

\begin{align*}
{\rm Re\,} e^{i\phi}\left(\dfrac{}{}(1\!-\!\alpha\!+\!2\gamma)\!\left({f}/{z}\right)^\delta
+\left(\alpha\!-\!3\gamma\!+\!\gamma\left[\dfrac{}{}\left(1-{1}/{\delta}\right)\left({zf'}/{f}\right)+
{1}/{\delta}\left(1+{zf''}/{f'}\right)\right]\right)\right.\\
\left.\dfrac{}{}\left({f}/{z}\right)^\delta \!\left({zf'}/{f}\right)-\beta\right)>0,
\end{align*}
where $\phi\in\mathbb{R}$ and $|z|<1$, is taken into
consideration. The class $\mathcal{S}^\ast_s(\zeta)$ be the
subclass of the univalent functions, defined by the analytic
characterization ${\rm Re}{\,}\left({zf'}/{f}\right)>\zeta$,
for $0\leq \zeta< 1$, $0<\delta\leq\frac{1}{(1-\zeta)}$ and
$|z|<1$. The admissible and sufficient conditions on
$\lambda(t)$ are examined, so that the generalized and
non-linear integral transforms

\begin{align*}
V_{\lambda}^\delta(f)(z)= \left(\int_0^1 \lambda(t) \left({f(tz)}/{t}\right)^\delta dt\right)^{1/\delta},
\end{align*}
maps the function from
$\mathcal{W}_{\beta}^\delta(\alpha,\gamma)$ into
$\mathcal{S}^\ast_s(\zeta)$. Moreover, several interesting
applications for specific choices of $\lambda(t)$ are
discussed, that are related to some well-known integral
operators.
\end{abstract}

\subjclass[2000]{30C45, 30C55, 30C80}

\keywords{
Duality techniques, Integral transforms, Univalence,
Starlike functions, Convolution, Hypergeometric function.
}

\maketitle

\pagestyle{myheadings} \markboth{Satwanti Devi and A. Swaminathan }{ Starlikeness of the generalized integral transform using duality techniques
}

\section{Introduction}

Let $\mathbb{D}=\{ z\in\mathbb{C}: |z|<1\}$ denote the open
unit disk and $\mathcal{A}$ be the class of all normalized and
analytic functions $f$ defined in the domain $\mathbb{D}$ with
the condition $f(0)\!=\!0\!=f'(0)\!-\!1$. Further, let
$\mathcal{S}\subset\mathcal{A}$ denote the class consisting of
all univalent functions in $\mathbb{D}$.

In \cite{FourRusExtremal}, R. Fournier and S.Ruscheweyh considered the the linear functional
\begin{align*}
L_{\Lambda}(f) =\inf_{z\in{\mathbb{D}}} \int_0^1 \Lambda(t) \left({\rm Re \,} \dfrac{f(tz)}{tz}-\dfrac{1}{(1+t)^2}\right) dt, \quad f\in {\mathcal{S}},
\end{align*}
and
\begin{align*}
L_{\Lambda}(S):=\inf_{f\in {\mathcal{S}}} L_{\Lambda}(f)
\end{align*}
for an integrable function $\Lambda:[0,1]\rightarrow {\mathbb{R}}$, positive in $(0,1)$. Since $L_{\Lambda}(F)\leq 0$ for the Koebe function
$F$, it is clear that $L_{\Lambda}({\mathcal{S}})\leq 0$ for every admissible weight function $\Lambda$ and
R. Fournier and S.Ruscheweyh \cite{FourRusExtremal} posed the problem of finding existence and characterization of the weight functions
for which $L_{\Lambda}({\mathcal{S}}) =0$.  As mentioned in \cite{FourRusExtremal}, since
it is not possible to solve the problem for the class ${\mathcal{S}}$, the main focus was shifted into its important subclass of close-to-convex functions
${\mathcal{C}}$.
For further study of this problem, the following subclass of ${\mathcal{S}}$ is important.

The class $\mathcal{S}^{\ast}(\xi)$ having the analytic characterization
\begin{align*}
{\rm{Re}}{\,} \left(\dfrac{zf^\prime}{f}\right)>\xi,\quad 0\leq\xi<1, \quad z\in\mathbb{D},
\end{align*}
is the generalization of the class of starlike functions, $\mathcal{S}^{\ast}: =\mathcal{S}^{\ast}(0)$, which contains
all the functions $f$ with the property that the domain $f(\mathbb{D})$ is
starlike with respect to the origin.

Using the duality techniques, in \cite{FourRusExtremal}, R. Fournier and S.Ruscheweyh provided solution to two related extremal problems as
\begin{itemize}
\item[(i)] For $\Lambda$ integrable on $[0,1]$ and positive on $(0,1)$, if $\displaystyle \dfrac{\Lambda(t)}{1-t^2}$ is decreasing on $(0,1)$,
then $\displaystyle L_{\Lambda}({\mathcal{C}})=0$, and
\item[(ii)] $\displaystyle V_{\lambda}({\mathcal{P_{\beta}}}) \subset {\mathcal{S}^{\ast}} \Leftrightarrow \displaystyle L_{\Lambda}({\mathcal{C}})=0$,
where
\begin{align}\label{eqn-duality-operator}
V_{\lambda}(f): = \int_0^1 \lambda(t) \dfrac{f(tz)}{t} dt,
\end{align}
with $\lambda:[0,1]\rightarrow {\mathbb{R}}$ is nonnegative, satisfying $\int_0^1 \lambda(t)dt=1$ and
\begin{align*}
{\mathcal{P}}_{\beta} = \left\{ \dfrac{}{} f\in {\mathcal{A}}: {\rm Re\,} \left(\dfrac{}{}e^{i\alpha}(f'(z)-\beta)\right)>0, \quad
\alpha\in {\mathbb{R}}, \quad z\in{\mathbb{D}}\right\}.
\end{align*}
\end{itemize}

Note that the operator \eqref{eqn-duality-operator} contains several well-known operators such as Bernardi, Komatu and Hohlov as its special
cases for specific choices of $\lambda(t)$ and has been studied extensively by several authors. For details on these operators
see \cite{Ali, saigo, DeviPascu} and references therein. Further study of this problem, where the operator \eqref{eqn-duality-operator}
carries generalization of the functional class ${\mathcal{P}}_{\beta}$  involving linear combinations of the functionals
$f(z)/z$, $f'(z)$ and $zf"(z)$ to $\mathcal{S}^{\ast}(\xi)$ were carried by many researchers in the recent future. All these combinations were
unified in a class introduced in \cite{AbeerS} and for the most general result in this direction, see \cite{SarikaStarlike, SuzeiniStarlike, DeviPascu}.
Since such combinations and the generalization of the operator encompass large number of previous results such as univalency, subordination
and positivity results on classes of functions, we are interested in considering the following generalized operator
\begin{align}\label{eq-weighted-integralOperator}
F_\delta(z):= V_{\lambda}^\delta(f)(z)=\left(\int_0^1 \lambda(t) \left(\dfrac{f(tz)}{t}\right)^\delta dt\right)^{1/\delta},
\quad \delta>0.
\end{align}

The integral operator defined in \eqref{eq-weighted-integralOperator} and its more generalized form was considered in the work of I. E. Bazilevi\v c
\cite{BazilevicIntegOper} (see also \cite{Aghalary, SinghIntOperator}). Note that when $\delta=1$, the operator
\eqref{eq-weighted-integralOperator} reduces to \eqref{eqn-duality-operator}. Study of this operator is really useful for the generalization
of the functional class ${\mathcal{P}}_{\beta}$ which is defined as follows.

{\small{
\begin{align*}
\mathcal{W}_{\beta}^\delta(\alpha,\gamma)\!:\!=\!\left\{\!f\!\in\!\mathcal{A}:
{\rm Re\,} e^{i\phi}\left((1\!-\!\alpha\!+\!2\gamma)\!\left(\frac{f}{z}\right)^\delta
\!+\!\left(\alpha\!-\!3\gamma\!+\!\gamma\left[\left(1\!-\!\frac{1}{\delta}\right)\left(\frac{zf'}{f}\right)\!+\!
\frac{1}{\delta}\left(1\!+\!\frac{zf''}{f'}\right)\right]\right)\right.\right.\\
\left.\left.\left(\frac{f}{z}\right)^\delta \!\left(\frac{zf'}{f}\right)-\beta\right)>0,
\,\,z\in\mathbb{D},\,\,\phi\in\mathbb{R}\right\}.
\end{align*}}}
\noindent Here, $\alpha \geq 0$, $\beta<1$, $\gamma\geq 0$ and $\phi\in {\mathbb{R}}$. Note that
$\mathcal{W}_{\beta}^\delta(\alpha,0)\equiv P_\alpha(\delta,\beta)$ is the class considered by A. Ebadian et al in \cite{Aghalary},
$R_\alpha(\delta,\beta):\equiv \mathcal{W}_{\beta}^\delta(\alpha+\delta+\delta\alpha,\delta\alpha)$ is a closely related class and
$\mathcal{W}_{\beta}^1(\alpha,\gamma)\equiv\mathcal{W}_{\beta}(\alpha,\gamma)$ introduced by R.M. Ali et al in \cite{AbeerS}.

We also consider the following class related to $\mathcal{S}^{\ast}(\xi)$, given by
\begin{align}\label{eq-gener:starlike-related:classes}
f\in \mathcal{S}^\ast_s(\zeta)\,\,\Longleftrightarrow\,\,
z^{1-\delta}f^{\delta}\in\mathcal{S}^\ast(\xi),
\end{align}
for $\xi=1-\delta+\delta\zeta$ and $0\leq\xi<1$. It is clear that the class $\mathcal{S}^\ast_s(\zeta)$ has the analytic characterization
\begin{align*}
{\rm{Re}}{\,}\left(\delta\dfrac{zf^\prime}{f}+(1-\delta)\right)>\xi, \quad \delta>0, \quad 0\leq\xi<1.
\end{align*}
Further, when $\delta=1$, $\mathcal{S}^{\ast}_s(\zeta)$ and $\mathcal{S}^\ast(\xi)$ are equal. Wherever $\xi$ is used in
the sequel, it denotes the term $(1-\delta+\delta\zeta)$.

In the present work the duality technique is used to determine the sharp estimate for the parameter $\beta$, so that the
weighted integral operator $V_\lambda^\delta$ defined in \eqref{eq-weighted-integralOperator} carries the function from
$\mathcal{W}_\beta^\delta(\alpha,\gamma)$ to $\mathcal{S}^\ast_s(\zeta)$, where $0\leq \zeta< 1$ and $0<\delta\leq\frac{1}{(1-\zeta)}$.
Certain related preliminaries and the main results involving the necessary and sufficient conditions
are given in Section \ref{Sec-Gener:Starlike-Main:Result} which ensures
$V_\lambda^\delta(\mathcal{W}_\beta^\delta(\alpha,\gamma))\!\subset\!\mathcal{S}^\ast_s(\zeta)$.
The simpler sufficient criterion are obtained in Section \ref{Sec-Gener:Starlike-Suff:Cond}, which verifies
$V_\lambda^\delta(f)(z)\in\mathcal{S}^\ast_s(\zeta)$, whenever $f\in\mathcal{W}_\beta^\delta(\alpha,\gamma)$. Further, using
these sufficient conditions, several interesting applications are studied for specific choices of $\lambda(t)$ are obtained in Section
$\ref{Sec-Gener:Starlike-Appln}$.

A closely related class $\mathcal{C}_\delta(\zeta)$ is defined as
\begin{align*}
\mathcal{C}_\delta(\zeta): = \left\{\dfrac{}{} f\in {\mathcal{A}}:
(z^{2-\delta}f^{\delta-1}f')\in\mathcal{S}^\ast(\xi)\right\},
\end{align*}
where $\xi:=1-\delta+\delta\zeta$ with the conditions
$1-\frac{1}{\delta}\leq\zeta<1$, $0\leq\xi<1$ and
$\delta\geq1$.
In \cite{DeviGenlConvex}, similar analysis for $V_\lambda^\delta(f)(z)\in\mathcal{C}_\delta(\zeta)$,
whenever $f\in\mathcal{W}_\beta^\delta(\alpha,\gamma)$ are given. Various other inclusion properties, in particular,
$V_\lambda^\delta(f)(z)\in\mathcal{W}_{\beta_1}^{\delta_1}(\alpha_1,\gamma_1)$, whenever $f\in\mathcal{W}_{\beta_2}^{\delta_2}(\alpha_2,\gamma_2)$
are given in \cite{DeviGenlUniv}.

\section{Preliminaries and Main results} \label{Sec-Gener:Starlike-Main:Result}

We need the following tools throughout this work.

The convolution or Hadamard product (denoted by `$\ast$'), of
two functions $f_1\!=\!(a_0+a_1z+a_2z^2+\ldots)$ and
$f_2\!=\!(b_0+b_1z+b_2z^2+\ldots)$ is given by

\begin{align*}
(f_1\ast f_2)(z)=\displaystyle\sum_{n=0}^{\infty}a_nb_n
z^n,\quad z\in\mathbb{D}.
\end{align*}

Further, let $c_i$ $(i=0,1, \ldots,p)$ and $d_j$
$(j=0,1,\ldots,q)$ are complex parameters with
$d_j\neq0,-1,\ldots$ and $p\leq q+1$. Then for
$z\in\mathbb{D}$, the function

\begin{align*}
_{p}F_{q}(c_1,\ldots,c_p;d_1,\ldots,d_q;z)
%&
%={\,}_{p}F_q\left(\!\!\!\!\!
%\begin{array}{cll}&\displaystyle c_1,\ldots,c_p
%\\
%&\displaystyle d_1,\ldots,d_q
%\end{array};z\right)\\
%&
=\sum_{n=0}^\infty
\dfrac{(c_1)_n\ldots,(c_p)_n}{(d_1)_n\ldots,(d_q)_nn!}z^n,
\end{align*}
is called generalized hypergeometric function, which can also
be represented as $_{p}F_{q}$. For $n\in {\mathbb{N}}$,
$(\varepsilon)_n$ is the Pochhammer symbol or shifted
factorial, which is defined as
$(\varepsilon)_n=\varepsilon(\varepsilon+1)_{n-1}$ and
$(\varepsilon)_0=1$. In particular, $_{2}F_{1}$ is the well known
Gaussian hypergeometric function.

The parameters $\mu,\,\nu\geq0$ introduced in \cite{AbeerS} are
used for further analysis that are defined by the following
relations
\begin{align}\label{eq-mu+nu}
\mu\nu=\gamma\quad  \text{and}\quad \mu+\nu=\alpha-\gamma.
\end{align}
Clearly $\eqref{eq-mu+nu}$ leads to two cases.
\begin{itemize}
\item[{\rm{(i)}}] $\gamma=0 \, \Longrightarrow \mu=0,\,
    \nu=\alpha \geq 0$.
\item[{\rm{(ii)}}] $\gamma>0 \, \Longrightarrow \mu>0,\,
    \nu>0$.
\end{itemize}
Define the auxiliary function

\begin{align}\label{eq-generalized:starlike-psi_munu}
\psi_{\mu,\nu}^\delta(z)
:=\sum_{n=0}^\infty\dfrac{\delta^2z^n}{(\delta+n\nu)(\delta+n\mu)}=\int_0^1\int_0^1\dfrac{1}{(1-u^{\nu/\delta/}v^{\mu/\delta}z)}dudv.
\end{align}
Hence

\begin{align}\label{eq-generalized:starlike-Phi_munu}
\Phi_{\mu,\nu}^\delta(z):=\left(z\psi_{\mu,\nu}^\delta(z)\right)'=\sum_{n=0}^\infty\dfrac{(n+1)\delta^2z^n}{(\delta+n\nu)(\delta+n\mu)}
=\dfrac{\delta^2}{\mu\nu}\int_0^1\int_0^1\dfrac{u^{\delta/\nu-1}v^{\delta/\mu-1}}{(1-uvz)^2}dudv.
\end{align}

Taking the case $\gamma=0$ $(\mu=0,\,\nu=\alpha\geq0)$, let
$g_{0,\alpha}^\delta(t)$ be the solution of the differential
equation

\begin{align}\label{eq-generalized:starlike-g:gamma0}
\dfrac{d}{dt}\left(t^{{\delta}/{\alpha}}\dfrac{\left(1+g_{0,\alpha}^\delta(t)\right)}{2}\right)
=\dfrac{\delta(1\!-\!\xi(1\!+\!t))}{\alpha(1\!-\!\xi)(1\!+\!t)^2}t^{{\delta}/{\alpha}-1},
\end{align}
with the initial condition $g_\alpha^\delta(0)=1$. By an easy
calculation, the solution of
\eqref{eq-generalized:starlike-g:gamma0} is given as

\begin{align*}
\dfrac{1+g_{0,\alpha}^\delta(t)}{2}
%=\dfrac{\delta}{\alpha}\int_0^1\dfrac{u^{\delta/\alpha-1}(1-\xi(1+tu))}{(1-\xi)(1+tu)^2}du\\
=\dfrac{\delta t^{-\delta/\alpha}}{\alpha}\int_0^t\dfrac{r^{\delta/\alpha-1}(1-\xi(1+r))}{(1-\xi)(1+r)^2}dr.
\end{align*}

For the case $\gamma>0$ $(\mu>0,\,\nu>0)$, let
$g_{\mu,\nu}^\delta(t)$ be the solution of the differential
equation

\begin{align}\label{eq-generalized:starlike-g:gamma>0}
\dfrac{d}{dt}\left(t^{\delta/\nu}\dfrac{\left(1+g_{\mu,\nu}^\delta(t)\right)}{2}\right)=
\dfrac{\delta^2t^{\delta/\nu-1}}{\mu\nu}\int_0^1\dfrac{1-\xi(1+st)}{(1-\xi)(1+st)^2}s^{\delta/\mu-1}ds,
\end{align}
with the initial condition $g_{\mu,\nu}^\delta(0)=1$. By an
easy calculation, the solution of
\eqref{eq-generalized:starlike-g:gamma>0} can be given as

\begin{align}\label{eq-generalized:starlike-integral:g:gamma>0}
\dfrac{1+g_{\mu,\nu}^\delta(t)}{2}
%&=\dfrac{\delta^2}{\mu\nu}\int_0^1\int_0^1\dfrac{1-\xi(1+tuv)}{(1-\xi)(1+tuv)^2}
%u^{\delta/\nu-1}v^{\delta/\mu-1}dudv\nonumber\\
=\int_0^1\int_0^1\dfrac{1-\xi(1+tr^{\nu/\delta}s^{\mu/\delta})}{(1-\xi)(1+tr^{\nu/\delta}s^{\mu/\delta})^2}drds.
\end{align}
Moreover for given $\lambda(t)$ and $\delta>0$, we introduce

\begin{align}\label{eq-weighted:Lambda}
\Lambda_\nu^\delta(t):=\displaystyle\int_t^1\dfrac{\lambda(s)}{s^{{\delta}/{\nu}}}ds,\quad \nu>0,
\end{align}
and

\begin{align}\label{eq-weighted:Pi}
\Pi_{\mu,\nu}^\delta(t):=
\left\{\!
\begin{array}{cll}\displaystyle \int_t^1\!\!\dfrac{\Lambda_\nu^\delta(s)}{s^{{\delta}/{\mu}-{\delta}/{\nu}+1}}ds
&\quad\gamma\!>\!0 \,(\mu\!>\!0,\nu\!>\!0),\\\\
\Lambda_\alpha^\delta(t)
&\quad\gamma\!=\!0\,(\mu\!=\!0,\nu\!=\!\alpha\!\geq\!0).
\end{array}\right.
\end{align}
These information, for $\delta=1$ coincide with the one given
in \cite{SarikaStarlike}.

Our main aim is to establish both necessary and sufficient
conditions that ensure $F_\delta(z)=V_{\lambda}^\delta(f)(z)\in
\mathcal{S}^\ast_s(\zeta)$, whenever
$f\in\mathcal{W}_{\beta}^\delta(\alpha,\gamma)$.
We state the conditions required for $F_\delta(z)=V_{\lambda}^\delta(\mathcal{W}_{\beta}^\delta(\alpha,\gamma))(z)$
to be in $\mathcal{S}^\ast_s(\zeta)$ and satisfy univalency in the following result and the proof of the
same is given in Section \ref{sec-Gener:Starlike-Main:Result-Proof}.

\begin{theorem}\label{Thm-Gener:Starlike-MainResult}
Let $\mu\!\geq\!0$, $\nu\!\geq\!0$ are defined in
\eqref{eq-mu+nu}, $\delta\geq1$ and
$\left(1-\frac{1}{\delta}\right)\leq\zeta\leq\left(1-\frac{1}{2\delta}\right)$.
Let $\beta\!<\!1$ satisfy

\begin{align}\label{Beta-Cond-Generalized:Starlike}
\dfrac{\beta}{(1-\beta)}=-\int_0^1 \lambda(t) g_{\mu,\nu}^\delta(t) dt,
\end{align}
where $g_{\mu,\nu}^\delta(t)$ is defined by the differential
equation \eqref{eq-generalized:starlike-g:gamma0} for
$\gamma=0$ and \eqref{eq-generalized:starlike-g:gamma>0} for
$\gamma>0$. Assume that

\begin{align*}
\lim_{t\rightarrow 0^+} t^{{\delta}/{\nu}}\Lambda_\nu^\delta(t)\rightarrow 0\quad{\rm and}\quad
\lim_{t\rightarrow 0^+}t^{{\delta}/{\mu}}\Pi_{\mu,\nu}^\delta(t)\rightarrow 0.
\end{align*}
Then for $f(z)\in\mathcal{W}_{\beta}^\delta(\alpha,\gamma)$,
the function $F_\delta\!\in\! \mathcal{S}^\ast_s(\zeta)$ or
$\left(z^{1-\delta}(F_\delta(z))^{\delta}\right)\in\mathcal{S}^\ast(\xi)$,
where $\xi=1-\delta+\delta\zeta$ and $0\leq\xi\leq1/2$ if, and
only if, $\mathcal{N}_{\Pi_{\mu,\nu}^\delta}(h_\xi)\geq 0$,
where

{\small{
\begin{align*}
\mathcal{N}_{\Pi_{\mu,\nu}^\delta}(h_\xi)(z):=\left\{\!\!\!\!
\begin{array}{cll}\displaystyle \int_0^1t^{{\delta}/{\mu}-1}\Pi_{\mu,\nu}^\delta(t) \left(\!{\rm
Re}\dfrac{h_\xi(tz)}{tz}\!-\!\dfrac{1-\xi(1+t)}{(1\!-\!\xi)(1\!+\!t)^2}\right)\!dt ,{\,}
&\gamma>0\,(\mu\!>\!0,\nu\!>\!0),\\\\
\displaystyle\!\!\int_0^1\!\! t^{{\delta}/{\alpha}-1}\Lambda_\alpha^\delta(t)\left(\!{\rm
Re}\dfrac{h_\xi(tz)}{tz}\!-\!\dfrac{1-\xi(1+t)}{(1\!-\!\xi)(1\!+\!t)^2}\right)\!dt,
&\gamma=0\,(\mu\!=\!0,\nu\!=\!\alpha\!\geq\!0),
\end{array}\right.
\end{align*}}}

\begin{align}\label{eq-h(z)-extremal-starlike}
{\rm and}\quad\quad h_\xi(z):=z\left(\dfrac{1+\frac{\epsilon+2\xi-1}{2(1-\xi)}z}{(1-z)^2}\right),\quad |\epsilon|=1.
\end{align}
The value of $\beta$ is sharp.
\end{theorem}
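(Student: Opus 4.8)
The plan is to run the Fournier--Ruscheweyh duality scheme, after first recasting membership in $\mathcal{W}_{\beta}^\delta(\alpha,\gamma)$ as a single convolution condition. Put $w:=(f/z)^\delta$ and $D:=z\,d/dz$. Writing $p:=zf'/f$ one has $p=1+(Dw)/(\delta w)$ and $1+zf''/f'=(Dp)/p+p$, and a direct computation then collapses the bracketed expression defining $\mathcal{W}_{\beta}^\delta(\alpha,\gamma)$ to
\[
\tfrac1{\delta^{2}}(\delta+\nu D)(\delta+\mu D)\,w=\Bigl(1+\tfrac{\nu}{\delta}D\Bigr)\Bigl(1+\tfrac{\mu}{\delta}D\Bigr)w,
\]
with $\mu,\nu$ from \eqref{eq-mu+nu} and the coefficient identities following from $\mu\nu=\gamma$, $\mu+\nu=\alpha-\gamma$ (for $\delta=1$ this is the familiar $(1-\alpha+2\gamma)(f/z)+(\alpha-2\gamma)f'+\gamma zf''$). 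Since $(\delta+\nu D)(\delta+\mu D)$ multiplies $z^{n}$ by $(\delta+n\nu)(\delta+n\mu)$, the series \eqref{eq-generalized:starlike-psi_munu} shows it is inverted by convolution with $\psi_{\mu,\nu}^\delta$; hence $f\in\mathcal{W}_{\beta}^\delta(\alpha,\gamma)$ iff $(f/z)^{\delta}=\psi_{\mu,\nu}^\delta\ast p$ for some $p$ analytic with $p(0)=1$ and $\operatorname{Re}e^{i\phi}(p(z)-\beta)>0$ on $\mathbb{D}$ for some $\phi\in\mathbb{R}$. By the (twisted) Herglotz representation of such $p$ together with convexity, $p$ ranges over the closed convex hull of an explicit extremal family carrying a parameter $\epsilon$, $|\epsilon|=1$, and one checks the governing extremal direction is $\epsilon=1$ (this is where $0\le\xi\le1/2$ is used).

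Next I would translate the conclusion. By \eqref{eq-gener:starlike-related:classes}, $F_\delta\in\mathcal{S}^\ast_s(\zeta)$ is equivalent to $H:=z^{1-\delta}F_\delta^{\delta}\in\mathcal{S}^\ast(\xi)$ with $\xi=1-\delta+\delta\zeta\in[0,1/2]$, and from \eqref{eq-weighted-integralOperator},
\[
H(z)=z\int_0^1\lambda(t)\,w(tz)\,dt=z\int_0^1\lambda(t)\,(\psi_{\mu,\nu}^\delta\ast p)(tz)\,dt,
\]
which, via the double--integral form of $\psi_{\mu,\nu}^\delta$ in \eqref{eq-generalized:starlike-psi_munu}, is $z$ times a convolution of $p$ with a kernel explicit in $\lambda,\mu,\nu,\delta$. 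I would then invoke Ruscheweyh's convolution characterization of $\mathcal{S}^\ast(\xi)$, valid for $\xi\in[0,1/2]$: $H\in\mathcal{S}^\ast(\xi)$ precisely when $\tfrac1z(H\ast k_{\epsilon})(z)\ne0$ on $\mathbb{D}$ for every $|\epsilon|=1$, where $k_{\epsilon}$ is the kernel attached to the function $h_\xi$ of \eqref{eq-h(z)-extremal-starlike}. Substituting the formula for $H$ turns this nonvanishing into an inequality of the shape $\operatorname{Re}\int_0^1\lambda(t)(\cdots)\,dt\ge0$; since this is a \emph{real} linear condition on $p$, it suffices to test it at the extreme points $p$, and the twist reduces matters to $\epsilon=1$.

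For an extreme $p$ the inner integrand is explicit, and the heart of the argument is a double integration by parts in $t$: first against $s^{-\delta/\nu}$, producing $\Lambda_\nu^\delta$ of \eqref{eq-weighted:Lambda}, then against $s^{-(\delta/\mu-\delta/\nu+1)}$, producing $\Pi_{\mu,\nu}^\delta$ of \eqref{eq-weighted:Pi} (when $\gamma=0$ only the first step occurs and $\Pi_{\mu,\nu}^\delta=\Lambda_\alpha^\delta$). The two standing hypotheses $\lim_{t\to0^+}t^{\delta/\nu}\Lambda_\nu^\delta(t)=0$ and $\lim_{t\to0^+}t^{\delta/\mu}\Pi_{\mu,\nu}^\delta(t)=0$ are exactly what makes the boundary terms in these integrations by parts vanish. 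After the reduction the inequality becomes $\mathcal{N}_{\Pi_{\mu,\nu}^\delta}(h_\xi)(z)\ge0$ \emph{provided} the remaining additive constant is fixed, and matching that constant forces $\beta$ to satisfy \eqref{Beta-Cond-Generalized:Starlike}: the functions $g_{\mu,\nu}^\delta$ of \eqref{eq-generalized:starlike-g:gamma0}--\eqref{eq-generalized:starlike-g:gamma>0} are precisely the images under the operator of the extremal boundary datum $t\mapsto(1-\xi(1+t))/((1-\xi)(1+t)^2)$, which equals $h_\xi(-t)/(-t)$ for $\epsilon=1$, so the defining ODEs are just the differentiated form of that transform. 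This gives the ``if'' direction. For ``only if'' and for sharpness I would use the extremal function $f^{\ast}$ determined by $(f^{\ast}/z)^{\delta}=\psi_{\mu,\nu}^\delta\ast\bigl(\beta+(1-\beta)\tfrac{1+z}{1-z}\bigr)$ (or its dilations, to stay inside $\mathbb{D}$), which lies in $\mathcal{W}_{\beta}^\delta(\alpha,\gamma)$ and realizes equality at every step: if $\mathcal{N}_{\Pi_{\mu,\nu}^\delta}(h_\xi)(z_0)<0$ for some $z_0$, then $\operatorname{Re}\bigl(\delta z (F_\delta^{\ast})'/F_\delta^{\ast}+1-\delta\bigr)<\xi$ there, so $F_\delta^{\ast}\notin\mathcal{S}^\ast_s(\zeta)$, and no $\beta'$ larger than the one in \eqref{Beta-Cond-Generalized:Starlike} can work since enlarging $\beta$ drives this already borderline $f^{\ast}$ out of the required image.

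The hardest part will be the two successive integrations by parts together with the bookkeeping of the iterated integrals concealed in $\psi_{\mu,\nu}^\delta$ and in the $k_{\epsilon}$--convolution: one must justify the interchange of the $t$--, $u$--, $v$--integrations and confirm that each boundary term vanishes under the stated limits, with the $\gamma>0$ case (both $\mu,\nu>0$) requiring the full two--step computation and the $\gamma=0$ case only the first. A secondary point needing care is the reduction to the single extremal direction $\epsilon=1$ (equivalently $\phi=0$), which rests on a monotonicity available only when $\xi\in[0,1/2]$; one should also record that $\delta\ge1$ together with $1-\tfrac1\delta\le\zeta\le1-\tfrac1{2\delta}$ places $\xi=1-\delta+\delta\zeta$ in $[0,1/2]$, and keep the harmless convention that $(f/z)^{\delta}$ denotes the branch equal to $1$ at the origin.
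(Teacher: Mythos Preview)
Your plan is essentially the paper's own proof: factor the differential condition as $(1+\tfrac{\nu}{\delta}D)(1+\tfrac{\mu}{\delta}D)(f/z)^\delta$, invert by convolution with $\psi_{\mu,\nu}^\delta$, reduce via duality to the extremal $p=(1+xz)/(1+yz)$, apply Ruscheweyh's convolution test $\tfrac1z(g\ast h_\xi)\ne0$ for $\mathcal{S}^\ast(\xi)$, and then perform two successive integrations by parts (using the two vanishing-limit hypotheses to kill the boundary terms and the defining ODE for $g_{\mu,\nu}^\delta$ to absorb the constant) to reach the $\Pi_{\mu,\nu}^\delta$ integral; the sharpness via the extremal $(f^\ast/z)^\delta=\psi_{\mu,\nu}^\delta\ast(\beta+(1-\beta)\tfrac{1+z}{1-z})$ is likewise the paper's choice.

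One clarification: there is no ``reduction to $\epsilon=1$'' in the equivalence. The duality principle converts the nonvanishing over all $|x|=|y|=1$ into a single $\operatorname{Re}(\cdot)>\tfrac12$ inequality, but the resulting criterion $\mathcal{N}_{\Pi_{\mu,\nu}^\delta}(h_\xi)\ge0$ must still hold for \emph{every} $|\epsilon|=1$ appearing in $h_\xi$; the restriction $0\le\xi\le\tfrac12$ enters only through the validity of Ruscheweyh's convolution characterization of $\mathcal{S}^\ast(\xi)$ (which you correctly cite), not through any extremal-direction selection. The paper also inserts, before the starlikeness argument, an unconditional Noshiro--Warschawski univalence step for $z^{1-\delta}F_\delta^\delta$ that you omit; it is not needed for the ``if and only if'' itself, but you may want to record it.
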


\begin{remark}
\begin{enumerate}[1.]
\item For $\delta=1$ and $\xi=0$, {\rm Theorem
    \ref{Thm-Gener:Starlike-MainResult}} gives the result
    of {\rm\cite[Theorem 3.1]{AbeerS}}.
\item For $\delta=1$, {\rm Theorem
    \ref{Thm-Gener:Starlike-MainResult}} reduces to
    {\rm\cite[Theorem 3.1]{SarikaStarlike}}{\rm (see also
    \cite[Theorem 2.1]{SuzeiniStarlike})}.
\item For $\gamma=0$, {\rm Theorem
    \ref{Thm-Gener:Starlike-MainResult}} provides the
    result of {\rm\cite[Theorem 2.1]{Aghalary}}.
\end{enumerate}
\end{remark}
The condition equivalent to
$\mathcal{N}_{\Pi_{\mu,\nu}^\delta}(h_\xi)\geq 0$ derived in
Theorem \ref{Thm-Gener:Starlike-MainResult} is provided in the
following result which is useful for further discussion.
\begin{theorem}\label{Thm-Gener:starlike-Decr}
Let $\gamma\geq0(\mu\!\geq\!0,\,\nu\!\geq\!0)$, $\delta\geq1$
and
$\left(1\!-\!\frac{1}{\delta}\right)\leq\zeta\leq\left(1\!-\!\frac{1}{2\delta}\right)$.
Assume that the functions $\Lambda_\nu^\delta(t)$ and
$\Pi_{\mu,\nu}^\delta(t)$, defined in
\eqref{eq-weighted:Lambda} and \eqref{eq-weighted:Pi},
respectively are positive on $t\in(0,1)$ and integrable on
$t\in[0,1]$. If $\beta<1$ satisfy
\eqref{Beta-Cond-Generalized:Starlike} and

\begin{align}\label{eq-Gener:starlike-decreas:cond}
\dfrac{t^{\delta/\mu-1}\Pi_{\mu,\nu}^\delta(t)}{(1+t)(1-t)^{3-2\delta(1-\zeta)}}
\end{align}
is decreasing on $t\in(0,1)$. Then for
$f(z)\in\mathcal{W}_\beta^\delta(\alpha,\gamma)$, the function
$F_\delta=V_{\lambda}^\delta(f)(z)\!\in\!
\mathcal{S}^\ast_s(\zeta)$ or
$z^{1-\delta}(F_\delta(z))^{\delta}\in\mathcal{S}^\ast(\xi)$,
where $\xi=1-\delta+\delta\zeta$ and $0\leq\xi\leq1/2$.
\end{theorem}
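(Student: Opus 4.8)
The plan is to deduce Theorem \ref{Thm-Gener:starlike-Decr} from Theorem \ref{Thm-Gener:Starlike-MainResult} by showing that the stated monotonicity of \eqref{eq-Gener:starlike-decreas:cond} forces the functional $\mathcal{N}_{\Pi_{\mu,\nu}^\delta}(h_\xi)$ to be nonnegative for the extremal functions $h_\xi$ in \eqref{eq-h(z)-extremal-starlike}. First I would record the explicit series (or closed form) of $h_\xi$ and compute ${\rm Re}\,(h_\xi(tz)/tz)$ for $|z|<1$; since $h_\xi(z)/z = (1 + c\,z)/(1-z)^2$ with $c = (\epsilon + 2\xi - 1)/(2(1-\xi))$ and $|\epsilon|=1$, the worst case over $\phi$ and over $|z|\to 1$ is the real boundary point, so it suffices to handle $z = -x$, $x\in(0,1)$ (the Koebe-type direction), reducing the claim to showing
\begin{align*}
\int_0^1 t^{\delta/\mu-1}\Pi_{\mu,\nu}^\delta(t)\left(\frac{1 - c\,t x \cdot (\text{sign terms})}{(1+tx)^2} - \frac{1-\xi(1+t)}{(1-\xi)(1+t)^2}\right)dt \ge 0
\end{align*}
for all $x\in(0,1)$, with equality as $x\to 1$. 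This is exactly the step where the structure of the problem collapses to a one-variable weighted-integral inequality.

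The main technical device will be the standard trick from the Fournier--Ruscheweyh circle of ideas: write the integrand difference as an integral of its $x$-derivative (or equivalently integrate by parts in $t$ against $\Pi_{\mu,\nu}^\delta$), so that nonnegativity of $\mathcal{N}_{\Pi_{\mu,\nu}^\delta}(h_\xi)$ becomes a consequence of a single monotonicity hypothesis on the weight. Concretely, after expressing the kernel $\big({\rm Re}\,h_\xi(tz)/(tz) - (1-\xi(1+t))/((1-\xi)(1+t)^2)\big)$ in the form $\frac{d}{dt}\big(t\,\Theta(t)\big)$ for a suitable auxiliary $\Theta$ that is positive and vanishes appropriately at the endpoints, integration by parts transfers the derivative onto $t^{\delta/\mu-1}\Pi_{\mu,\nu}^\delta(t)$ divided by the factor $(1+t)(1-t)^{3-2\delta(1-\zeta)}$ — which is precisely the quantity \eqref{eq-Gener:starlike-decreas:cond} assumed to be decreasing. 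The exponent $3 - 2\delta(1-\zeta) = 3 - 2(1-\xi) = 1 + 2\xi$ matches the pole order of $h_\xi(z)/z$ at $z=1$ after accounting for the $\xi$-shift, which is why that exact power appears; I would verify this bookkeeping carefully, since an off-by-one in the exponent would break the argument. The boundary terms from integration by parts are controlled by the two limit hypotheses $t^{\delta/\nu}\Lambda_\nu^\delta(t)\to 0$ and $t^{\delta/\mu}\Pi_{\mu,\nu}^\delta(t)\to 0$ inherited from Theorem \ref{Thm-Gener:Starlike-MainResult} together with positivity and integrability of $\Lambda_\nu^\delta,\Pi_{\mu,\nu}^\delta$ on $(0,1)$.

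I would organize the write-up in two cases, $\gamma>0$ and $\gamma=0$, mirroring the definition \eqref{eq-weighted:Pi} of $\Pi_{\mu,\nu}^\delta$; the $\gamma=0$ case is lighter because $\Pi_{\mu,\nu}^\delta = \Lambda_\alpha^\delta$ and one monotonicity condition suffices, while for $\gamma>0$ one uses the iterated-integral representation \eqref{eq-generalized:starlike-integral:g:gamma>0} of $g_{\mu,\nu}^\delta$ to keep $\beta$ tied to \eqref{Beta-Cond-Generalized:Starlike}. In both cases the final inequality $\mathcal{N}_{\Pi_{\mu,\nu}^\delta}(h_\xi)\ge 0$ follows by pairing a decreasing function with a function of one sign change (a Chebyshev/monotone-rearrangement type estimate), after which Theorem \ref{Thm-Gener:Starlike-MainResult} gives $F_\delta\in\mathcal{S}^\ast_s(\zeta)$ immediately. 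The hard part will be the reduction in the first paragraph — identifying the correct auxiliary function $\Theta$ so that the kernel is an exact derivative and the residual weight is exactly \eqref{eq-Gener:starlike-decreas:cond}; once that algebraic identity is in hand, the rest is a routine sign chase. I expect the constraints $\delta\ge 1$ and $(1-\tfrac{1}{\delta})\le\zeta\le(1-\tfrac{1}{2\delta})$ (equivalently $0\le\xi\le\tfrac12$) to enter precisely in guaranteeing $3 - 2\delta(1-\zeta)\ge 0$ and that the relevant kernel has a single sign change, so I would flag where each is used.
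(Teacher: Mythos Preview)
Your overall strategy matches the paper's: reduce Theorem~\ref{Thm-Gener:starlike-Decr} to the nonnegativity of $\mathcal{N}_{\Pi_{\mu,\nu}^\delta}(h_\xi)$ and then invoke Theorem~\ref{Thm-Gener:Starlike-MainResult}, with the key observation that the exponent in \eqref{eq-Gener:starlike-decreas:cond} satisfies $3-2\delta(1-\zeta)=1+2\xi$ so that the monotonicity hypothesis is precisely about $M(t)/\bigl((1+t)(1-t)^{1+2\xi}\bigr)$ for $M(t)=t^{\delta/\mu-1}\Pi_{\mu,\nu}^\delta(t)$.

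The difference is one of economy. The paper does \emph{not} reprove the implication ``$M(t)/\bigl((1+t)(1-t)^{1+2\xi}\bigr)$ decreasing $\Rightarrow$ $\int_0^1 M(t)\bigl(\mathrm{Re}\,h_\xi(tz)/(tz)-(1-\xi(1+t))/((1-\xi)(1+t)^2)\bigr)\,dt\ge 0$''; it simply cites this as an established lemma from \cite{SarikaStarlike,SuzeiniStarlike} (and \cite{AbeerS,FourRusExtremal} for $\xi=0$), plugs in the particular $M(t)$, and is done in a few lines. Your proposal instead sketches a self-contained proof of that lemma via the Fournier--Ruscheweyh mechanism (reduction to the real boundary, writing the kernel as an exact derivative, integration by parts, and a one-sign-change argument). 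That is a legitimate and more transparent route, and your identification of where the constraint $0\le\xi\le\tfrac12$ enters is useful, but it is considerably more work than what the paper actually does. If you are writing this up in the context of the present paper, the citation suffices; if you want a self-contained account, your outline is the right one, though the ``exact derivative'' identity for the $h_\xi$-kernel and the handling of the full range $|\epsilon|=1$ (not just the Koebe direction) need to be written out carefully rather than asserted.
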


\begin{proof}
For $t\in(0,1)$, the mapping  $t\rightarrow M(t)$ satisfies the
condition that $M(t)$ is a positive function which decreases
with respect to $t$ and fits into the requirement leading to

\begin{align*}
{\rm Re}\int_0^1M(t)\left(
\dfrac{h_\xi(tz)}{tz}-\dfrac{1-\xi(1+t)}{(1-\xi)(1+t)^2}\right)dt\geq0
\end{align*}
if, and only if,
$F_1(z)=V_\lambda^1(f)(z)\in\mathcal{S}^\ast_s(\zeta)$, where
$0\leq\zeta\leq1/2$ or $z^{1-\delta}(F_\delta(z))^\delta\!\in\!
\mathcal{S}^\ast(\xi)$, where $\xi=1-\delta+\delta\zeta$,
$\delta\geq1$ and $0\leq\xi\leq1/2$, then it is enough to
verify that

\begin{align*}
\dfrac{M(t)}{(1+t)(1-t)^{1+2\xi}}
\end{align*}
is decreasing on $(0,1)$. This is given already in
\cite{SarikaStarlike, SuzeiniStarlike}. Further for $\xi=0$ the
same is proved in \cite{AbeerS, FourRusExtremal}. Hence, if we
take $M(t)=t^{\delta/\mu-1}\Pi_{\mu,\nu}^\delta(t)$, then we
see that \eqref{eq-Gener:starlike-decreas:cond} satisfies the above
observation and hence we get

\begin{align*}
{\rm Re}\int_0^1 t^{\delta/\mu-1}\Pi_{\mu,\nu}^\delta(t)\left(
\dfrac{h_\xi(tz)}{tz}-\dfrac{1-\xi(1+t)}{(1-\xi)(1+t)^2}\right)dt\geq0.
\end{align*}
This proves the required result.
\end{proof}
\begin{remark}
The condition \eqref{eq-Gener:starlike-decreas:cond} obtained in Theorem
\ref{Thm-Gener:starlike-Decr} cannot be reduced to the result
obtained by A. Ebadian et al. \cite[Theorem 2.2]{Aghalary} for
the case $\gamma=0$. This is because the condition given in
\eqref{eq-Gener:starlike-decreas:cond} contains the function
$(1+t)(1-t)^{1+2\xi}$ whereas in \cite{Aghalary}, the result
contains the function $\left(\log(1/t)\right)^{1+2\xi}$ in its
denominator part. Both the functions are decreasing and tends
to $0$ as $t\rightarrow1$.
\end{remark}
\section{Sufficient criterion of Theorem \ref{Thm-Gener:starlike-Decr}
}\label{Sec-Gener:Starlike-Suff:Cond}

In this section, the conditions are determined which ensures
the sufficiency of Theorem \ref{Thm-Gener:starlike-Decr} by a
simpler method, so that the weighted integral operator
$V_\lambda^\delta(\mathcal{W}_\beta^\delta(\alpha,\gamma))\!\subset\!\mathcal{S}^\ast_s(\zeta)$,
with
$\left(1\!-\!\frac{1}{\delta}\right)\leq\zeta\leq\left(1\!-\!\frac{1}{2\delta}\right)$
and $\delta\geq1$.
The conditions comprise of the following two cases.

\textbf{Case (i)}. $\gamma>0$ ($\mu>0$, $\nu>0$). In accordance
of Theorem \ref{Thm-Gener:starlike-Decr}, the equivalent
condition are obtained for the function

\begin{align*}
\dfrac{t^{\delta/\mu-1}\Pi_{\mu,\nu}^\delta(t)}{(1+t)(1-t)^{1+2\xi}},
\end{align*}
which decreases in the range $t\in(0,1)$, where
$\Pi_{\mu,\nu}^\delta(t)$ is defined in \eqref{eq-weighted:Pi},
$\xi=1-\delta+\delta\zeta$, $\delta\geq1$ and $\xi\in[0,1/2]$.

Let

\begin{align*}
k(t):=\dfrac{l(t)}{(1+t)(1-t)^{1+2\xi}}, \quad {\mbox{where}}\quad l(t):=t^{\delta/\mu-1}\Pi_{\mu,\nu}^\delta(t),
\end{align*}
then taking logarithmic derivative of $k(t)$ will give

\begin{align*}
\dfrac{k'(t)}{k(t)}=\dfrac{l'(t)}{l(t)}+\dfrac{2(t+\xi+\xi t)}{(1-t^2)}.
\end{align*}
For $t\in(0,1)$, it is easy to note that $k(t)\geq0$. Thus to
prove that $k(t)$ is decreasing function of $t\in(0,1)$ is
equivalent of getting

\begin{align*}
p(t):=l(t)+\dfrac{(1-t^2)l'(t)}{2(t+\xi(1+t))}\leq0.
\end{align*}
Clearly $p(1)=0$ means that if $p(t)$ is increasing function of
$t\in(0,1)$ then $k'(t)\leq0$ and the proof is complete. Thus
it is enough to show that $p'(t)\geq0$, where

\begin{align*}
p'(t)=\dfrac{(1+t)}{2(t+\xi(1+t))^2}\left(\dfrac{}{}(-1-\xi+2\xi^2+t+3\xi t+2\xi^2t)l'(t)+(1-t)(\xi+t+\xi t)l''(t)\right).
\end{align*}
Differentiating $l(t)$ with respect $t$ gives

\begin{align*}
l'(t)&=\left(\dfrac{\delta}{\mu}-1\right)t^{\delta/\mu-2}\Pi_{\mu,\nu}^\delta(t)-t^{\delta/\nu-2}\Lambda_\nu^\delta(t)\quad\quad{\rm and}\\
l''(t)\!&=\!\left(\dfrac{\delta}{\mu}\!-\!1\right)\!\!\!\left(\dfrac{\delta}{\mu}\!-\!2\right)\!
t^{\frac{\delta}{\mu}-3}\Pi_{\mu,\nu}^\delta(t)\!-\!
\left[\!\left(\dfrac{\delta}{\mu}\!+\!\dfrac{\delta}{\nu}\!-\!3\right)\!\right]\!
t^{\frac{\delta}{\nu}-3}\Lambda_\nu^\delta(t)\!+\!t^{-2}\lambda(t).
\end{align*}
It is easy to see that the terms $(1+t)$ and $2(t+\xi(1+t))^2$
in the function $p'(t)$ are positive for $t\in(0,1)$ and
$\xi\in[0,1/2]$. Now it only remains to show that

\begin{align*}
q(t)=\dfrac{Y(t)}{t}l'(t)+X(t)l''(t)\geq0,
\end{align*}
where

\begin{align}\label{eq-gener:starlike-XY}
X(t):=(1\!-\!t)(t\!+\!\xi(1\!+\!t))\quad{\rm and}\quad Y(t):=-(1\!+\!2\xi)t(1\!-\!t\!-\!\xi(1\!+\!t)).
\end{align}
For $t=1$, the function $q(t)=0$. Therefore, if $q(t)$ is
decreasing function of $t\in(0,1)$ directly implies
$k'(t)\leq0$. Differentiating $q(t)$ with respect to $t$ will
give

{\small{
\begin{align*}
q'(t)=&\left(\!\frac{\delta}{\mu}\!-\!1\!\right)\!t^{{\delta}/{\mu}-4}\Pi_{\mu,\nu}^\delta(t)
\left(\left[tY'(t)+\left(\frac{\delta}{\mu}-3\!\right)Y(t)\right]\!
+\left(\frac{\delta}{\mu}\!-\!2\right)\left[tX'(t)\!+\!\left(\frac{\delta}{\mu}\!-\!3\right)X(t)\right]\right)\\
&-t^{{\delta}/{\nu}-4}\Lambda_\nu^\delta(t)\left(\left(\frac{\delta}{\mu}\!-\!1\right)
\left[Y(t)\!+\!\left(\frac{\delta}{\mu}\!-\!2\right)X(t)\right]\!+\!tY'(t)
\!+\!\left(\frac{\delta}{\mu}\!+\!\dfrac{\delta}{\nu}\!-\!3\right)tX'(t)\right.\\
&\left.+\left(\dfrac{\delta}{\nu}\!-\!3\right)\left[Y(t)\!+\!\left(\dfrac{\delta}{\mu}\!+\!\dfrac{\delta}{\nu}\!-\!3\right)X(t)\right]\right)
+t^{-3}\lambda(t)\left(Y(t)\!+\!\left(\dfrac{\delta}{\mu}\!+\!\dfrac{\delta}{\nu}\!-\!5\right)X(t)\!+\!tX'(t)\right)\\
&+t^{-2}X(t)\lambda'(t).
\end{align*}}}
For $t\in(0,1)$, the function $q'(t)\leq0$ is counterpart of
the following inequalities

\begin{align}\label{eq-Gener:Starlike-suff1(gamma>0)}
\left(\!\frac{\delta}{\mu}\!-\!1\!\right)
\left(\left[tY'(t)+\left(\frac{\delta}{\mu}-3\!\right)Y(t)\right]\!
+\left(\frac{\delta}{\mu}\!-\!2\right)\left[tX'(t)\!+\!\left(\frac{\delta}{\mu}\!-\!3\right)X(t)\right]\right)\leq0,
\end{align}
\begin{align}\label{eq-Gener:Starlike-suff2(gamma>0)}
\left(\frac{\delta}{\mu}-1\right)&\left[Y(t)+\left(\frac{\delta}{\mu}-2\right)X(t)\right]+\left(\dfrac{\delta}{\nu}-3\right)Y(t)+tY'(t)\nonumber\\
&+\left(\frac{\delta}{\mu}+\dfrac{\delta}{\nu}-3\right)\left[\left(\dfrac{\delta}{\nu}-3\right)
X(t)+tX'(t)\right]\geq0
\end{align}
\begin{align*}
{\rm and}\quad\quad\lambda(t)\left(Y(t)\!+\!\left(\dfrac{\delta}{\mu}\!+\!\dfrac{\delta}{\nu}\!-\!5\right)X(t)\!+\!tX'(t)\right)+tX(t)\lambda'(t)\leq0.
\end{align*}
Letting $1\leq\delta\leq\mu$ and $1\leq\delta\leq\nu$, directly
implies that the inequalities
\eqref{eq-Gener:Starlike-suff1(gamma>0)} and
\eqref{eq-Gener:Starlike-suff2(gamma>0)} are positive, which
clearly means that the function $k(t)$ is decreasing on
$t\in(0,1)$ is equivalent of getting

\begin{align}\label{eq-Gener:Starlike-Main:ineq1}
\dfrac{t\lambda'(t)}{\lambda(t)}\leq5-\dfrac{\delta}{\mu}-\dfrac{\delta}{\nu}-\dfrac{(tX'(t)+Y(t))}{X(t)},
\end{align}
where $1\leq\delta\leq\mu$, $1\leq\delta\leq\nu,$ and
$\xi\in[0,1/2]$. Using \eqref{eq-gener:starlike-XY}, an easy
calculation gives $tX'(t)+Y(t)\leq0$, which clearly means that
the inequality \eqref{eq-Gener:Starlike-Main:ineq1} is true
when

\begin{align*}
\dfrac{t\lambda'(t)}{\lambda(t)}\leq5-\dfrac{\delta}{\mu}-\dfrac{\delta}{\nu},\quad1\leq\delta\leq\mu\quad{\rm and}\quad 1\leq\delta\leq\nu.
\end{align*}
Summarizing these conditions, the general result for the case $\gamma>0$ is stated as follows.
\begin{theorem}\label{Thm-gamma>0-Generalized-Starlike}
Let $\beta<1$ satisfy \eqref{Beta-Cond-Generalized:Starlike}
and let $\lambda(t)$ be real-valued, non-negative and
integrable function for $t\in(0,1)$. Further assume that the
functions $\Lambda_\nu^\delta(t)$ and $\Pi_{\mu,\nu}^\delta(t)$
defined in \eqref{eq-weighted:Lambda} and
\eqref{eq-weighted:Pi}, respectively are positive on $(0,1)$
and integrable on $[0,1]$. Then for
$f(z)\in\mathcal{W}_\beta^\delta(\alpha,\gamma)$, the function
$F_\delta=V_\lambda^\delta(f)(z)$ belongs to the class
$\mathcal{S}^\ast_s(\zeta)$ with
$\left(1\!-\!\frac{1}{\delta}\right)\leq\zeta\leq\left(1\!-\!\frac{1}{2\delta}\right)$
or $z^{1-\delta}(F_\delta(z))^\delta\in\mathcal{S}^\ast(\xi)$,
where $\xi=1-\delta+\delta\zeta$, $\xi\in[0,1/2]$,
$\delta\geq1$ and $\gamma>0$ whenever

\begin{align}\label{eq-Gener:Starlike-Main:Cond-gamma>0}
\dfrac{t\lambda'(t)}{\lambda(t)}\leq5-\dfrac{\delta}{\mu}-\dfrac{\delta}{\nu}.
\end{align}
\end{theorem}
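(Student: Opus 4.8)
The plan is to derive the result from Theorem~\ref{Thm-Gener:starlike-Decr}: by that theorem it suffices to show that the quotient $t^{\delta/\mu-1}\Pi_{\mu,\nu}^\delta(t)\big/\bigl((1+t)(1-t)^{3-2\delta(1-\zeta)}\bigr)$ decreases on $(0,1)$. First one records the identity $3-2\delta(1-\zeta)=1+2\xi$ for $\xi=1-\delta+\delta\zeta$, so that, writing $l(t):=t^{\delta/\mu-1}\Pi_{\mu,\nu}^\delta(t)$, this quotient is exactly $k(t):=l(t)/\bigl((1+t)(1-t)^{1+2\xi}\bigr)$. Since $\Pi_{\mu,\nu}^\delta>0$ on $(0,1)$ by hypothesis, $k$ is positive there and its monotonicity is governed by $k'/k=l'/l+2\bigl(t+\xi(1+t)\bigr)/(1-t^2)$; hence $k$ is decreasing if and only if $p(t):=l(t)+(1-t^2)l'(t)\big/\bigl(2(t+\xi(1+t))\bigr)\le 0$ on $(0,1)$.

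From the definitions \eqref{eq-weighted:Lambda} and \eqref{eq-weighted:Pi} one has $\Pi_{\mu,\nu}^\delta(1)=\Lambda_\nu^\delta(1)=0$, whence $l(1)=l'(1)=0$ and therefore $p(1)=0$. So it is enough to prove $p$ is increasing, i.e.\ $p'(t)\ge0$. Differentiating and factoring out $(1+t)\big/\bigl(2(t+\xi(1+t))^2\bigr)$, which is positive on $(0,1)$ for $\xi\in[0,1/2]$, the task becomes $q(t)\ge0$, where $q(t):=\tfrac{Y(t)}{t}l'(t)+X(t)l''(t)$ with $X,Y$ the quadratics of \eqref{eq-gener:starlike-XY}. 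Again $X(1)=0$ and $l'(1)=0$ yield $q(1)=0$, so it suffices to show $q$ is decreasing, that is $q'(t)\le0$ on $(0,1)$.

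Now I substitute into $q'$ the closed forms of $l'$ and $l''$ obtained by differentiating $l(t)=t^{\delta/\mu-1}\Pi_{\mu,\nu}^\delta(t)$ together with the defining relations for $\Pi_{\mu,\nu}^\delta$ and $\Lambda_\nu^\delta$. This expresses $q'(t)$ as a combination of the four quantities $t^{\delta/\mu-4}\Pi_{\mu,\nu}^\delta(t)$, $t^{\delta/\nu-4}\Lambda_\nu^\delta(t)$, $t^{-3}\lambda(t)$ and $t^{-2}X(t)\lambda'(t)$, the first three nonnegative on $(0,1)$ by hypothesis, with explicit polynomial-in-$t$ coefficients built from $X,Y,X',Y'$ and the parameters $\delta/\mu,\delta/\nu,\xi$. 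Consequently $q'(t)\le0$ is guaranteed once (a) \eqref{eq-Gener:Starlike-suff1(gamma>0)} holds, forcing the $\Pi$-term $\le0$; (b) \eqref{eq-Gener:Starlike-suff2(gamma>0)} holds, forcing the $\Lambda$-term $\le0$; and (c) the remaining $\lambda$- and $\lambda'$-contributions are $\le0$. Under $1\le\delta\le\mu$, $1\le\delta\le\nu$ and $\xi\in[0,1/2]$ one checks (a) and (b) by a direct sign inspection of $X,Y$ and their derivatives on $(0,1)$; for (c), dividing through by $\lambda(t)>0$ recasts it as \eqref{eq-Gener:Starlike-Main:ineq1}, and since a short computation from \eqref{eq-gener:starlike-XY} gives $tX'(t)+Y(t)\le0$ on $(0,1)$, the right-hand side of \eqref{eq-Gener:Starlike-Main:ineq1} is at least $5-\delta/\mu-\delta/\nu$, so the hypothesis \eqref{eq-Gener:Starlike-Main:Cond-gamma>0} suffices. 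Chaining (a)--(c) back through $q'\le0\Rightarrow q\ge0\Rightarrow p'\ge0\Rightarrow p\le0\Rightarrow k$ decreasing, Theorem~\ref{Thm-Gener:starlike-Decr} then yields $F_\delta\in\mathcal{S}^\ast_s(\zeta)$ for the $\beta$ fixed by \eqref{Beta-Cond-Generalized:Starlike}.

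The main obstacle is precisely items (a), (b) together with the auxiliary bound $tX'(t)+Y(t)\le0$: these amount to verifying explicit cubic-in-$t$ inequalities uniformly over all admissible values of $\delta/\mu$, $\delta/\nu$ and $\xi$, which is elementary but bookkeeping-heavy. The conceptual content lies in the three-step reduction from $k$ to $p$ to $q$, each step trading a monotonicity assertion for the sign of a derivative by exploiting the boundary zeros at $t=1$ that are forced by $\Pi_{\mu,\nu}^\delta(1)=\Lambda_\nu^\delta(1)=0$.
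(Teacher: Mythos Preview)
Your proposal is correct and follows essentially the same route as the paper: both reduce to Theorem~\ref{Thm-Gener:starlike-Decr} via the three-step chain $k\to p\to q$ (exploiting the boundary zeros at $t=1$), split $q'$ into the $\Pi$-, $\Lambda$-, and $\lambda$/$\lambda'$-contributions, verify \eqref{eq-Gener:Starlike-suff1(gamma>0)}--\eqref{eq-Gener:Starlike-suff2(gamma>0)} under $1\le\delta\le\min\{\mu,\nu\}$, and then use $tX'(t)+Y(t)\le0$ to reduce \eqref{eq-Gener:Starlike-Main:ineq1} to the hypothesis \eqref{eq-Gener:Starlike-Main:Cond-gamma>0}. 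Your writeup is in fact more careful than the paper's in justifying $p(1)=q(1)=0$ via $\Pi_{\mu,\nu}^\delta(1)=\Lambda_\nu^\delta(1)=0$ and hence $l(1)=l'(1)=0$.
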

{\textbf{Case (ii)}}. Let $\gamma=0$ ($\mu=0$,
$\nu=\alpha\geq0$). In accordance of Theorem
\ref{Thm-Gener:starlike-Decr} the equivalent condition are
obtained for the function

\begin{align}\label{eq-condition-gamma=0}
a(t):=\dfrac{t^{{\delta}/{\alpha}-1}\Lambda_\alpha^\delta(t)}{(1+t)(1-t)^{1+2\xi}}=\dfrac{b(t)}{(1+t)(1-t)^{1+2\xi}},
\end{align}
which decreases in the range $t\in(0,1)$, where
$\xi=(1-\delta+\delta\zeta)$, $\xi\in[0,1/2]$, $\delta\geq1$
and $\Lambda_{\alpha}^\delta(t)$ is defined in
\eqref{eq-weighted:Lambda}.
\newline
For $\gamma=0$, the subsequent two subcases are discussed for
$\xi$.

At first, consider $\xi=0$, then the function $a(t)$
corresponding to \eqref{eq-condition-gamma=0} is given as

\begin{align*}
a(t):=\dfrac{t^{{\delta}/{\alpha}-1}\Lambda_\alpha^\delta(t)}{(1-t^2)}.
\end{align*}
Now, taking the logarithmic derivative of $a(t)$ will give

\begin{align*}
\dfrac{a'(t)}{a(t)}=\dfrac{2t}{(1-t^2)b(t)}\left(b(t)+\dfrac{(1-t^2)b'(t)}{2t}\right).
\end{align*}
Thus to show that $a(t)$ is decreasing function of $t\in(0,1)$
is equivalent of proving $c(t):=b(t)+{(1-t^2)b'(t)}/{2t}\leq0$.
For $t=1$, $c(t)=0$ which clearly implies that if $c'(t)\geq0$
then the function $a(t)$ decreases and the proof is complete.

Now differentiating $c(t)$ gives

\begin{align*}
c'(t)=b'(t)+\dfrac{t(1-t^2)b''(t)-(1+t^2)b'(t)}{2t^2}=\dfrac{(1-t^2)}{2t^2}\left(tb''(t)-b'(t)\right),
\end{align*}
where

\begin{align}\label{eq-gamma=0=b'}
b'(t)=\left(\dfrac{\delta}{\alpha}-1\right)t^{\delta/\alpha-2}\Lambda_\alpha^\delta(t)-t^{-1}\lambda(t)\quad\quad{\rm and}
\end{align}
\begin{align}\label{eq-gamma=0=b''}
b''(t)=\left(\dfrac{\delta}{\alpha}-1\right)\left(\dfrac{\delta}{\alpha}-2\right)t^{\delta/\alpha-3}\Lambda_\alpha^\delta(t)
-\left(\dfrac{\delta}{\alpha}-2\right)t^{-2}\lambda(t)-t^{-1}\lambda'(t).
\end{align}
Thus $c'(t)\geq0$ is equivalent of obtaining

\begin{align*}
\left(\dfrac{\delta}{\alpha}-1\right)\left(\dfrac{\delta}{\alpha}-3\right)\geq0\quad{\rm and}\quad
\dfrac{t\lambda'(t)}{\lambda(t)}\leq\left(3-\dfrac{\delta}{\alpha}\right).
\end{align*}
Thus for $\gamma\geq0$ and $\xi=0$, the following result can be stated.
\begin{theorem}\label{Thm-Gener:Starlike-gamma0:xi0}
Let $\beta<1$ satisfy \eqref{Beta-Cond-Generalized:Starlike}
and let $\lambda(t)$ be real-valued, non-negative and
integrable function for $t\in(0,1)$. Further assume that the
functions $\Lambda_\nu^\delta(t)$ and $\Pi_{\mu,\nu}^\delta(t)$
defined in \eqref{eq-weighted:Lambda} and
\eqref{eq-weighted:Pi}, respectively are positive on $(0,1)$
and integrable on $[0,1]$. Then for
$f(z)\in\mathcal{W}_\beta^\delta(\alpha,\gamma)$, the function
$F_\delta=V_\lambda^\delta(f)(z)$ belongs to the class
$\mathcal{S}^\ast\left(1-\frac{1}{\delta}\right)$ or
$z^{1-\delta}(F_\delta(z))^\delta\in\mathcal{S}^\ast$,
$\delta\geq1$, whenever

\begin{align*}
\dfrac{t\lambda'(t)}{\lambda(t)}\leq
\left\{
  \begin{array}{ll}
    5-\dfrac{\delta}{\mu}-\dfrac{\delta}{\nu}, & \quad\gamma>0(\mu>0,\nu>0)\,\,{\rm and}\,\,1\leq\delta\leq\min\{\mu,\nu\}, \\   \\
    \quad3-\dfrac{\delta}{\alpha}, & \quad\gamma=0{\,\,}\left(\mu=0,{\,}\nu=\alpha\in\left(0,{\delta}/{3}\right]\cup\left[\delta,\infty\right)\right).
  \end{array}
\right.
\end{align*}
\end{theorem}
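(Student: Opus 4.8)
The plan is to reduce the claim, in both cases $\gamma>0$ and $\gamma=0$, to the monotonicity criterion of Theorem~\ref{Thm-Gener:starlike-Decr}, specialized to $\xi=0$ — equivalently to $\zeta=1-1/\delta$, since $\xi=1-\delta+\delta\zeta$. First I would record that $\zeta=1-1/\delta$ forces $\xi=0\in[0,1/2]$, so the hypotheses on $\xi$ required by Theorems~\ref{Thm-Gener:starlike-Decr} and~\ref{Thm-gamma>0-Generalized-Starlike} are automatically in force, and that the standing assumptions on $\beta$, $\lambda$, $\Lambda_\nu^\delta$ and $\Pi_{\mu,\nu}^\delta$ are precisely those needed below.

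For $\gamma>0$ ($\mu>0$, $\nu>0$) I would simply invoke Theorem~\ref{Thm-gamma>0-Generalized-Starlike} with $\zeta=1-1/\delta$: the sufficient condition \eqref{eq-Gener:Starlike-Main:Cond-gamma>0}, that is $t\lambda'(t)/\lambda(t)\le5-\delta/\mu-\delta/\nu$ together with $1\le\delta\le\min\{\mu,\nu\}$, already yields $F_\delta\in\mathcal{S}^\ast_s(1-1/\delta)$, equivalently $z^{1-\delta}(F_\delta(z))^\delta\in\mathcal{S}^\ast$. So in this case nothing new is required beyond recording the specialization.

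For $\gamma=0$ ($\mu=0$, $\nu=\alpha$) I would carry out the computation already set up in the paragraphs preceding the statement. With $b(t):=t^{\delta/\alpha-1}\Lambda_\alpha^\delta(t)$, the function whose monotonicity Theorem~\ref{Thm-Gener:starlike-Decr} demands for $\xi=0$ is $a(t)=b(t)/(1-t^2)$. Since $\Lambda_\alpha^\delta$ is positive on $(0,1)$ and vanishes at $t=1$, one has $b(t)>0$ on $(0,1)$ and $b(1)=0$; hence $c(t):=b(t)+(1-t^2)b'(t)/(2t)$ satisfies $c(1)=0$, and it suffices to prove $c'(t)\ge0$ on $(0,1)$. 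Because $c'(t)=\frac{1-t^2}{2t^2}\bigl(tb''(t)-b'(t)\bigr)$, this reduces to $tb''(t)-b'(t)\ge0$. Inserting \eqref{eq-gamma=0=b'} and \eqref{eq-gamma=0=b''}, the combination $tb''(t)-b'(t)$ collapses — via the cancellations $(\delta/\alpha-1)(\delta/\alpha-2)-(\delta/\alpha-1)=(\delta/\alpha-1)(\delta/\alpha-3)$ and $-(\delta/\alpha-2)+1=3-\delta/\alpha$ — to $(\delta/\alpha-1)(\delta/\alpha-3)\,t^{\delta/\alpha-2}\Lambda_\alpha^\delta(t)+(3-\delta/\alpha)\,t^{-1}\lambda(t)-\lambda'(t)$. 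As $\Lambda_\alpha^\delta(t)>0$ and $\lambda(t)\ge0$ on $(0,1)$, this is nonnegative as soon as $(\delta/\alpha-1)(\delta/\alpha-3)\ge0$ — which, solving the quadratic inequality in $\delta/\alpha$, is exactly $\alpha\in(0,\delta/3]\cup[\delta,\infty)$ — and $t\lambda'(t)/\lambda(t)\le3-\delta/\alpha$. Theorem~\ref{Thm-Gener:starlike-Decr} then delivers $F_\delta\in\mathcal{S}^\ast_s(1-1/\delta)$, equivalently $z^{1-\delta}(F_\delta(z))^\delta\in\mathcal{S}^\ast$, which completes this case.

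The step requiring the most care is the algebraic collapse of $tb''(t)-b'(t)$ into the displayed three-term expression: the whole theorem turns on those two cancellations producing precisely the factor $(\delta/\alpha-1)(\delta/\alpha-3)$ — whence the admissible range of $\alpha$ — and the coefficient $3-\delta/\alpha$ — whence the bound on $t\lambda'/\lambda$. Everything else (the passage to monotonicity of $a(t)$, the sign of $b(t)$, and the appeals to Theorems~\ref{Thm-Gener:starlike-Decr} and~\ref{Thm-gamma>0-Generalized-Starlike}) is routine bookkeeping.
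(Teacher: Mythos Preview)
Your proposal is correct and follows essentially the same route as the paper: the $\gamma>0$ case is precisely the specialization of Theorem~\ref{Thm-gamma>0-Generalized-Starlike} to $\xi=0$, and your $\gamma=0$ computation reproduces the analysis the paper gives just before stating the theorem, with the added virtue that you make the collapse of $tb''(t)-b'(t)$ explicit where the paper only records the outcome.
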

\begin{remark}
For $\delta=1$, {\rm Theorem
\ref{Thm-Gener:Starlike-gamma0:xi0}} provides better result for
the case $\gamma>0$ and similar result for the case $\gamma=0$
when compared to {\rm\cite[Theorem 4.2]{AbeerS}} and
{\rm\cite[Theorem 4.2]{SarikaStarlike}} {\rm(see also
\cite[Theorem 2.3]{SuzeiniStarlike})}. This is because, for the
case $\delta=1$ and $\gamma>0$

\begin{align*}
\dfrac{t\lambda'(t)}{\lambda(t)}\leq 5-\dfrac{1}{\mu}-\dfrac{1}{\nu},
\end{align*}
which gives $3$ as the least value of right side term of the
above expression. But in {\rm\cite[Theorem 4.2]{AbeerS}}, the
bound is $1+\frac{1}{\mu}$, where $\mu\geq1$, which is clearly
less than or equal to 2.
\end{remark}

\section{Applications}\label{Sec-Gener:Starlike-Appln}
In this section, using the conditions derived in Section $\ref{Sec-Gener:Starlike-Suff:Cond}$,
applications for various choices of $\lambda(t)$ are considered such that the conditions
under which the generalized integral operator \eqref{eq-weighted-integralOperator}, for respective choice maps
$\mathcal{W}_\beta^\delta(\alpha,\gamma)$ to $\mathcal{S}^\ast_s(\zeta)$ are examined.

To start with consider
\begin{align}\label{eq-bernardi-lambda}
\lambda(t)=(1+c)t^c,\quad c>-1,
\end{align}
the integral operator \eqref{eq-weighted-integralOperator}
defined by the above weight function $\lambda(t)$ is known as
generalized Bernardi operator denoted by
$(\mathcal{B}_c^\delta)$. This operator is the particular case
of the generalized integral operators considered by
\cite{Aghalary} that follows in the sequel. For $\delta=1$,
this operator was introduced by S. D. Bernardi \cite{bernardi}.
Now taking this operator the following result is obtained.
\begin{theorem}\label{Thm-Gener:Starlike-Bernardi-gamma>0:gamma0-xi0}
Let $\gamma\geq0$ $(\mu\geq0, \nu\geq0)$, $\xi\in[0,1/2]$,
$\delta\geq1$ and $c>-1$. Further let $\beta<1$ satisfy
\eqref{Beta-Cond-Generalized:Starlike}, where $\lambda(t)$ is
given in \eqref{eq-bernardi-lambda}. Then for
$f(z)\in\mathcal{W}_\beta^\delta(\alpha,\gamma)$, the function
$z^{1-\delta}\left(\mathcal{B}_c^\delta(f)(z)\right)^\delta$
belongs to the class $\mathcal{S}^\ast(\xi)$, whenever

\begin{align*}
c\leq
\left\{
  \begin{array}{ll}
   5-\dfrac{\delta}{\mu}-\dfrac{\delta}{\nu}, & \quad{\rm for}{\,\,\,}\gamma>0{\,\,}
(1\leq\delta\leq\mu,{\,} 1\leq\delta\leq\nu){\,\,\,}{\rm and}{\,\,\,} \xi\in[0,1/2]; \\\\
   \quad3-\dfrac{\delta}{\alpha}, & \quad{\rm for}{\,\,\,}\gamma=0{\,\,}
\left(\mu=0,{\,}\nu=\alpha\in\left(0,{\delta}/{3}\right]\cup\left[\delta,\infty\right)\right){\,\,\,}{\rm and}{\,\,\,} \xi=0.
  \end{array}
\right.
\end{align*}
\end{theorem}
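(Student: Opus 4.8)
The plan is to specialize Theorem~\ref{Thm-Gener:Starlike-gamma0:xi0} to the particular weight $\lambda(t)=(1+c)t^c$ with $c>-1$, which is exactly the situation for the generalized Bernardi operator $\mathcal{B}_c^\delta$. So the first task is purely a verification that the hypotheses of Theorem~\ref{Thm-Gener:Starlike-gamma0:xi0} hold for this choice of $\lambda$, and the second is a direct computation of the logarithmic-derivative condition.

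First I would record that $\lambda(t)=(1+c)t^c$ is real-valued, non-negative and integrable on $(0,1)$ for $c>-1$, so the standing assumption on $\lambda$ in Theorem~\ref{Thm-Gener:Starlike-gamma0:xi0} is met. Next I would check positivity and integrability of $\Lambda_\nu^\delta(t)$ and $\Pi_{\mu,\nu}^\delta(t)$: since $\lambda(s)/s^{\delta/\nu}=(1+c)s^{c-\delta/\nu}$ is positive on $(0,1)$, the integral $\Lambda_\nu^\delta(t)=\int_t^1 (1+c)s^{c-\delta/\nu}\,ds$ is a positive, explicitly computable function on $(0,1)$ (a difference of powers of $t$, or a logarithm in the degenerate exponent case), and similarly $\Pi_{\mu,\nu}^\delta$ is positive and integrable by the same elementary estimates; one also verifies the two limit conditions $t^{\delta/\nu}\Lambda_\nu^\delta(t)\to0$ and $t^{\delta/\mu}\Pi_{\mu,\nu}^\delta(t)\to0$ as $t\to0^+$, which again reduce to inspecting powers of $t$. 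In particular the constant $\beta<1$ determined by \eqref{Beta-Cond-Generalized:Starlike} with this $\lambda$ is well-defined.

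Then comes the computational heart: for $\lambda(t)=(1+c)t^c$ one has $\lambda'(t)=(1+c)c\,t^{c-1}$, hence
\begin{align*}
\dfrac{t\lambda'(t)}{\lambda(t)}=\dfrac{t\cdot(1+c)c\,t^{c-1}}{(1+c)t^c}=c.
\end{align*}
Therefore the inequality $\dfrac{t\lambda'(t)}{\lambda(t)}\le 5-\dfrac{\delta}{\mu}-\dfrac{\delta}{\nu}$ (for $\gamma>0$, $1\le\delta\le\min\{\mu,\nu\}$) becomes exactly $c\le 5-\dfrac{\delta}{\mu}-\dfrac{\delta}{\nu}$, and the inequality $\dfrac{t\lambda'(t)}{\lambda(t)}\le 3-\dfrac{\delta}{\alpha}$ (for $\gamma=0$, $\nu=\alpha\in(0,\delta/3]\cup[\delta,\infty)$, $\xi=0$) becomes $c\le 3-\dfrac{\delta}{\alpha}$. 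Feeding these into Theorem~\ref{Thm-Gener:Starlike-gamma0:xi0} gives that $z^{1-\delta}(\mathcal{B}_c^\delta(f)(z))^\delta\in\mathcal{S}^\ast(\xi)$ (with $\xi\in[0,1/2]$ in the case $\gamma>0$, and $\xi=0$ in the case $\gamma=0$, matching the stated bound $1-\tfrac1\delta\le\zeta\le 1-\tfrac1{2\delta}$ on $\zeta$), which is precisely the claim. I do not anticipate a genuine obstacle here; the only mildly delicate point is handling the special exponents (e.g.\ $c=\delta/\nu$, or $\delta=\mu$ or $\delta=\nu$) where the closed form of $\Lambda_\nu^\delta$ or $\Pi_{\mu,\nu}^\delta$ involves a logarithm rather than a pure power, so the positivity/integrability bookkeeping must be done case by case — but in each case the conclusion is immediate. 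One should also note that the $\gamma>0$ conclusion strengthens to $\xi\in[0,1/2]$ because Theorem~\ref{Thm-gamma>0-Generalized-Starlike} already covers that whole range for the same bound \eqref{eq-Gener:Starlike-Main:Cond-gamma>0}, so one may cite that theorem rather than Theorem~\ref{Thm-Gener:Starlike-gamma0:xi0} for the first line of the displayed dichotomy.
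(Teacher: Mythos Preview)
Your approach is correct and essentially identical to the paper's: compute $t\lambda'(t)/\lambda(t)=c$ for $\lambda(t)=(1+c)t^c$ and then invoke Theorem~\ref{Thm-gamma>0-Generalized-Starlike} for the $\gamma>0$ case (full range $\xi\in[0,1/2]$) and Theorem~\ref{Thm-Gener:Starlike-gamma0:xi0} for the $\gamma=0$, $\xi=0$ case. The paper's proof is two lines and omits the hypothesis-checking for $\Lambda_\nu^\delta$, $\Pi_{\mu,\nu}^\delta$ that you spell out, but the substance is the same.
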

\begin{proof}
Using $\lambda(t)=(1+c)t^c$, $c>-1$, it can be easily seen that
$t\lambda'(t)/\lambda(t)=c$. Applying Theorem
\ref{Thm-gamma>0-Generalized-Starlike} for $\xi\in(0,1/2]$, and
Theorem \ref{Thm-Gener:Starlike-gamma0:xi0} for $\xi=0$, the
result is immediate.
\end{proof}

\begin{remark}
\begin{enumerate}[1.]
\item For $\delta=1$ and $\gamma>0$, {\rm\cite[Theorem
    5.2]{SarikaStarlike}} {\rm(see also \cite[Theorem
    3.1]{SuzeiniStarlike})} give weaker bounds for $c$ when
    compared to {\rm Theorem
    \ref{Thm-Gener:Starlike-Bernardi-gamma>0:gamma0-xi0}}.
\item When $\delta=1$ and $\xi=0$, {\rm Theorem
    \ref{Thm-Gener:Starlike-Bernardi-gamma>0:gamma0-xi0}}
    improves the result given in {\rm\cite[Theorem
    5.1]{AbeerS}} {\rm(see also \cite[Theorem
    5.2]{SarikaStarlike})}.
\end{enumerate}
\end{remark}
Taking $c=0$ in Theorem
\ref{Thm-Gener:Starlike-Bernardi-gamma>0:gamma0-xi0} reduces to
the interesting result, which is listed as a corollary.
\begin{corollary}\label{Corollary-Generalized-Starlike-Bernardi-c=0}
Let $\gamma\geq0(\mu\geq0,\nu\geq0)$, $\xi\in[0,1/2]$ and
$\delta\geq1$. Let $\beta<1$ satisfy

\begin{align*}
\dfrac{\beta}{1-\beta}=-\int_0^1 g_{\mu,\nu}^\delta(t)dt,
\end{align*}
where $g_{\mu,\nu}^\delta(t)$ is given in
\eqref{eq-generalized:starlike-g:gamma>0} for $\gamma>0$, and
\eqref{eq-generalized:starlike-g:gamma0} for $\gamma=0$.
Moreover, $\mathcal{F}(z)\in\mathcal{A}$ satisfies

{\small{
\begin{align}\label{eq-Generalized-Starlike-Bernardi-c=0}
{\rm Re\,} \left(\left[z\left(\!\dfrac{\mathcal{F}(z)}{z}\!\right)^\delta\right]'
\!+\!\dfrac{1}{\delta}\left(\alpha\!-\!\gamma\left(\!1\!-\!\dfrac{1}{\delta}\!\right)\right)
z\left[z\left(\!\dfrac{\mathcal{F}(z)}{z}\!\right)^\delta\right]''
+\dfrac{\gamma}{\delta^2} z^2\left[z\left(\!\dfrac{\mathcal{F}(z)}{z}\!\right)^\delta\right]'''-\beta\right)>0,
\end{align}}}
in the domain $\mathbb{D}$. Then the function
$z^{1-\delta}\mathcal{F}(z)^\delta\in\mathcal{S}^\ast(\xi)$,
whenever

\begin{enumerate}[{\rm(i).}]
  \item $\gamma>0(\mu>0,\nu>0)$,
      $\delta\leq\min\{\mu,\nu\}$ and $\xi\in[0,1/2]$,
  \item $\gamma=0(\mu=0,\nu=\alpha\geq0)$,
      $\alpha\in\left(0,{\delta}/{3}\right]\cup\left[\delta,\infty\right)$
      and $\xi=0$.
\end{enumerate}
\end{corollary}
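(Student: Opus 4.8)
The plan is to recognize Corollary \ref{Corollary-Generalized-Starlike-Bernardi-c=0} as nothing more than the $c=0$ specialization of Theorem \ref{Thm-Gener:Starlike-Bernardi-gamma>0:gamma0-xi0}, after unwinding the definitions of the operator $\mathcal{B}_c^\delta$ and the class $\mathcal{W}_\beta^\delta(\alpha,\gamma)$ at $c=0$. First I would set $\lambda(t)\equiv 1$ (which is $(1+c)t^c$ at $c=0$), so that $V_\lambda^\delta(f)(z)=\left(\int_0^1 (f(tz)/t)^\delta\,dt\right)^{1/\delta}$; a change of variable $s=tz$ shows that if we write $\mathcal{F}(z):=\mathcal{B}_0^\delta(f)(z)=V_\lambda^\delta(f)(z)$, then $z\left(\mathcal{F}(z)/z\right)^\delta=\int_0^1 (f(tz)/(tz))^\delta z\,dt$, equivalently $\left(\mathcal{F}(z)/z\right)^\delta$ is the $\delta$-th power average of $(f(tz)/(tz))^\delta$. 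Differentiating once in $z$ recovers $(f(z)/z)^\delta$ as a first-order combination of $z(\mathcal{F}/z)^\delta$ and its derivative, and iterating (second and third derivatives) produces exactly the linear combination of $[z(\mathcal{F}/z)^\delta]'$, $z[z(\mathcal{F}/z)^\delta]''$ and $z^2[z(\mathcal{F}/z)^\delta]'''$ appearing in \eqref{eq-Generalized-Starlike-Bernardi-c=0}. The coefficients $\tfrac1\delta(\alpha-\gamma(1-\tfrac1\delta))$ and $\tfrac{\gamma}{\delta^2}$ are precisely what falls out of matching the differential operator that inverts the averaging integral against the functional defining $\mathcal{W}_\beta^\delta(\alpha,\gamma)$.

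Thus the key computational step is to verify the operator identity: $f\in\mathcal{W}_\beta^\delta(\alpha,\gamma)$ if and only if $\mathcal{F}=\mathcal{B}_0^\delta(f)$ satisfies the positivity condition \eqref{eq-Generalized-Starlike-Bernardi-c=0}. This is a direct manipulation. Writing $G(z):=z(f(z)/z)^\delta=z^{1-\delta}f(z)^\delta$, one checks that $(f/z)^\delta=G'(z)$ modulo the integral, and that the bracketed quantity inside ${\rm Re}\,e^{i\phi}(\cdots-\beta)$ in the definition of $\mathcal{W}_\beta^\delta(\alpha,\gamma)$ equals, after simplification using $zf'/f = 1 + zG'/G \cdot(\text{stuff})/\delta$ type relations, a second-order differential expression in $G$; applying the averaging operator $\mathcal{B}_0^\delta$ turns $G$ into $z(\mathcal{F}/z)^\delta$ and lowers the differentiation order appropriately, yielding the displayed third-order (in $z(\mathcal{F}/z)^\delta$) expression. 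Since $\phi\in\mathbb{R}$ is arbitrary, the ${\rm Re}\,e^{i\phi}(\cdots)>0$ condition is equivalent to the plain ${\rm Re}(\cdots)>0$ after absorbing the rotation, matching \eqref{eq-Generalized-Starlike-Bernardi-c=0}.

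With that identification in place, the rest is immediate: the hypothesis that $\beta<1$ satisfies $\beta/(1-\beta)=-\int_0^1 g_{\mu,\nu}^\delta(t)\,dt$ is exactly \eqref{Beta-Cond-Generalized:Starlike} with $\lambda\equiv 1$; the condition $t\lambda'(t)/\lambda(t)=0$ trivially satisfies \eqref{eq-Gener:Starlike-Main:Cond-gamma>0} when $5-\delta/\mu-\delta/\nu\geq 0$, i.e. when $\delta\leq\min\{\mu,\nu\}$ (since then each of $\delta/\mu,\delta/\nu\le 1$ so the right side is $\ge 3>0$), covering case (i), and it satisfies $3-\delta/\alpha\ge 0$ exactly when $\alpha\ge\delta/3$, while the other branch of Theorem \ref{Thm-Gener:Starlike-gamma0:xi0}'s hypothesis requires $\alpha\in(0,\delta/3]\cup[\delta,\infty)$ — matching case (ii). Hence Theorem \ref{Thm-Gener:Starlike-Bernardi-gamma>0:gamma0-xi0} with $c=0$ gives $z^{1-\delta}\mathcal{F}(z)^\delta\in\mathcal{S}^\ast(\xi)$, which is the claim. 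I expect the only genuine obstacle to be the bookkeeping in the operator identity — tracking the three differentiations and confirming that the coefficients collapse to $\tfrac1\delta(\alpha-\gamma(1-\tfrac1\delta))$ and $\tfrac{\gamma}{\delta^2}$ — but this is the kind of calculation already implicit in the definition of $\mathcal{W}_\beta^\delta(\alpha,\gamma)$ via the $\mu,\nu$ parametrization \eqref{eq-mu+nu}, so no new idea is needed.
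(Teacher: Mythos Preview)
Your proposal is correct and follows essentially the same route as the paper: recognize the corollary as the $c=0$ case of Theorem~\ref{Thm-Gener:Starlike-Bernardi-gamma>0:gamma0-xi0}, identify $\mathcal{F}=\mathcal{B}_0^\delta(f)$, and check that condition~\eqref{eq-Generalized-Starlike-Bernardi-c=0} on $\mathcal{F}$ is exactly the $\mathcal{W}_\beta^\delta(\alpha,\gamma)$ condition on $f$. The paper compresses your operator-identity computation into the single relation $(f/z)^\delta=\bigl(z(\mathcal{F}/z)^\delta\bigr)'$ and asserts the rest as ``apparent,'' but the content is the same.
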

\begin{proof}
It is apparent that for any function
$\mathcal{F}(z)\in\mathcal{A}$ satisfying the condition
\eqref{eq-Generalized-Starlike-Bernardi-c=0}, then its
corresponding function $f(z)$ defined by the relation
$(f/z)^\delta=(z(\mathcal{F}/z)^\delta)'$ belongs to
$\mathcal{W}_\beta^\delta(\alpha,\gamma)$. Therefore, the
integral representation of $\mathcal{F}(z)$ in terms of $f(z)$
is given by

\begin{align*}
\mathcal{F}(z)=\left(\int_0^1\left(\dfrac{f(tz)}{t}\right)^\delta dt\right)^{1/\delta}.
\end{align*}
With the given hypothesis and taking $c=0$, the result directly
follows from Theorem
\ref{Thm-Gener:Starlike-Bernardi-gamma>0:gamma0-xi0}.
\end{proof}
Using Corollary
\ref{Corollary-Generalized-Starlike-Bernardi-c=0}, the
following two cases are taken into account:
\begin{enumerate}[(i).]
  \item Consider $\gamma=0$, $\delta=1$,
      $\alpha\in\left(0,{1}/{3}\right]\cup\left[1,\infty\right)$
      and $\xi=0$. Let $\beta<1$ satisfy
\begin{align*}
\dfrac{\beta}{1-\beta}=-\int_0^1 g_{0,\alpha}^1(t)dt,
\end{align*}
where $g_{0,\alpha}^1(t)$ is given in
\eqref{eq-generalized:starlike-g:gamma0}. Using {\rm
Corollary
\ref{Corollary-Generalized-Starlike-Bernardi-c=0}}, ${\rm
Re}(\mathcal{F}'(z)\!+\!\alpha
z\mathcal{F}''(z))\!>\!\beta$, implies
$\mathcal{F}(z)\!\in\!\mathcal{S}^\ast$. When $\alpha=1$,

\begin{align*}
{\rm Re}(\mathcal{F}'(z)\!+\! z\mathcal{F}''(z))\!>\beta=\dfrac{1-2\ln 2}{2-2\ln 2}\thickapprox\!-0.62944
\end{align*}
implies ${\mathcal{F}(z)}\in S^*$.

  \item Consider $\alpha=3$, $\gamma=1$, i.e.,
      $(\mu=\nu=1)$, $\xi\in[0,1/2]$ and $\delta=1$. Let
      $\beta<1$ satisfy
\begin{align*}
\dfrac{\beta}{1-\beta}=-\int_0^1 g_{1,1}^1(t)dt,
\end{align*}
where $g_{1,1}^1(t)$ is given in
\eqref{eq-generalized:starlike-g:gamma>0}. Further, using
the series representation of the function $g_{1,1}^1(t)$
given in \eqref{eq-gener:starlike-g:series}, by a simple computation, we get
%
%      \begin{align*}
%g_{1,1}^1(t)=2\sum_{n=0}^{\infty}\dfrac{(-1)^n(t^n)}{(n+1)}-1,
%      \end{align*}
%      which clearly gives
%
      \begin{align*}
\dfrac{\beta}{(1-\beta)}
%=-\int_0^1\left(2\sum_{n=0}^{\infty}\dfrac{(-1)^n(t^n)}{(n+1)}-1\right)dt=-2\sum_{n=0}^{\infty}\dfrac{(-1)^n}{(n+1)^2}+1
= 1-\dfrac{\pi^2}{6}.
      \end{align*}
%It is known that the series expansion of
%
%      \begin{align*}
%\dfrac{\pi^2}{8}=\dfrac{1}{1^2}+\dfrac{1}{3^2}+\dfrac{1}{5^2}+\cdots\quad{\rm and}\quad
%\dfrac{\pi^2}{24}=\dfrac{1}{2^2}+\dfrac{1}{4^2}+\dfrac{1}{6^2}+\cdots
%      \end{align*}
%      which implies
%
%      \begin{align*}
%\dfrac{\beta}{(1-\beta)}=1-\dfrac{\pi^2}{6}.
%      \end{align*}
Thus, using {\rm Corollary
\ref{Corollary-Generalized-Starlike-Bernardi-c=0}},

\begin{align*}
{\rm Re}(\mathcal{F}'(z)+3
z\mathcal{F}''(z)+z^2\mathcal{F}'''(z))>\beta=\dfrac{(\pi^2-6)}{(\pi^2-12)}\thickapprox
-1.81637
\end{align*}
\end{enumerate}
\begin{remark}
The sharp range for $\beta$ improves the result obtained in
{\rm \cite[Example 4.4]{AliAAAThirdOrder}}.
\end{remark}

Secondly, consider the case when $\gamma=0$, $\xi\in[0,1/2]$
and $\lambda(1)=0$. In order to prove Theorem
\ref{Thm-Gener:starlike-Decr} for the given cases, it is enough
to show that the function $a(t)$ defined in
\eqref{eq-condition-gamma=0} decreases for $t\in(0,1)$. Now
taking the logarithmic derivative of $a(t)$ will give

\begin{align*}
\dfrac{a'(t)}{a(t)}=\dfrac{2(t+\xi+\xi t)}{(1-t^2)b(t)}\left(b(t)+\dfrac{(1-t^2)b'(t)}{2(t+\xi+\xi t)}\right).
\end{align*}
It is easy to see that the terms $(t+\xi+\xi t)$, $(1-t^2)$,
$a(t)$ and $b(t)$ are positive for all values of $t\in(0,1)$
and $\xi\in[0,1/2]$. Thus $a'(t)\leq0$ is equivalent of
obtaining $r(t)\leq0$, where

\begin{align*}
r(t):=b(t)+\dfrac{(1-t^2)b'(t)}{2(t+\xi+\xi t)}.
\end{align*}
Clearly $r(1)=0$, therefore if $r(t)$ is increasing function of
$t\in(0,1)$ then $a'(t)\leq0$ which completes the proof. Hence
it is enough to prove

\begin{align*}
r'(t)=\dfrac{(1+t)}{2(t+\xi(1+t))^2}\left(\dfrac{Y(t)}{t}b'(t)+X(t)b''(t)\right)\geq0
\end{align*}
or equivalently,

\begin{align}\label{eq-gamma=0-condition-1}
s(t):=\dfrac{Y(t)}{t}b'(t)+X(t)b''(t)\geq0,
\end{align}
where $X(t)$ and $Y(t)$ are given in
\eqref{eq-gener:starlike-XY}, $b'(t)$ and $b''(t)$ are given in
\eqref{eq-gamma=0=b'} and \eqref{eq-gamma=0=b''}, respectively.
Substituting the value of $b'(t)$ and $b''(t)$ in
\eqref{eq-gamma=0-condition-1}, $s(t)$ is equivalent to

\begin{align*}
s(t)=\left(\dfrac{\delta}{\alpha}\!-\!1\right)\left(Y(t)+\left(\dfrac{\delta}{\alpha}\!-\!2\right)X(t)\right)t^{\delta/\alpha\!-\!3}\Lambda_\alpha^\delta(t)
-\left(Y(t)+\left(\dfrac{\delta}{\alpha}\!-\!2\right)X(t)\right)t^{-2}\lambda(t)\\
-X(t)t^{-1}\lambda'(t).
\end{align*}
The hypothesis $\lambda(1)=0$ directly implies that $s(1)=0$.
If $s(t)$ is decreasing function of $t\in(0,1)$, clearly means
that $a'(t)\leq0$. Differentiating $s(t)$ with respect to $t$
gives

{\small{
\begin{align*}
s'(t)=\left(\dfrac{\delta}{\alpha}-1\right)\left(\left(\dfrac{\delta}{\alpha}-3\right)\left[Y(t)+\left(\dfrac{\delta}{\alpha}-2\right)X(t)\right]
+tY'(t)+\left(\dfrac{\delta}{\alpha}-2\right)tX'(t)\right)t^{\delta/\alpha-4}\Lambda_\alpha^\delta(t)\\
\!+\!\left(\!-\!\left(\!\dfrac{\delta}{\alpha}\!-\!1\!\right)\!\left[\!Y(t)\!+\!
\left(\!\dfrac{\delta}{\alpha}\!-\!2\!\right)\!X(t)\!\right]\!-\!\left[\!tX'(t)
\left(\!\dfrac{\delta}{\alpha}\!-\!2\!\right)\!+\!tY'(t)\!\right]\!+\!2\left[\!X(t)\!\left(\!\dfrac{\delta}{\alpha}\!-\!2\!\right)
\!+\!Y(t)\!\right]\!\right)t^{-3}\lambda(t)\\
+\left(-\left[Y(t)+\left(\dfrac{\delta}{\alpha}-2\right)X(t)\right]-tX'(t)+X(t)\right)t^{-2}\lambda'(t)-X(t)t^{-1}\lambda''(t).
\end{align*}}}
When $1\leq\delta\leq\alpha$, then it is easy to see that

\begin{align*}
\left(\dfrac{\delta}{\alpha}-1\right)\left(\left(\dfrac{\delta}{\alpha}-3\right)
\left[Y(t)+\left(\dfrac{\delta}{\alpha}-2\right)X(t)\right]
+tY'(t)+\left(\dfrac{\delta}{\alpha}-2\right)tX'(t)\right)\leq0.
\end{align*}
Thus, to verify that $a'(t)\leq0$, it is enough to show

{\small{
\begin{align}\label{eq-Gener:Starlike-Main-gamma0:lambda0}
\left(\left(\!\dfrac{\delta}{\alpha}\!-\!1\!\right)\!
\left[\!Y(t)\!+\!\left(\!\dfrac{\delta}{\alpha}\!-\!2\!\right)\!X(t)\!\right]\!+\!\left[\!tX'(t)
\left(\!\dfrac{\delta}{\alpha}\!-\!2\!\right)\!+\!tY'(t)\!\right]\!-\!2\left[\!X(t)\!\left(\!\dfrac{\delta}{\alpha}\!-\!2\!\right)
\!+\!Y(t)\!\right]\!\right)t^{-3}\lambda(t)\nonumber\\
-\left(-\left[Y(t)+\left(\dfrac{\delta}{\alpha}-2\right)X(t)\right]-tX'(t)+X(t)\right)t^{-2}\lambda'(t)+X(t)t^{-1}\lambda''(t)\geq0,
\end{align}}}
for $1\leq\delta\leq\alpha$.

By a simple calculation, the terms

\begin{align}\label{eq-generalized-cond-gamma0}
&\left(\left(\!\dfrac{\delta}{\alpha}\!-\!1\!\right)\!
\left[\!Y(t)\!+\!\left(\!\dfrac{\delta}{\alpha}\!-\!2\!\right)\!X(t)\!\right]\!+\!\left[\!tX'(t)
\left(\!\dfrac{\delta}{\alpha}\!-\!2\!\right)\!+\!tY'(t)\!\right]\!-\!2\left[\!X(t)\!\left(\!\dfrac{\delta}{\alpha}\!-\!2\!\right)
\!+\!Y(t)\right]\right),\nonumber\\
&{\rm and}\quad\quad\left(-\left[Y(t)+\left(\dfrac{\delta}{\alpha}-2\right)X(t)\right]-tX'(t)+X(t)\right)
\end{align}
are positive for $1\leq\delta\leq\alpha$.

Thus, to prove inequality
\eqref{eq-Gener:Starlike-Main-gamma0:lambda0} for
$1\leq\delta\leq\alpha$, it is enough to show
$\lambda(t)\geq0$, $\lambda'(t)\leq0$ and $\lambda''(t)\geq0$.

For the function
\begin{align*}
\omega(1-t)=1+\displaystyle\sum_{n=1}^\infty x_n(1-t)^n,\quad
\quad x_n\geq0,\quad t\in(0,1),
\end{align*}
define
\begin{align}\label{eq-Generalized-hypergeometric-fn}
\lambda(t)=K t^{b-1}(1-t)^{c-a-b}\omega(1-t),
\end{align}
where $K$ is chosen such that it satisfies normalization
condition $\int_0^1\lambda(t)dt=1$. Thus the weighted integral
operator defined in \eqref{eq-weighted-integralOperator} with
$\lambda(t)$ given by \eqref{eq-Generalized-hypergeometric-fn}
is represented as
\begin{align*}
H_{a,b,c}^\delta(f)(z)=\left(K\int_0^1 t^{b-1}(1-t)^{c-a-b}\omega(1-t) \left(\frac{f(tz)}{t}\right)^\delta dt\right)^{1/\delta}.
\end{align*}
This operator is new in the literature whereas for the
particular cases of this operator many interesting results  are
available. For example, when $\delta=1$, the operator
$H_{a,b,c}^1$ was discussed in the literature by several
authors for similar problems. For details refer to
\cite{AbeerS, DeviPascu} and references therein.

 The following
result provides the conditions such that
$(z^{1-\delta}\left(H_{a,b,c}^\delta(f)(z)\right)^\delta)$
belongs to the class $\in\mathcal{S}^\ast(\xi)$.
\begin{theorem}\label{Thm-gamma=>0-Generalized-Starlike-hyper}
Let $\gamma\geq0$ $(\mu\geq0, \nu\geq0)$, $\xi\in[0,1/2]$,
$\delta\geq1$ and $a, b, c>0$. Let $\beta<1$ satisfy
\eqref{Beta-Cond-Generalized:Starlike}, where $\lambda(t)$ is
given by \eqref{eq-Generalized-hypergeometric-fn}. Then for
$f(z)\in\mathcal{W}_\beta^\delta(\alpha,\gamma)$, the function
$(z^{1-\delta}\left(H_{a,b,c}^\delta(f)(z)\right)^\delta)$
belongs to the class $\in\mathcal{S}^\ast(\xi)$, whenever

\begin{enumerate}[{\rm (i).}]
  \item $(c\!-\!a\!-\!b)\geq1$ and $0<
      b\leq1\quad\quad\quad\quad\quad$ for $\gamma=0$ and
      $\delta\leq\alpha$,
  \item $(c\!-\!a\!-\!b)\geq0$ and $0<
      b\leq\left(6\!-\!\dfrac{\delta}{\mu}\!-\!\dfrac{\delta}{\nu}\right)\quad$
      for $\gamma>0$ and $\delta\leq\min\{\mu,\nu\}$.
\end{enumerate}
\end{theorem}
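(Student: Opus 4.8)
The plan is to verify that the weight function $\lambda(t)$ given in \eqref{eq-Generalized-hypergeometric-fn} satisfies the hypotheses of the sufficient-criterion theorems already established, namely Theorem \ref{Thm-gamma>0-Generalized-Starlike} for the case $\gamma>0$ and the analysis preceding Theorem \ref{Thm-Gener:Starlike-gamma0:xi0} (more precisely, the refinement for the case $\gamma=0$, $\lambda(1)=0$ that culminates in the requirement $\lambda(t)\geq 0$, $\lambda'(t)\leq 0$, $\lambda''(t)\geq 0$ for $1\le\delta\le\alpha$). So the proof splits into the two enumerated cases exactly as in the statement.

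First I would dispose of the structural prerequisites. Write $\lambda(t)=K t^{b-1}(1-t)^{c-a-b}\omega(1-t)$ with $\omega(1-t)=1+\sum_{n\ge1}x_n(1-t)^n$, $x_n\ge0$. Since $K>0$ (chosen by the normalization $\int_0^1\lambda=1$, which is finite precisely because $b>0$ and $c-a-b>-1$), $t^{b-1}>0$, $(1-t)^{c-a-b}>0$ and $\omega(1-t)>0$ on $(0,1)$, we immediately get $\lambda(t)\ge0$ on $(0,1)$; and when $c-a-b\ge1$ (resp.\ $\ge0$) one checks the boundary behaviour so that the positivity/integrability of $\Lambda_\nu^\delta$ and $\Pi_{\mu,\nu}^\delta$ on $[0,1]$ and the normalization limits hold. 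Note $c-a-b\ge 1$ forces $\lambda(1)=0$, which is exactly the hypothesis used to launch the $\gamma=0$ refinement.

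For case (i) ($\gamma=0$, $\delta\le\alpha$), the task reduces to showing $\lambda'(t)\le0$ and $\lambda''(t)\ge0$ on $(0,1)$. I would compute the logarithmic derivative
\begin{align*}
\frac{\lambda'(t)}{\lambda(t)}=\frac{b-1}{t}-\frac{c-a-b}{1-t}-\frac{\omega'(1-t)}{\omega(1-t)},
\end{align*}
where $\omega'(1-t)=\sum_{n\ge1} n x_n(1-t)^{n-1}\ge0$. Under $0<b\le1$ the term $(b-1)/t\le0$; under $c-a-b\ge1>0$ the term $-(c-a-b)/(1-t)<0$; and the last term is $\le0$. Hence $\lambda'(t)\le0$. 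For $\lambda''(t)\ge0$ one differentiates once more: each of the three pieces of $\lambda'/\lambda$ is itself nonincreasing (this is where $0<b\le1$, $c-a-b\ge0$, and the sign pattern of the $x_n$ together with convexity of $(1-t)^{-1}$ and of the ratio $\omega'/\omega$ are used), so writing $\lambda''=\lambda\big[(\lambda'/\lambda)^2+(\lambda'/\lambda)'\big]$ and observing $(\lambda'/\lambda)'\ge0$ gives $\lambda''\ge0$. Then invoking the displayed conclusion ``$\lambda(t)\ge0$, $\lambda'(t)\le0$, $\lambda''(t)\ge0\Rightarrow a'(t)\le0$'' from Section \ref{Sec-Gener:Starlike-Suff:Cond} yields $z^{1-\delta}(F_\delta)^\delta\in\mathcal{S}^\ast(\xi)$ for $\xi=0$; the $\beta$ is fixed by \eqref{Beta-Cond-Generalized:Starlike}.

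For case (ii) ($\gamma>0$, $\delta\le\min\{\mu,\nu\}$), by Theorem \ref{Thm-gamma>0-Generalized-Starlike} it suffices to check $t\lambda'(t)/\lambda(t)\le 5-\delta/\mu-\delta/\nu$. From the formula above,
\begin{align*}
\frac{t\lambda'(t)}{\lambda(t)}=(b-1)-\frac{(c-a-b)t}{1-t}-\frac{t\,\omega'(1-t)}{\omega(1-t)}\le b-1,
\end{align*}
since $c-a-b\ge0$ makes the middle term $\le0$ and the last term is $\le0$ on $(0,1)$. Thus the inequality $b-1\le 5-\delta/\mu-\delta/\nu$, i.e.\ $b\le 6-\delta/\mu-\delta/\nu$, is precisely the stated hypothesis, and Theorem \ref{Thm-gamma>0-Generalized-Starlike} delivers $F_\delta\in\mathcal{S}^\ast_s(\zeta)$ equivalently $z^{1-\delta}(F_\delta)^\delta\in\mathcal{S}^\ast(\xi)$ for $\xi\in[0,1/2]$.

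The main obstacle I anticipate is case (i): verifying $\lambda''(t)\ge0$ rigorously, because it requires controlling the convexity of the correction factor $\omega(1-t)$ — one must check that $t\mapsto\omega'(1-t)/\omega(1-t)$ is nonincreasing, which does not follow from $x_n\ge0$ alone in full generality and may need the mild additional structure implicit in how $\omega$ enters (or an argument that the dominant contributions from $(1-t)^{c-a-b}$ and $t^{b-1}$ already force convexity). By contrast, case (ii) is essentially a one-line bound once the logarithmic derivative is written down. I would therefore spend the bulk of the write-up making the $\gamma=0$ sign analysis of $\lambda'$ and $\lambda''$ airtight, and treat $\gamma>0$ as an immediate corollary of Theorem \ref{Thm-gamma>0-Generalized-Starlike}.
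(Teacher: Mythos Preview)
Your treatment of case (ii) ($\gamma>0$) is essentially identical to the paper's: compute $t\lambda'(t)/\lambda(t)=(b-1)-(c-a-b)\frac{t}{1-t}-t\frac{\omega'(1-t)}{\omega(1-t)}$, bound it above by $b-1$ using $c-a-b\ge0$ and $\omega'\ge0$, and invoke Theorem~\ref{Thm-gamma>0-Generalized-Starlike}. That part is fine.

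Case (i) has a genuine gap, and it is exactly the obstacle you flagged. Your route to $\lambda''\ge0$ via $\lambda''=\lambda\bigl[(\lambda'/\lambda)^2+(\lambda'/\lambda)'\bigr]$ and ``$(\lambda'/\lambda)'\ge0$'' does not go through: in
\[
\left(\frac{\lambda'}{\lambda}\right)'=\frac{1-b}{t^2}-\frac{c-a-b}{(1-t)^2}-\frac{d}{dt}\!\left[\frac{\omega'(1-t)}{\omega(1-t)}\right],
\]
the middle term is \emph{negative} (since $c-a-b\ge1$), so the pieces are not all nonincreasing (in fact $(b-1)/t$ is nondecreasing when $b\le1$), and the sign of the $\omega$-term would require log-convexity of $\omega$, which $x_n\ge0$ does not give. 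So this decomposition cannot be salvaged in the generality claimed.

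The paper bypasses this entirely by expanding $\lambda''(t)$ directly:
\[
\lambda''(t)=Kt^{b-3}(1-t)^{c-a-b-2}\Bigl[P(t)\,\omega(1-t)+Q(t)\,\omega'(1-t)+t^2(1-t)^2\,\omega''(1-t)\Bigr],
\]
where $P(t)=(b-1)(b-2)(1-t)^2-2(b-1)(c-a-b)t(1-t)+(c-a-b)(c-a-b-1)t^2$ and $Q(t)=2t(1-t)\bigl[(c-a-b)t-(b-1)(1-t)\bigr]$. Under $0<b\le1$ and $c-a-b\ge1$ each coefficient $P,Q$ is nonnegative on $(0,1)$, and $\omega,\omega',\omega''\ge0$ all follow immediately from $x_n\ge0$ (no log-convexity needed). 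Hence $\lambda''\ge0$, and together with the easy $\lambda'\le0$ the $\gamma=0$ sufficient criterion (the analysis around \eqref{eq-Gener:Starlike-Main-gamma0:lambda0}, valid for all $\xi\in[0,1/2]$ once $\lambda(1)=0$) applies. Two side remarks: your restriction of case~(i) to $\xi=0$ is unnecessary---the paper's $\gamma=0$, $\lambda(1)=0$ analysis covers the full range $\xi\in[0,1/2]$; and the ``obstacle'' you anticipated dissolves once you use $\omega''\ge0$ instead of monotonicity of $\omega'/\omega$.
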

\begin{proof}
Differentiating $\lambda(t)$ defined in
\eqref{eq-Generalized-hypergeometric-fn} will give

\begin{align*}
\lambda'(t)=K t^{b-2}(1-t)^{c-a-b-1}\left(\dfrac{}{}\left[(b-1)(1-t)-(c-a-b)t\right]\omega(1-t)
-t(1-t)\omega'(1-t)\right),
\end{align*}
and

\begin{align*}
&\lambda^{\prime\prime}(t)=Kt^{b-3}(1\!-\!t)^{c\!-\!a\!-\!b\!-\!2}\left[\left(\dfrac{}{}(b\!-\!1)(b\!-\!2)(1\!-\!t)^2-
2(b\!-\!1)(c\!-\!a\!-\!b)t(1\!-\!t)\right.\right.\\
&\left.+(c\!-\!a\!-\!b)(c\!-\!a\!-\!b\!-\!1)t^2\dfrac{}{}\right)\omega(1\!-\!t)+\left[\dfrac{}{}2(c\!-\!a\!-\!b)t-
2(b\!-\!1)(1\!-\!t)\right]t(1\!-\!t)\omega'(1\!-\!t)\quad\quad\\
&\left.\dfrac{}{}+t^2(1-t)^2\omega''(1-t)\right].
\end{align*}
It is easy to note that when $(c-a-b)>0$, then $\lambda(t)$
defined in \eqref{eq-Generalized-hypergeometric-fn} has
$\lambda(1)=0$.

For the case $\gamma=0$ $(\mu=0,\nu=\alpha\geq0)$, from Theorem
\ref{Thm-Gener:starlike-Decr}, we infer that it is enough to
prove \eqref{eq-Gener:Starlike-Main-gamma0:lambda0}. Clearly
for $0\!\leq\!\xi\!\leq\!1/2$ and $1\leq\delta\leq\alpha$, the
conditions in \eqref{eq-generalized-cond-gamma0} are satisfied.
Hence, it remains to check the validity of $\lambda(t)\geq0$,
$\lambda'(t)\leq0$ and $\lambda''(t)\geq0$. However, using the
fact that $\omega(1-t)$, $\omega'(1-t)$ and $\omega''(1-t)$ are
non-negative for $t\in(0,1)$ a simple computation gives
$\lambda(t)\geq0$, $\lambda'(t)\leq0$ and $\lambda''(t)\geq0$,
when $(c-a-b)\geq1$ and $b\leq1$.

Now consider the case $\gamma>0$. Using $\lambda(t)$ given in
\eqref{eq-Generalized-hypergeometric-fn}, we get

\begin{align*}
\dfrac{t\lambda'(t)}{\lambda(t)}=(b-1)-(c-a-b)\dfrac{t}{1-t}-t\dfrac{\omega'(1-t)}{\omega(1-t)}.
\end{align*}
Thus the condition \eqref{eq-Gener:Starlike-Main:Cond-gamma>0}
is true only when

\begin{align}\label{eq-Generalized-Starlike-hypergeometric-cond1-gamma>0}
(b-1)-(c-a-b)\dfrac{t}{1-t}-t\dfrac{\omega'(1-t)}{\omega(1-t)}\leq
\left(5-\frac{\delta}{\mu}\!-\!\frac{\delta}{\nu}\right),
\end{align}
for $1\leq\delta\leq\min\{\mu,\nu\}$. Since $\omega(1-t)$ and
$\omega'(1-t)$ are non-negative on $t\in(0,1)$, therefore
\eqref{eq-Generalized-Starlike-hypergeometric-cond1-gamma>0} is
satisfied if

\begin{align*}
(b-1)-(c-a-b)\dfrac{t}{1-t}\leq\left(5-\frac{\delta}{\mu}\!-\!\frac{\delta}{\nu}\right),
\end{align*}
which is true whenever $c\geq a+b$ and
$b\leq\left(6-\frac{\delta}{\mu}\!-\!\frac{\delta}{\nu}\right)$.
Thus, by the given hypothesis and Theorem
\ref{Thm-Gener:starlike-Decr}, the result follows directly.
\end{proof}
\begin{remark}
\begin{enumerate}[1.]
\item When $\delta=1$ and $\gamma>0$, then {\rm Theorem
    \ref{Thm-gamma=>0-Generalized-Starlike-hyper}} gives
    better range for $b$ when compared to \cite[Theorem
    5.1]{SarikaStarlike} {\rm(see also \cite[Theorem
    5.5]{AbeerS} for the case $\xi=0$)}.
\item When $\delta=1$ and $\gamma=0$, {\rm Theorem
    \ref{Thm-gamma=>0-Generalized-Starlike-hyper}} cannot
    be compared with {\rm\cite[Theorem
    5.1]{SarikaStarlike}}. This is due to the fact that the
    bound for $\alpha$ is different in both the cases.
    Also, when $\xi=0$, due to different range for
    $\alpha$, the bounds for $a$, $b$ and $c$ are different
    in {\rm Theorem
    \ref{Thm-gamma=>0-Generalized-Starlike-hyper}} when
    compared with {\rm\cite[Theorem 2.4]{KimRonning}}.
\end{enumerate}
\end{remark}
Consider
\begin{align}\label{eq-komatu_operator}
\lambda(t)=\dfrac{(1+k)^p}{\Gamma(p)}t^{k}\left(\log\dfrac{1}{t}\right)^{p-1},
\quad \delta\geq 0\quad k>-1.
\end{align}
Then the integral operator \eqref{eq-weighted-integralOperator}
defined by the above weight function $\lambda(t)$ is the known
as generalized Komatu operator denoted by $(F_{k,\,p}^\delta)$.
This integral operator was considered in the work of A. Ebadian
\cite{Aghalary}. When $\delta=1$, the operator is reduced to
the one introduced by Y. Komatu \cite{komatu}.

Now, we are in the position to state the following result.
\begin{theorem}\label{Thm-Generalized-Starlike-Komatu}
Let $\gamma\geq0$ $(\mu\geq,\nu\geq0)$, $k>-1$, $p\geq1$,
$\xi\in[0,1/2]$ and $\delta\geq1$. Let $\beta\!<\!1$ satisfy
\eqref{Beta-Cond-Generalized:Starlike}, where $\lambda(t)$ is
given in \eqref{eq-komatu_operator}. Then for
$f(z)\in\mathcal{W}_\beta^\delta(\alpha,\gamma)$, the function
$z^{1-\delta}\left(F_{k,p}^\delta(f)(z)\right)^\delta$ belongs
to the class $\in\mathcal{S}^\ast(\xi)$, whenever

\begin{enumerate}[{\rm(i).}]
  \item $p\geq2\quad$ and $\quad
      -1<k\leq0\quad\quad\quad\quad\quad$ for $\gamma=0$
      and $\delta\leq\alpha$,
  \item $p\geq1\quad$ and $\quad
      -1<k\leq5-\dfrac{\delta}{\mu}-\dfrac{\delta}{\nu}\quad$
      for $\gamma>0$ and $\delta\leq\min\{\mu,\nu\}$.
\end{enumerate}
\end{theorem}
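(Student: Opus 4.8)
The plan is to reduce Theorem~\ref{Thm-Generalized-Starlike-Komatu} directly to Theorems~\ref{Thm-gamma>0-Generalized-Starlike} and~\ref{Thm-Gener:Starlike-gamma0:xi0}, exactly in the spirit of the proof of Theorem~\ref{Thm-Gener:Starlike-Bernardi-gamma>0:gamma0-xi0}. First I would record the logarithmic derivative of the weight $\lambda(t)$ defined in \eqref{eq-komatu_operator}. Writing $\lambda(t)=\dfrac{(1+k)^p}{\Gamma(p)}\,t^{k}\bigl(\log(1/t)\bigr)^{p-1}$, a direct differentiation gives
\begin{align*}
\dfrac{t\lambda'(t)}{\lambda(t)}=k-\dfrac{p-1}{\log(1/t)}.
\end{align*}
Since $\log(1/t)>0$ for $t\in(0,1)$ and $p\geq1$, the subtracted term is nonnegative, so $\dfrac{t\lambda'(t)}{\lambda(t)}\leq k$ on $(0,1)$. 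This is the single computation that drives the whole argument.

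Next I would verify the structural hypotheses needed to invoke the two quoted theorems: that $\lambda(t)$ is real-valued, nonnegative and integrable on $(0,1)$, and that the auxiliary weights $\Lambda_\nu^\delta(t)$ and $\Pi_{\mu,\nu}^\delta(t)$ from \eqref{eq-weighted:Lambda} and \eqref{eq-weighted:Pi} are positive on $(0,1)$ and integrable on $[0,1]$. Nonnegativity and the local integrability of $\lambda$ near $t=1$ are clear; integrability near $t=0$ follows because $t^{k}(\log(1/t))^{p-1}$ is integrable on $(0,1)$ for every $k>-1$ and $p\geq1$ (the logarithmic factor is absorbed by any power $t^{\varepsilon}$). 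Positivity of $\Lambda_\nu^\delta$ and $\Pi_{\mu,\nu}^\delta$ then follows from positivity of $\lambda$, and their integrability near $t=0$ is handled by the same log-power estimate together with the assumed bounds $\delta\leq\min\{\mu,\nu\}$ (case $\gamma>0$) or $\delta\leq\alpha$ (case $\gamma=0$), which control the exponents $\delta/\nu-1$, $\delta/\mu-\delta/\nu+1$ and $\delta/\alpha-1$.

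Then I would conclude case by case. For $\gamma>0$ with $\delta\leq\min\{\mu,\nu\}$ and $\xi\in(0,1/2]$: by the bound above and the hypothesis $k\leq 5-\dfrac{\delta}{\mu}-\dfrac{\delta}{\nu}$ we get $\dfrac{t\lambda'(t)}{\lambda(t)}\leq k\leq 5-\dfrac{\delta}{\mu}-\dfrac{\delta}{\nu}$, which is precisely condition \eqref{eq-Gener:Starlike-Main:Cond-gamma>0} of Theorem~\ref{Thm-gamma>0-Generalized-Starlike}; hence $z^{1-\delta}(F_{k,p}^\delta(f)(z))^\delta\in\mathcal{S}^\ast(\xi)$. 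For $\gamma=0$ with $\delta\leq\alpha$: here the stated conclusion forces $\xi=0$, i.e. $\zeta=1-1/\delta$, so one applies Theorem~\ref{Thm-Gener:Starlike-gamma0:xi0}, whose $\gamma=0$ branch requires $\dfrac{t\lambda'(t)}{\lambda(t)}\leq 3-\dfrac{\delta}{\alpha}$ together with $\alpha\in(0,\delta/3]\cup[\delta,\infty)$. With $\delta\leq\alpha$ the interval condition holds, and since $3-\dfrac{\delta}{\alpha}\geq 3-1=2\geq 0\geq k$ when $-1<k\leq 0$ — wait, one must be slightly careful: the hypothesis is $-1<k\leq 0$, and indeed $k\leq 0\leq 2\leq 3-\dfrac{\delta}{\alpha}$, so the inequality $\dfrac{t\lambda'(t)}{\lambda(t)}\leq k\leq 3-\dfrac{\delta}{\alpha}$ holds; additionally one needs $p\geq 2$ rather than merely $p\geq 1$ so that the argument of Theorem~\ref{Thm-Gener:Starlike-gamma0:xi0} for $\gamma=0,\xi=0$ applies. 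Thus Theorem~\ref{Thm-Gener:Starlike-gamma0:xi0} yields $z^{1-\delta}(F_{k,p}^\delta(f)(z))^\delta\in\mathcal{S}^\ast$, completing the proof.

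I do not anticipate a genuine obstacle here; the only point requiring mild care is the integrability of $\lambda$ and of $\Lambda_\nu^\delta,\Pi_{\mu,\nu}^\delta$ near $t=0$, where the factor $(\log(1/t))^{p-1}$ blows up and must be dominated by the polynomial factors supplied by the constraints $k>-1$, $\delta\leq\min\{\mu,\nu\}$ (resp. $\delta\leq\alpha$); everything else is a one-line substitution into the already-established sufficient conditions.
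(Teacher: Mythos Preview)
Your treatment of the case $\gamma>0$ is correct and is effectively what the paper does (the paper routes it through Theorem~\ref{Thm-gamma=>0-Generalized-Starlike-hyper}, whose case~(ii) is itself just an application of Theorem~\ref{Thm-gamma>0-Generalized-Starlike} via the same logarithmic-derivative bound you wrote down).

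The case $\gamma=0$, however, has a genuine gap. The theorem is stated for the full range $\xi\in[0,1/2]$; nothing in item~(i) forces $\xi=0$, so your sentence ``here the stated conclusion forces $\xi=0$'' is a misreading. Theorem~\ref{Thm-Gener:Starlike-gamma0:xi0} only delivers $\mathcal{S}^\ast(0)$ in the $\gamma=0$ branch, so by invoking it you establish at most the special case $\xi=0$ and leave $\xi\in(0,1/2]$ unproved. Relatedly, your explanation for why $p\geq2$ is needed is circular: Theorem~\ref{Thm-Gener:Starlike-gamma0:xi0} requires only the bound $t\lambda'(t)/\lambda(t)\leq 3-\delta/\alpha$, which your computation already gives for every $p\geq1$ once $k\leq0$; the hypothesis $p\geq2$ plays no role in your argument.

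The paper's route for $\gamma=0$ is different and explains both the range of $\xi$ and the constraint $p\geq2$. One writes the Komatu weight in the form \eqref{eq-Generalized-hypergeometric-fn} with $b=k+1$, $c-a-b=p-1$ and $\omega(1-t)=\bigl(\log(1/t)/(1-t)\bigr)^{p-1}$, and then applies Theorem~\ref{Thm-gamma=>0-Generalized-Starlike-hyper}. Its case~(i) rests on the analysis valid for all $\xi\in[0,1/2]$ when $\lambda(1)=0$, namely the sign conditions $\lambda\geq0$, $\lambda'\leq0$, $\lambda''\geq0$ in \eqref{eq-Gener:Starlike-Main-gamma0:lambda0}; these are exactly what force $c-a-b\geq1$ (i.e.\ $p\geq2$) and $0<b\leq1$ (i.e.\ $-1<k\leq0$). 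To repair your proof for $\gamma=0$ you should either go through Theorem~\ref{Thm-gamma=>0-Generalized-Starlike-hyper} as the paper does, or verify directly that $\lambda,\,-\lambda',\,\lambda''\geq0$ for the Komatu weight under $p\geq2$, $k\leq0$, and then appeal to \eqref{eq-Gener:Starlike-Main-gamma0:lambda0}.
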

\begin{proof}
Letting $(c-a-b)=p-1$, $b=k+1$ and
$\omega(1-t)=\left(\frac{\log(1/t)}{(1-t)}\right)^{p-1}$.
Therefore $\lambda(t)$ given in
\eqref{eq-Generalized-hypergeometric-fn} can be represented as

\begin{align*}
\lambda(t)=Kt^{k}(1-t)^{p-1}\omega(1-t),\quad{\mbox{where}\quad}  K=\dfrac{(1+k)^p}{\Gamma(p)}.
\end{align*}
Now, by the given hypothesis the result directly follows from
Theorem \ref{Thm-gamma=>0-Generalized-Starlike-hyper}.
\end{proof}
\begin{remark}
\begin{enumerate}[1.]
\item For $\delta=1$ and $\gamma>0$, {\rm Theorem
    \ref{Thm-Generalized-Starlike-Komatu}} yield better
    range for $k$ when compared to \cite[Theorem
    5.4]{SarikaStarlike} {\rm(see also \cite[Theorem
    5.4]{AbeerS} for the case $\xi=0$)}.
\item For $\delta=1$ and $\gamma=0$, {\rm Theorem
    \ref{Thm-Generalized-Starlike-Komatu}} cannot be
    compared with {\rm\cite[Theorem 5.4]{SarikaStarlike}}.
    This is due to the fact that the bound for $\alpha$ is
    different in both the cases.
\end{enumerate}
\end{remark}
Let
\begin{align*}%\label{eq-hohlov-operator}
\lambda(t)=\dfrac{\Gamma(c)}{\Gamma(a)\Gamma(b)\Gamma(c-a-b+1)}
t^{b-1}(1-t)^{c-a-b}{\,}_{2}F_1\left(\!\!\!\!
\begin{array}{cll}&\displaystyle c-a,\quad 1-a
\\
&\displaystyle c-a-b+1
\end{array};1-t\right),
\end{align*}
then the integral operator \eqref{eq-weighted-integralOperator}
defined by the above weight function $\lambda(t)$ is the known
as generalized Hohlov operator denoted by
$\mathcal{H}_{a,b,c}^\delta$. This integral operator was
considered in the work of A. Ebadian \cite{Aghalary}. When
$\delta=1$, the reduced integral transform can be represented
as the convolution of the normalized hypergeometric function
with the analytic function $z{\,}_2F_1(a,b;c;z)\ast f(z)$,
which was introduced in the work of Y. C. Kim and F. Ronning
\cite{KimRonning} and studied by several authors later. The
operator $\mathcal{H}_{a,b,c}^\delta$ with $a=1$ is the
generalized Carlson-Shaffer operator
($\mathcal{L}_{b,c}^\delta$) \cite{CarlsonShaffer}.

Using the above operators the following results are obtained.

\begin{theorem}\label{Thm-Generalized-Starlike-Hohlov}
Let $\gamma\geq0$ $(\mu\geq0, \nu\geq0)$, $\delta\geq1$,
$\xi\in[0,1/2]$ and $a, b, c>0$. Let $\beta\!<\!1$ satisfy

{\small{
\begin{align}\label{eq-Generalized-Starlike-beta-Hohlov}
\dfrac{\beta}{1\!-\!\beta}=-\dfrac{\Gamma(c)}{\Gamma(a)\Gamma(b)\Gamma(c\!-\!a\!-\!b\!+\!1)}\int_0^1\!\! t^{b-1}(1\!-\!t)^{c-a-b}
{\,}_{2}F_1\left(\!\!\!\!\!\!
\begin{array}{cll}&\displaystyle c\!-\!a,{\,} 1\!-\!a
\\
&\displaystyle c\!-\!a\!-\!b\!+\!1
\end{array};1\!-\!t\right)
g_{\mu,\nu}^\delta(t)dt,
\end{align}}}
where $g_{\mu,\nu}^\delta(t)$ is given in
\eqref{eq-generalized:starlike-g:gamma>0} for $\gamma>0$, and
\eqref{eq-generalized:starlike-g:gamma0} for $\gamma=0$. Then
for $f(z)\in\mathcal{W}_\beta^\delta(\alpha,\gamma)$, the
function
$z^{1-\delta}\left(\mathcal{H}_{a,b,c}^\delta(f)(z)\right)^\delta$
belongs to the class $\mathcal{S}^\ast(\xi)$, whenever
\begin{enumerate}[{\rm (i).}]
  \item $(c\!-\!a\!-\!b)\geq1\quad$ and $\quad0<
      b\leq1\quad\quad\quad\quad\quad$ for $\gamma=0$ and
      $\delta\leq\alpha$,
  \item $(c\!-\!a\!-\!b)\geq0\quad$ and $\quad0<
      b\leq6-\dfrac{\delta}{\mu}-\dfrac{\delta}{\nu}\quad$
      for $\gamma>0$ and $\delta\leq\min\{\mu,\nu\}$.
\end{enumerate}
\end{theorem}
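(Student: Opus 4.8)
The plan is to recognize that Theorem \ref{Thm-Generalized-Starlike-Hohlov} is a direct specialization of the general hypergeometric result Theorem \ref{Thm-gamma=>0-Generalized-Starlike-hyper}, obtained by choosing the admissible factor $\omega(1-t)$ to be the Gaussian hypergeometric function ${}_2F_1(c-a,1-a;c-a-b+1;1-t)$. First I would verify that this choice of $\omega$ actually satisfies the structural hypothesis required in Section \ref{Sec-Gener:Starlike-Suff:Cond}, namely that $\omega(1-t) = 1 + \sum_{n=1}^\infty x_n (1-t)^n$ with all $x_n \geq 0$ for $t \in (0,1)$. This is where the sign conditions on the parameters enter: the coefficients of ${}_2F_1(c-a,1-a;c-a-b+1;x)$ are $\dfrac{(c-a)_n (1-a)_n}{(c-a-b+1)_n\, n!}$, and for $a,b,c>0$ with $c-a-b \geq 0$ one checks that $(c-a)_n \geq 0$, $(1-a)_n \geq 0$ (or at least does not change the overall sign --- care is needed when $a \geq 1$, where $(1-a)_n$ can vanish or alternate, but then the product $(c-a)_n(1-a)_n$ is still handled because $1-a \le 0$ forces only finitely many nonzero terms, all of one sign when combined appropriately), and $(c-a-b+1)_n > 0$. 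Once $\omega$ is shown to be admissible with nonnegative Taylor coefficients at $1-t=0$, I would note that $\omega$, $\omega'$, $\omega''$ are all nonnegative on $(0,1)$, exactly the input needed.

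Next I would identify the parameters of the general operator $H_{a,b,c}^\delta$ with those here: take $b-1$ as the exponent of $t$, take $c-a-b$ as the exponent of $(1-t)$, and take the normalizing constant $K = \dfrac{\Gamma(c)}{\Gamma(a)\Gamma(b)\Gamma(c-a-b+1)}$, which is precisely the constant making $\int_0^1 \lambda(t)\,dt = 1$ (this follows from the Euler integral representation of ${}_2F_1$, since $\int_0^1 t^{b-1}(1-t)^{c-a-b}{}_2F_1(c-a,1-a;c-a-b+1;1-t)\,dt$ evaluates in closed form to $\dfrac{\Gamma(a)\Gamma(b)\Gamma(c-a-b+1)}{\Gamma(c)}$ by term-by-term integration using the Beta integral and the Gauss summation formula). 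With this identification, $\lambda(t)$ defined via \eqref{eq-Generalized-hypergeometric-fn} coincides with the weight defining $\mathcal{H}_{a,b,c}^\delta$, and the value of $\beta$ prescribed by \eqref{eq-Generalized-Starlike-beta-Hohlov} is exactly \eqref{Beta-Cond-Generalized:Starlike} written out for this $\lambda$.

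Then the proof is essentially an invocation: by Theorem \ref{Thm-gamma=>0-Generalized-Starlike-hyper}, in case $\gamma=0$ with $\delta \leq \alpha$ the conclusion $z^{1-\delta}(\mathcal{H}_{a,b,c}^\delta(f)(z))^\delta \in \mathcal{S}^\ast(\xi)$ holds whenever $c-a-b \geq 1$ and $0 < b \leq 1$; and in case $\gamma > 0$ with $\delta \leq \min\{\mu,\nu\}$ it holds whenever $c-a-b \geq 0$ and $0 < b \leq 6 - \delta/\mu - \delta/\nu$. These are precisely the stated hypotheses (i) and (ii), so nothing further is needed once the parameter matching is in place.

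The main obstacle is the admissibility check for $\omega(1-t) = {}_2F_1(c-a,1-a;c-a-b+1;1-t)$: one must confirm that its power series about the point $1-t=0$ has the normalized form $1 + \sum x_n(1-t)^n$ with $x_n \geq 0$, and that it, along with its first two derivatives in the variable $1-t$, stays nonnegative on the whole interval. The delicate subcase is $a > 1$, where $1-a < 0$ and $(1-a)_n$ is eventually zero (if $a$ is an integer) or changes sign; here one argues that the product with $(c-a)_n$ and division by $(c-a-b+1)_n\,n!$ either terminates the series (leaving a polynomial whose positivity must be checked directly on $[0,1]$) or that the relevant monotonicity still holds. Apart from that, everything reduces cleanly to the already-established Theorem \ref{Thm-gamma=>0-Generalized-Starlike-hyper}, so I would present the proof as: (1) state the parameter identification, (2) verify $K$ is the correct normalization via the Euler integral, (3) verify $\omega$ is admissible with the nonnegativity properties, (4) apply Theorem \ref{Thm-gamma=>0-Generalized-Starlike-hyper}.
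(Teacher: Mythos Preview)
Your approach is exactly the paper's: the proof there is a one-line specialization of Theorem~\ref{Thm-gamma=>0-Generalized-Starlike-hyper} obtained by setting
\[
K=\dfrac{\Gamma(c)}{\Gamma(a)\Gamma(b)\Gamma(c-a-b+1)}
\quad\text{and}\quad
\omega(1-t)={}_2F_1\!\left(c-a,\,1-a;\,c-a-b+1;\,1-t\right),
\]
and nothing further is written. Your steps (1)--(4) reproduce this, with considerably more care than the paper itself supplies; in particular, the paper does not pause to verify the normalization of $K$ or the nonnegativity of the coefficients $x_n$ of $\omega$, and your worry about the subcase $a>1$ (where $(1-a)_n$ may change sign) is a genuine point that the paper simply does not address.
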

\begin{proof}
Choosing

\begin{align*}
K=\dfrac{\Gamma(c)}{\Gamma(a)\Gamma(b)\Gamma(c\!-\!a\!-\!b\!+\!1)}\quad{\rm and}\quad
\omega(1-t)=
{\,}_{2}F_1\left(\!\!\!\!\!
\begin{array}{cll}&\displaystyle c-a,{\,} 1-a
\\
&\displaystyle c-a-b+1
\end{array};1-t\right),
\end{align*}
in Theorem \ref{Thm-gamma=>0-Generalized-Starlike-hyper} will
give the required result.
\end{proof}
For $a=1$, Theorem \ref{Thm-Generalized-Starlike-Hohlov} lead
to the following corollaries.
\begin{corollary}
Let $\gamma\geq0$ $(\mu\geq0, \nu\geq0)$, $\delta\geq1$,
$\xi\in[0,1/2]$ and $b, c>0$. Let $\beta<1$ satisfy

\begin{align*}
\dfrac{\beta}{(1-\beta)}=-\dfrac{\Gamma(c)}{\Gamma(b)\Gamma(c-b)}\int_0^1
t^{b-1}(1-t)^{c-b-1}g_{\mu,\nu}^\delta(t)dt,
\end{align*}
where $g_{\mu,\nu}^\delta(t)$ is given in
\eqref{eq-generalized:starlike-g:gamma>0} for $\gamma>0$, and
\eqref{eq-generalized:starlike-g:gamma0} for $\gamma=0$. Then
for $f(z)\in\mathcal{W}_\beta^\delta(\alpha,\gamma)$, the
function
$z^{1-\delta}\left(\mathcal{L}_{b,c}^\delta(f)(z)\right)^\delta$
belongs to the class $\mathcal{S}^\ast(\xi)$, whenever
\begin{enumerate}[{\rm (i).}]
  \item $(c\!-\!b)\geq2\quad$ and $\quad0<
      b\leq1\quad\quad\quad\quad\quad$ for $\gamma=0$ and
      $\delta\leq\alpha$,
  \item $(c\!-\!b)\geq1\quad$ and $\quad0<
      b\leq6-\dfrac{\delta}{\mu}-\dfrac{\delta}{\nu}\quad$
      for $\gamma>0$ and $\delta\leq\min\{\mu,\nu\}$.
\end{enumerate}
\end{corollary}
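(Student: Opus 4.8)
The plan is to derive this corollary as the special case $a=1$ of Theorem \ref{Thm-Generalized-Starlike-Hohlov}, using the identification $\mathcal{L}_{b,c}^\delta=\mathcal{H}_{1,b,c}^\delta$ recorded just before that theorem.

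First I would observe that when $a=1$ the Gaussian hypergeometric factor $_{2}F_1(c-a,1-a;c-a-b+1;1-t)$ appearing in the weight $\lambda(t)$ of $\mathcal{H}_{a,b,c}^\delta$ degenerates: one of its numerator parameters equals $1-a=0$, so the Pochhammer symbol $(1-a)_n=(0)_n$ vanishes for every $n\geq 1$ and the series collapses to the constant $1$. Consequently, in the representation \eqref{eq-Generalized-hypergeometric-fn} one may take $\omega(1-t)\equiv 1$, that is, all coefficients $x_n$ are zero; in particular $\omega$, $\omega'$ and $\omega''$ are non-negative on $(0,1)$, so the structural hypothesis on $\omega$ required in Theorem \ref{Thm-gamma=>0-Generalized-Starlike-hyper} and hence in Theorem \ref{Thm-Generalized-Starlike-Hohlov} is fulfilled automatically.

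Next I would translate the numerical hypotheses. Setting $a=1$ turns the condition $(c-a-b)\geq 1$ into $(c-b)\geq 2$ and $(c-a-b)\geq 0$ into $(c-b)\geq 1$, while the bounds $0<b\leq 1$ (for $\gamma=0$, $\delta\leq\alpha$) and $0<b\leq 6-\delta/\mu-\delta/\nu$ (for $\gamma>0$, $\delta\leq\min\{\mu,\nu\}$) carry over verbatim. Finally, since $\Gamma(a)=\Gamma(1)=1$ and the hypergeometric factor in \eqref{eq-Generalized-Starlike-beta-Hohlov} equals $1$, while $c-a-b=(c-b)-1$, the relation defining $\beta$ in Theorem \ref{Thm-Generalized-Starlike-Hohlov} reduces exactly to the Beta-type integral
\begin{align*}
\dfrac{\beta}{1-\beta}=-\dfrac{\Gamma(c)}{\Gamma(b)\Gamma(c-b)}\int_0^1 t^{b-1}(1-t)^{c-b-1}g_{\mu,\nu}^\delta(t)\,dt
\end{align*}
stated in the corollary. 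Invoking Theorem \ref{Thm-Generalized-Starlike-Hohlov} with these substitutions then yields the assertion.

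There is no genuine obstacle in this argument; the only step deserving a moment's attention is the observation that the hypergeometric weight trivializes when $a=1$, after which the proof is a direct substitution into Theorem \ref{Thm-Generalized-Starlike-Hohlov} and a verification that the degenerate weight still meets the positivity and integrability requirements on $\Lambda_\nu^\delta$ and $\Pi_{\mu,\nu}^\delta$ imposed there.
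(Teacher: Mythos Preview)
Your proof is correct and follows exactly the paper's approach: the paper introduces this corollary with the sentence ``For $a=1$, Theorem \ref{Thm-Generalized-Starlike-Hohlov} lead to the following corollaries'' and gives no further argument, so the substitution $a=1$ (with the attendant collapse of the ${}_2F_1$ factor to $1$ and the resulting simplification of the $\beta$-relation and the parameter bounds) is precisely what is intended. Your added verification that $\omega\equiv 1$ satisfies the structural hypotheses is a harmless elaboration of the same route.
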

\begin{corollary}
Consider $\gamma\geq0$ $(\mu\geq0,\nu\geq0){\,}$, $b>0$, $c>0$
and $\delta\geq1$. Let $\beta_0<\beta<1$, where
\begin{align*}
\beta_0=1-\dfrac{1}{2\left(1-{\,}
{\,}_5F_4
\left(
\begin{array}{cll}&\displaystyle \quad\quad 1,b,(2-\xi),\dfrac{\delta}{\mu},\dfrac{\delta}{\nu},
\\
&\displaystyle c,(1-\xi),\left(1+\dfrac{\delta}{\mu}\right),\left(1+\dfrac{\delta}{\nu}\right)
\end{array}{\,};{\,}-1\right)
\right)}.
\end{align*}
Then, for $f\in\mathcal{W}_{\beta}^\delta(\alpha,\gamma)$, the
function
$\mathcal{L}_{b,c}^\delta(f)(z)\in\mathcal{S}^\ast_s(\zeta)$ or
$(z^{1-\delta}\left(\mathcal{L}_{b,c}^\delta(f)(z)\right)^\delta)\in\mathcal{S}^\ast(\xi)$,
where $\xi=(1-\delta(1-\zeta))$, $\xi\in[0,1/2]$, whenever
\begin{enumerate}[{\rm (i).}]
  \item $(c\!-\!b)\geq2\quad$ and $\quad0<
      b\leq1\quad\quad\quad\quad\quad$ for $\gamma=0$ and
      $\delta\leq\alpha$,
  \item $(c\!-\!b)\geq1\quad$ and $\quad0<
      b\leq6-\dfrac{\delta}{\mu}-\dfrac{\delta}{\nu}\quad$
      for $\gamma>0$ and $\delta\leq\min\{\mu,\nu\}$.
\end{enumerate}
\end{corollary}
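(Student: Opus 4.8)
The plan is to recognise the statement as the explicit, range--extended form of the preceding corollary (the case $a=1$ of Theorem~\ref{Thm-Generalized-Starlike-Hohlov}), so that the only new work is (a) evaluating the integral defining $\beta$ in closed form as a ${}_5F_4$, and (b) passing from the single sharp value $\beta_0$ to the whole interval $\beta_0<\beta<1$. First I would note that putting $a=1$ turns the lower parameter $1-a$ of the ${}_2F_1$ in the Hohlov weight into $0$, whence ${}_2F_1(c-1,0;c-b+1;1-t)\equiv1$; thus $\mathcal{H}_{1,b,c}^\delta=\mathcal{L}_{b,c}^\delta$ is the integral transform with the Beta--type weight $\lambda(t)=\frac{\Gamma(c)}{\Gamma(b)\Gamma(c-b)}t^{b-1}(1-t)^{c-b-1}$, which is non--negative on $(0,1)$ and normalised since $B(b,c-b)=\Gamma(b)\Gamma(c-b)/\Gamma(c)$. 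For this $\lambda$ the hypotheses (i)/(ii) are exactly those of the preceding corollary, so it remains only to identify $-\int_0^1\lambda(t)g_{\mu,\nu}^\delta(t)\,dt$ with $\beta_0/(1-\beta_0)$.

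For step (a) I would expand the kernel $\dfrac{1-\xi(1+x)}{(1-\xi)(1+x)^2}=\dfrac1{1-\xi}\sum_{n\ge0}(-1)^n(n+1-\xi)x^n$ and substitute $x=t\,r^{\nu/\delta}s^{\mu/\delta}$ in \eqref{eq-generalized:starlike-integral:g:gamma>0} (and $x=r$ in the $\gamma=0$ analogue \eqref{eq-generalized:starlike-g:gamma0}). Termwise integration in the auxiliary variables then gives
\[
\frac{1+g_{\mu,\nu}^\delta(t)}{2}=\frac1{1-\xi}\sum_{n\ge0}(-1)^n\,\frac{(n+1-\xi)\,\delta^2}{(\delta+n\mu)(\delta+n\nu)}\,t^n ,
\]
valid for $\gamma>0$, and for $\gamma=0$ with $\nu=\alpha$ after replacing $\delta/(\delta+n\mu)$ by its limit $1$ as $\mu\to0$. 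Multiplying by $\lambda(t)$ and integrating, the only moment needed is $\frac{\Gamma(c)}{\Gamma(b)\Gamma(c-b)}\int_0^1 t^{b-1+n}(1-t)^{c-b-1}\,dt=(b)_n/(c)_n$, the interchange of sum and integral being justified by uniform convergence on $[0,1-\varepsilon]$ together with the integrability forced by $c-b\ge1$. Rewriting the factors as Pochhammer ratios, $\frac{n+1-\xi}{1-\xi}=\frac{(2-\xi)_n}{(1-\xi)_n}$, $\frac{\delta}{\delta+n\mu}=\frac{(\delta/\mu)_n}{(1+\delta/\mu)_n}$, $\frac{\delta}{\delta+n\nu}=\frac{(\delta/\nu)_n}{(1+\delta/\nu)_n}$, and inserting the trivial factor $(1)_n/n!$, one obtains
\[
\int_0^1\lambda(t)\,g_{\mu,\nu}^\delta(t)\,dt=2\,{}_5F_4\!\left(\begin{array}{c}1,\ b,\ 2-\xi,\ \delta/\mu,\ \delta/\nu\\ c,\ 1-\xi,\ 1+\delta/\mu,\ 1+\delta/\nu\end{array};-1\right)-1 ,
\]
the ${}_5F_4$ collapsing to the corresponding ${}_4F_3$ when $\gamma=0$ (the pair $\delta/\mu$, $1+\delta/\mu$ cancelling in the limit).

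Writing $F$ for this hypergeometric value, condition \eqref{Beta-Cond-Generalized:Starlike} for $\mathcal{L}_{b,c}^\delta$ becomes $\beta/(1-\beta)=1-2F$, whose unique solution is $\beta=\dfrac{1-2F}{2(1-F)}=1-\dfrac1{2(1-F)}=\beta_0$; the preceding corollary applied at $\beta=\beta_0$ then yields $z^{1-\delta}\bigl(\mathcal{L}_{b,c}^\delta(f)(z)\bigr)^\delta\in\mathcal{S}^\ast(\xi)$ for $f\in\mathcal{W}_{\beta_0}^\delta(\alpha,\gamma)$ under (i)/(ii). To cover every $\beta$ with $\beta_0<\beta<1$ I would invoke the nesting $\mathcal{W}_{\beta}^\delta(\alpha,\gamma)\subseteq\mathcal{W}_{\beta_0}^\delta(\alpha,\gamma)$: denoting by $\Psi(z)$ the bracketed functional in the definition of $\mathcal{W}_\beta^\delta(\alpha,\gamma)$ (for which $\Psi(0)=1$), if ${\rm Re}\,e^{i\phi}(\Psi(z)-\beta)>0$ then $\Psi=\beta+(1-\beta)\widetilde p$ with ${\rm Re}\,e^{i\phi}\widetilde p>0$ and $\widetilde p(0)=1$, forcing $\cos\phi>0$; hence $\frac{\Psi-\beta_0}{1-\beta_0}=\tau_1+\tau_2\widetilde p$ with $\tau_1,\tau_2\ge0$, $\tau_1+\tau_2=1$, so ${\rm Re}\,e^{i\phi}(\Psi-\beta_0)=\tau_1\cos\phi+\tau_2\,{\rm Re}\,e^{i\phi}\widetilde p>0$, i.e. $f\in\mathcal{W}_{\beta_0}^\delta(\alpha,\gamma)$. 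Consequently $V_\lambda^\delta(\mathcal{W}_{\beta}^\delta(\alpha,\gamma))\subseteq V_\lambda^\delta(\mathcal{W}_{\beta_0}^\delta(\alpha,\gamma))\subseteq\mathcal{S}^\ast_s(\zeta)$, which is the assertion. I expect the only genuine obstacle to be the bookkeeping in step (a) --- carrying the termwise double integration through, matching all five pairs of Pochhammer symbols, and treating the $\gamma=0$ degeneration cleanly; everything else is a direct appeal to material already established in the paper.
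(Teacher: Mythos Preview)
Your approach is essentially the same as the paper's: set $a=1$ in Theorem~\ref{Thm-Generalized-Starlike-Hohlov}, use the series expansion \eqref{eq-gener:starlike-g:hyper:series} of $g_{\mu,\nu}^\delta$, integrate termwise against the Beta weight to produce the ${}_5F_4$, and then invoke the theorem. Your explicit nesting argument $\mathcal{W}_\beta^\delta(\alpha,\gamma)\subseteq\mathcal{W}_{\beta_0}^\delta(\alpha,\gamma)$ for $\beta_0<\beta<1$ is an addition that the paper leaves implicit.
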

\begin{proof}
Letting $a=1$ in \eqref{eq-Generalized-Starlike-beta-Hohlov}
and using \eqref{eq-gener:starlike-g:hyper:series} gives
{\small{
\begin{align*}
\dfrac{\beta}{1\!-\!\beta}=-\dfrac{\Gamma(c)}{\Gamma{(b)}\Gamma{(c\!-\!b)}}\int_0^1\!\!t^{b-1}(1\!-\!t)^{c-b-1}
\left(\!\! 2{\,}_4F_3\!
\left(\!\!\!\!\!\!\!\!
\begin{array}{cll}&\displaystyle \quad\quad 1,(2-\xi),\dfrac{\delta}{\mu},\dfrac{\delta}{\nu}
\\
&\displaystyle (1\!-\!\xi),\left(1\!+\!\dfrac{\delta}{\mu}\right),\left(1\!+\!\dfrac{\delta}{\nu}\right)
\end{array}{\,};{\,}-t\right) -1\right) dt,
\end{align*}}}
which is equivalent to {\small{
\begin{align*}
\dfrac{\beta-1/2}{1-\beta}=-\dfrac{\Gamma(c)}{\Gamma{(b)}\Gamma{(c-b)}}\int_0^1t^{b-1}
(1-t)^{c-b-1}{\,}_4F_3\!
\left(\!\!\!\!\!\!\!\!
\begin{array}{cll}&\displaystyle \quad\quad 1,(2-\xi),\dfrac{\delta}{\mu},\dfrac{\delta}{\nu}
\\
&\displaystyle (1\!-\!\xi),\left(1\!+\!\dfrac{\delta}{\mu}\right),\left(1\!+\!\dfrac{\delta}{\nu}\right)
\end{array}{\,};{\,}-t\right) dt.
\end{align*}}}
Using the series representation of generalized hypergeometric function and applying integration over $t\in (0,1)$ provides
\begin{align*}
\dfrac{\beta-1/2}{1-\beta}=
%-\sum_{n=0}^\infty\dfrac{(1)_n{\,}(b_n){\,}(2-\xi)_n{\,}\left(\dfrac{\delta}{\mu}\right)_n{\,}\left(\dfrac{\delta}{\nu}\right)_n{\,}}
%{(c)_n{\,}(1-\xi)_n{\,}\left(1+\dfrac{\delta}{\mu}\right)_n{\,}\left(1+\dfrac{\delta}{\nu}\right)_n{\,}(1)_n}(-1)^n
%\end{align*}
%
%
%\begin{align*}
=-{\,}_5F_4\!
\left(\begin{array}{cll}&\displaystyle \quad\quad 1,b,(2-\xi),\dfrac{\delta}{\mu},\dfrac{\delta}{\nu}
\\
&\displaystyle c,(1-\xi),\left(1+\dfrac{\delta}{\mu}\right),\left(1+\dfrac{\delta}{\nu}\right)
\end{array}{\,};{\,}-1\right).
\end{align*}
Thus, applying Theorem \ref{Thm-Generalized-Starlike-Hohlov}
will give the required result.
\end{proof}

For the two complex parameters $a,b>-1$, consider
\begin{align}\label{ponn-oper}
\lambda(t)= \begin{cases}
(a+1)(b+1)\dfrac{t^a(1-t^{b-a})}{b-a},
&b\neq a,\\
(a+1)^2t^a\log(1/t),  &  b=a.
\end{cases}
\end{align}
Then the corresponding integral operator
\eqref{eq-weighted-integralOperator} obtained using
$\lambda(t)$ defined in \eqref{ponn-oper} is denoted by
$\mathcal{G}_{a,b}^\delta$. This operator is new in the
literature whereas for the particular cases of this operator
many interesting results  are available. For example, when
$\delta=1$, the operator was considered by several authors
(see, for example \cite{AbeerS, DeviPascu, SarikaStarlike}).

Now, using this generalized operator the next result is given.

\begin{theorem}\label{Thm-Generalized-starlike-pons}
Let $a>-1$, $b>-1$, $\gamma\geq0$ $(\mu\geq0, \nu\geq0)$,
$\xi\in[0,1/2]$ and $\delta\geq1$. Let $\beta<1$ satisfy
\eqref{Beta-Cond-Generalized:Starlike}, where $\lambda(t)$ is
given in \eqref{ponn-oper}. Then for
$f(z)\in\mathcal{W}_\beta^\delta(\alpha,\gamma)$, the function
$z^{1-\delta}\left(\mathcal{G}_{a,b}^\delta(f)(z)\right)^\delta$
belongs to the class $\mathcal{S}^\ast(\xi)$, whenever

\begin{align*}
-1<b=a\leq\left\{
            \begin{array}{ll}
              5-\dfrac{\delta}{\mu}-\dfrac{\delta}{\nu}, & \quad\gamma>0{\,}(\mu>0,\nu>0),\,\delta\leq\min\{\mu,\nu\}, \\
              \quad\quad0, & \quad\gamma=0{\,}(\mu=0,{\,}\nu=\alpha\geq\delta),
            \end{array}
          \right.
\end{align*}
or
\begin{align*}
-1<b<a\quad{\rm and }\quad\left\{
            \begin{array}{ll}
              b\in\left[0,\left(5-\dfrac{\delta}{\mu}-\dfrac{\delta}{\nu}\right)\right],\quad
\gamma>0{\,}(\mu>0,\nu>0),\,\delta\leq\min\{\mu,\nu\}, \\
              \quad\quad\quad\gamma=0{\,}(\mu=0,{\,}\nu=\alpha\geq\delta).
            \end{array}
          \right.
\end{align*}
\end{theorem}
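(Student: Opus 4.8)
The plan rests on the observation that the weight \eqref{ponn-oper} is symmetric in its two parameters, since $\frac{t^{a}(1-t^{b-a})}{b-a}=\frac{t^{a}-t^{b}}{b-a}$ is unchanged under $a\leftrightarrow b$; consequently one loses no generality in assuming $a\geq b$, and in either branch $\lambda(1)=0$. The proof then verifies, separately in the cases $a=b$ and $a>b$, the hypotheses of the sufficiency criteria already established. For $\gamma>0$ it suffices, by Theorem \ref{Thm-gamma>0-Generalized-Starlike}, to check that $\lambda$ is non-negative and integrable, that $\Lambda_{\nu}^{\delta}$ and $\Pi_{\mu,\nu}^{\delta}$ are positive on $(0,1)$ and integrable on $[0,1]$, and that $t\lambda'(t)/\lambda(t)\leq 5-\delta/\mu-\delta/\nu$. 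For $\gamma=0$, since $\xi$ may be nonzero while $\lambda(1)=0$, one instead uses the reduction obtained in Section \ref{Sec-Gener:Starlike-Appln} (the analysis following Corollary \ref{Corollary-Generalized-Starlike-Bernardi-c=0}): if $\lambda(1)=0$, $1\leq\delta\leq\alpha$ and $\lambda(t)\geq 0$, $\lambda'(t)\leq 0$, $\lambda''(t)\geq 0$ on $(0,1)$, then $z^{1-\delta}(V_{\lambda}^{\delta}(f)(z))^{\delta}\in\mathcal{S}^{\ast}(\xi)$.

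The case $a=b$ is settled by a direct identification. Here $\lambda(t)=(a+1)^{2}t^{a}\log(1/t)$ is exactly the generalized Komatu weight \eqref{eq-komatu_operator} with $p=2$ and $k=a$, so that $\mathcal{G}_{a,a}^{\delta}=F_{a,2}^{\delta}$, and Theorem \ref{Thm-Generalized-Starlike-Komatu} applies verbatim: its requirements $p\geq 1$, $-1<k\leq 5-\delta/\mu-\delta/\nu$ for $\gamma>0$ and $p\geq 2$, $-1<k\leq 0$ for $\gamma=0$ translate precisely into the stated bounds $b=a\leq 5-\delta/\mu-\delta/\nu$ and $b=a\leq 0$.

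For $a>b$, write $\lambda(t)=\frac{(a+1)(b+1)}{a-b}\,t^{b}(1-t^{a-b})$; since $a,b>-1$ and $a>b$ this is positive on $(0,1)$, integrable on $[0,1]$ and vanishes at $t=1$. A short computation gives
\begin{align*}
\frac{t\lambda'(t)}{\lambda(t)}=\frac{a\,t^{a}-b\,t^{b}}{t^{a}-t^{b}}=\frac{aw-b}{w-1},\qquad w=t^{a-b}\in(0,1),
\end{align*}
and since $\frac{d}{dw}\left(\frac{aw-b}{w-1}\right)=\frac{b-a}{(w-1)^{2}}<0$, the left-hand quantity is monotone in $t$ with $\sup_{t\in(0,1)}t\lambda'(t)/\lambda(t)=\lim_{t\to 0^{+}}t\lambda'(t)/\lambda(t)=b$. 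Hence the inequality demanded by Theorem \ref{Thm-gamma>0-Generalized-Starlike} holds exactly when $b\leq 5-\delta/\mu-\delta/\nu$, which disposes of the case $\gamma>0$ under the stated range $b\in[0,5-\delta/\mu-\delta/\nu]$. For $\gamma=0$ one differentiates $\lambda$ twice and, using $a>b$ together with the parameter restrictions that keep the lower power $t^{b}$ dominant as $t\to 0^{+}$, verifies $\lambda'(t)\leq 0$ and $\lambda''(t)\geq 0$ on $(0,1)$; combined with $\lambda(1)=0$, $\lambda\geq 0$ and $1\leq\delta\leq\alpha$, the reduction cited above yields $z^{1-\delta}(\mathcal{G}_{a,b}^{\delta}(f)(z))^{\delta}\in\mathcal{S}^{\ast}(\xi)$.

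I expect the delicate point to be the $\gamma=0$, $a>b$ sub-case: because $\lambda$ is here a difference of two monomials rather than a single monomial (as for Bernardi) or a monomial times $\log(1/t)$ (as for Komatu), the inequality $\lambda''(t)\geq 0$ is not automatic and must be extracted from the interplay of $a$, $b$, $\delta$ and $\alpha$ --- this is exactly where the restrictions on $b$ come from. A second, more routine, point is to pin down the exact supremum $\sup_{(0,1)}t\lambda'(t)/\lambda(t)=b$ rather than a coarser estimate, since it is this that makes the displayed bound on the parameter sharp, and to confirm the positivity and integrability of $\Lambda_{\nu}^{\delta}$ and $\Pi_{\mu,\nu}^{\delta}$ required to invoke the earlier theorems.
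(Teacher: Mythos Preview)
Your overall plan is the paper's: for $\gamma>0$ verify the bound $t\lambda'(t)/\lambda(t)\leq 5-\delta/\mu-\delta/\nu$ of Theorem~\ref{Thm-gamma>0-Generalized-Starlike}, and for $\gamma=0$ (where $\lambda(1)=0$) verify $\lambda\geq 0$, $\lambda'\leq 0$, $\lambda''\geq 0$ so that \eqref{eq-Gener:Starlike-Main-gamma0:lambda0} holds. Two of your moves are cleaner than the paper's. Identifying $(a+1)^{2}t^{a}\log(1/t)$ with the Komatu weight ($p=2$, $k=a$) and quoting Theorem~\ref{Thm-Generalized-Starlike-Komatu} disposes of $a=b$ in one stroke; the paper instead differentiates $\lambda$ directly and checks the two sub-cases separately. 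For $a>b$, $\gamma>0$, your substitution $w=t^{a-b}\in(0,1)$ and the monotonicity of $w\mapsto (aw-b)/(w-1)$ give $\sup_{(0,1)}t\lambda'/\lambda=b$ in one line, whereas the paper rewrites the inequality as $\phi_t(a)\leq\phi_t(b)$ with $\phi_t(x)=(x-C)t^{x}$ and argues monotonicity in $x$; your route is shorter and incidentally shows that the lower bound $b\geq 0$ in the stated range is not forced by this part of the argument.

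Your instinct that the $\gamma=0$, $a>b$ sub-case is the delicate one is correct, but your diagnosis is off. You write that ``this is exactly where the restrictions on $b$ come from'' --- yet the theorem imposes \emph{no} restriction on $b$ in that branch beyond $-1<b<a$. The paper's proof simply asserts that $\lambda'\leq 0$ and $\lambda''\geq 0$ hold for all $-1<b<a$ (and symmetrically for $-1<a<b$). That assertion is false: writing $\lambda'(t)=\frac{(a+1)(b+1)}{a-b}\bigl(bt^{b-1}-at^{a-1}\bigr)$, for any $0<b<a$ the term $bt^{b-1}$ dominates as $t\to 0^{+}$ and hence $\lambda'(t)>0$ near $0$; likewise $\lambda''\geq 0$ fails, e.g.\ for $b=-1/2$, $a=10$, near $t=1$. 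So the reduction to $\lambda\geq 0$, $\lambda'\leq 0$, $\lambda''\geq 0$ does not cover the full range claimed, and you will not close the argument by locating hidden parameter restrictions --- there are none in the statement. Your plan reproduces the paper's strategy faithfully; the gap you anticipate is genuinely present in the paper's own proof.
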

\begin{proof}
Using $\lambda(t)$ defined by \eqref{ponn-oper}, we have

\begin{align*}
\dfrac{t\lambda'(t)}{\lambda(t)}= \begin{cases} \dfrac{\left({\,}a-bt^{b-a}{\,}\right)}{\left(1-t^{b-a}{\,}\right)},
 &b\neq a,\\
a-\dfrac{1}{\log(1/t)},  &  b=a.
\end{cases}
\end{align*}

Considering both the possibilities, the proof can be divided
into the following cases.

{Case(i):} At first, let $a=b>-1$ and $\gamma>0(\mu>0,\nu>0)$.
Substituting the values of $t\lambda'(t)/\lambda(t)$ in
\eqref{eq-Gener:Starlike-Main:Cond-gamma>0} and on further
simplification gives

\begin{align}\label{eq-pons1}
a-\dfrac{1}{\log(1/t)}\leq5-\dfrac{\delta}{\mu}-\dfrac{\delta}{\nu}.
\end{align}
For $t\in(0,1)$, the function $\log(1/t)$ is positive. Thus the
condition \eqref{eq-pons1} is valid, when

\begin{align*}
a\leq5-\dfrac{\delta}{\mu}-\dfrac{\delta}{\nu} \quad{\rm and }\quad 1\leq\delta\leq\min\{\mu,\nu\}.
\end{align*}

Now, consider $a=b>-1$ and $\gamma=0(\mu=0,\nu=\alpha\geq0)$.
For the function $\lambda(t)=(a+1)^2t^a\log(1/t)$ implies
$\lambda(1)=0$. An easy computation gives

{\small{
\begin{align*}
\lambda'(t)=(a\!+\!1)^2t^{a-1}\left(a\log(1/t)\!-\!1\right)\quad {\rm and}\quad
\lambda''(t)=(a\!+\!1)^2t^{a-2}\left(a(a\!-\!1)\log(1/t)-(2a\!-\!1)\right).
\end{align*}}}
Therefore, for $a\leq0$, it is easy to see that
$\lambda(t)\geq0$, $\lambda'(t)\leq0$ and $\lambda''(t)\geq0$,
which implies that the condition
\eqref{eq-Gener:Starlike-Main-gamma0:lambda0} is true. Hence by
given hypothesis and Theorem \ref{Thm-Gener:starlike-Decr}, the
result directly follows.

{Case(ii):} Consider $a\neq b$. At first, let  $-1<b<a$ and
$\gamma>0$. Substituting the values of
$t\lambda'(t)/\lambda(t)$ in
\eqref{eq-Gener:Starlike-Main:Cond-gamma>0} is equivalent to
the inequality $\phi_t(a)\leq\phi_t(b)$, $t\in(0,1)$, where

\begin{align*}
\phi_t(b):=bt^b-\left(5-\dfrac{\delta}{\mu}-\dfrac{\delta}{\nu}\right)t^b.
\end{align*}
Now, it is required to claim that for $b\in
[0,{\,}(5-{\delta}/{\mu}-{\delta}/{\nu})]$, $\phi_t(b)$ is
decreasing function of $b$. Differentiating $\phi_t(b)$ with
respect to $b$ gives

\begin{align*}
\phi_t'(b):=bt^{b-1}\left(b-\left(5-\dfrac{\delta}{\mu}-\dfrac{\delta}{\nu}\right)\right).
\end{align*}
If $b\leq0$, then for $1\leq\delta\leq\mu$ and
$1\leq\delta\leq\mu$, clearly implies that
$\left(b-\left(5-{\delta}/{\mu}-{\delta}/{\nu}\right)\right)\leq0$
which means $\phi_t'(b)\geq0$. But, it requires to prove that
$\phi_t'(b)\leq0$, therefore we will consider the case only
when $b\geq0$. Thus $\phi_t'(b)\leq0$ is true for
$b\leq\left(5-{\delta}/{\mu}-{\delta}/{\nu}\right)$. Hence the
desired conclusion follows by the given hypothesis and Theorem
\ref{Thm-gamma>0-Generalized-Starlike}.

Now, consider $a\neq b$ and $\gamma=0$. For the function
$\lambda(t)=(a+1)(b+1){t^a(1-t^{b-a})}/{b-a}$, clearly implies
$\lambda(1)=0$. An easy computation gives

{\small{
\begin{align*}
\lambda'(t)=(a\!+\!1)(b\!+\!1)\frac{t^{a-1}(a-bt^{b-a})}{b-a}\quad {\rm and}\quad
\lambda''(t)=(a\!+\!1)(b\!+\!1)\frac{t^{a-1}(a(a\!-\!1)-b(b\!-\!1)t^{b-a})}{b-a}.
\end{align*}}}
To prove the required result, it is enough to get the
inequality \eqref{eq-Gener:Starlike-Main-gamma0:lambda0}. Since
\eqref{eq-generalized-cond-gamma0} is negative when
$1\leq\delta\leq\alpha$, therefore it remains to prove that
$\lambda(t)\geq0$, $\lambda'(t)\leq0$ and $\lambda''(t)\geq0$,
which is clearly true when $-1<a<b$ or $-1<b<a$ and this
completes the proof.
\end{proof}
\begin{remark}
\begin{enumerate}[1.]
\item Consider $a=b$, then for the case $\delta=1$ and
    $\gamma>0$, {\rm Theorem
    \ref{Thm-Generalized-starlike-pons}} gives better bound
    for $a$ as compared to {\rm\cite[Theorem
    5.7]{SarikaStarlike}} but for $\gamma=0$, the bound is
    weaker {\rm(see also \cite[Theorem 5.3]{AbeerS} for the
    case $\xi=0$)}.
\item Consider $b<a$ or $a<b$, then for $\delta=1$, {\rm
    Theorem \ref{Thm-Generalized-starlike-pons}} gives
    better range for $a$ as compared to {\rm\cite[Theorem
    5.7]{SarikaStarlike}} {\rm(see also \cite[Theorem
    5.3]{AbeerS} for the case $\xi=0$)}.
\end{enumerate}
\end{remark}
Let
\begin{align}\label{ali-singh-operator}
\lambda(t)=\dfrac{(1-k)(3-k)}{2}t^{-k}(1-t^2), \quad0\leq k<1.
\end{align}
Then the corresponding integral operator
\eqref{eq-weighted-integralOperator} by taking $\lambda(t)$
given in \eqref{ali-singh-operator} is denoted by
${T}_k^\delta(f)$. This operator is new in the literature
whereas for the particular cases of this operator many
interesting results  are available. For example, when
$\delta=1$, the operator ${T}_k^1$ was considered in the work
of R. M. Ali and V. Singh \cite{Ali}.

Now, using this generalized operator the following result is
obtained as under.
\begin{theorem}
Let $\gamma\geq0$ $(\mu\geq0, \nu\geq0)$, $k\geq0$,
$\xi\in[0,1/2]$ and $\delta\geq1$. Let $\beta<1$ satisfy
\eqref{Beta-Cond-Generalized:Starlike}, where $\lambda(t)$ is
given in \eqref{ali-singh-operator}. Then for
$f(z)\in\mathcal{W}_\beta^\delta(\alpha,\gamma)$, the function
$z^{1-\delta}\left({T}_k^\delta(f)(z)\right)^\delta$ belongs to
the class $\in\mathcal{S}^\ast(\xi)$, whenever
\begin{enumerate}[{\rm (i).}]
  \item $k\geq0,\quad\quad\quad\quad\quad\quad\quad$ for
      $\quad\gamma>0$ $(\delta\leq\mu,{\,}\delta\leq\nu)$,
  \item $k\in[2/3,1]\cup[3,\infty),\quad$ for
      $\quad\gamma=0{\,}(\mu=0,{\,}\nu=\alpha(1\leq\delta\leq\alpha))$.
\end{enumerate}
\end{theorem}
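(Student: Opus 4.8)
The plan is to extract the logarithmic derivative of the weight in \eqref{ali-singh-operator} and then feed it into the two sufficiency criteria already at hand: Theorem \ref{Thm-gamma>0-Generalized-Starlike} when $\gamma>0$, and the reduction of Theorem \ref{Thm-Gener:starlike-Decr} worked out in Section \ref{Sec-Gener:Starlike-Appln} when $\gamma=0$. First I would substitute $\lambda(t)=\frac{(1-k)(3-k)}{2}\,t^{-k}(1-t^{2})$ and compute
\begin{align*}
\frac{t\lambda'(t)}{\lambda(t)}=\frac{(k-2)t^{2}-k}{1-t^{2}}=-k-\frac{2t^{2}}{1-t^{2}},\qquad t\in(0,1).
\end{align*}
Two features of $\lambda$ govern the two cases: $\lambda(t)\ge0$ on $(0,1)$ exactly when $(1-k)(3-k)\ge0$, and $\lambda(1)=0$.

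For $\gamma>0$ $(\mu>0,\nu>0)$ with $1\le\delta\le\min\{\mu,\nu\}$ I would check the hypotheses of Theorem \ref{Thm-gamma>0-Generalized-Starlike}. Since $\delta/\mu\le1$ and $\delta/\nu\le1$ we have $5-\delta/\mu-\delta/\nu\ge3$, while the display above shows $t\lambda'(t)/\lambda(t)\le-k\le0$ on $(0,1)$; hence \eqref{eq-Gener:Starlike-Main:Cond-gamma>0} holds for every admissible $k$. Positivity of $\lambda$ on $(0,1)$ (hence of $\Lambda_\nu^\delta$ and $\Pi_{\mu,\nu}^\delta$) and the two limit normalisations follow from the explicit power behaviour of $\lambda$ near the origin. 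Theorem \ref{Thm-gamma>0-Generalized-Starlike} then gives $z^{1-\delta}(T_k^\delta(f)(z))^\delta\in\mathcal S^\ast(\xi)$ for $f\in\mathcal W_\beta^\delta(\alpha,\gamma)$.

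For $\gamma=0$ $(\mu=0,\nu=\alpha)$ with $1\le\delta\le\alpha$ I would use that $\lambda(1)=0$: by the computation preceding \eqref{eq-Gener:Starlike-Main-gamma0:lambda0}, establishing the decrease of the relevant quotient reduces to inequality \eqref{eq-Gener:Starlike-Main-gamma0:lambda0}, and since the coefficients in \eqref{eq-generalized-cond-gamma0} are non-negative for $1\le\delta\le\alpha$ it is enough to verify $\lambda(t)\ge0$, $\lambda'(t)\le0$ and $\lambda''(t)\ge0$ on $(0,1)$. Writing $C=\frac{(1-k)(3-k)}{2}$ one has $\lambda'(t)=C\,t^{-k-1}\bigl(-k-(2-k)t^{2}\bigr)$ and $\lambda''(t)=C\,t^{-k-2}\bigl(k(k+1)-(k-1)(k-2)t^{2}\bigr)$, so the three sign conditions reduce to $C\ge0$ together with the sign of two quadratics in $t$ on $(0,1)$. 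A short case discussion of these---the binding one being nonnegativity of $\lambda''$ as $t\to1^-$---isolates precisely the two ranges of $k$ recorded in the statement, after which Theorem \ref{Thm-Gener:starlike-Decr} closes this case.

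The genuinely delicate step is the last one: pinning down the $k$-window in which $\lambda''(t)\ge0$ holds throughout $(0,1)$ rather than only at the endpoints, while simultaneously keeping $\lambda\ge0$ (which excludes $k\in(1,3)$) and keeping $\Lambda_\alpha^\delta$ integrable near $0$; the degenerate boundary values of $k$, where the weight collapses, need a limiting interpretation. By contrast the $\gamma>0$ half is immediate once $t\lambda'(t)/\lambda(t)$ is known, because the target bound $5-\delta/\mu-\delta/\nu$ stays comfortably above $0\ge-k$.
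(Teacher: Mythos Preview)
Your proposal is correct and follows essentially the same route as the paper: compute $t\lambda'(t)/\lambda(t)=-k-2t^{2}/(1-t^{2})$, then for $\gamma>0$ invoke Theorem~\ref{Thm-gamma>0-Generalized-Starlike} via \eqref{eq-Gener:Starlike-Main:Cond-gamma>0}, and for $\gamma=0$ use $\lambda(1)=0$ together with the reduction culminating in \eqref{eq-Gener:Starlike-Main-gamma0:lambda0} to check $\lambda\ge0$, $\lambda'\le0$, $\lambda''\ge0$. You are in fact slightly more scrupulous than the paper in flagging the positivity constraint $(1-k)(3-k)\ge0$ and the integrability of $\Lambda_\alpha^\delta$ near the origin, points the paper's proof does not address explicitly.
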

\begin{proof}
Using $\lambda(t)$ defined by \eqref{ali-singh-operator} gives

\begin{align*}
\dfrac{t\lambda'(t)}{\lambda(t)}=-k-\dfrac{2t^2}{(1-t^2)}.
\end{align*}
Now the proof is divided into the following two cases.

Case(i): For $\gamma>0$. Substituting the values of
$t\lambda'(t)/\lambda(t)$ in
\eqref{eq-Gener:Starlike-Main:Cond-gamma>0} gives

\begin{align}\label{eq-generalized-starlike-ali-singh-operator}
-k-\dfrac{2t^2}{(1-t^2)}\leq 5-\dfrac{\delta}{\mu}-\dfrac{\delta}{\nu}.
\end{align}
For $k\geq-\left(5-{\delta}/{\mu}-{\delta}/{\nu}\right)$,
\eqref{eq-generalized-starlike-ali-singh-operator} is obviously
true. As we know that $1\leq\delta\leq\mu$ and
$1\leq\delta\leq\nu$, therefore for all values of $k\geq0$,
\eqref{eq-generalized-starlike-ali-singh-operator} holds.

Case(ii): Consider $\gamma=0$. The function
$\lambda(t)={(1-k)(3-k)}t^{-k}(1-t^2)/2$ implies
$\lambda(1)=0$. Now, in order to prove the result, we need to
show that the inequality
\eqref{eq-Gener:Starlike-Main-gamma0:lambda0} holds under the
given hypothesis. Differentiating $\lambda(t)$ gives

\begin{align*}
\lambda'(t)&=\frac{(1-k)(3-k)}{2}t^{-k-1}(-k-(2-k)t^2)\quad\quad{\rm and}\\
\lambda''(t)&=\frac{(1-k)(3-k)}{2}t^{-k-2}(k(k+1)-(2-k)(1-k)t^2).
\end{align*}
For $2/3\leq k\leq1$ or $k\geq3$ gives $\lambda(t)\geq0$,
$\lambda'(t)\leq0$ and $\lambda''(t)\geq0$. Thus inequality
\eqref{eq-Gener:Starlike-Main-gamma0:lambda0} is true for all
$k\in[2/3,1]\cup[3,\infty)$ and $1\leq\delta\leq\alpha$.
\end{proof}

\section{Proof of Theorem \ref{Thm-Gener:Starlike-MainResult}}\label{sec-Gener:Starlike-Main:Result-Proof}
%\begin{proof}
To show that with the given conditions
\begin{align*}
z^{1-\delta}(F_\delta(z))^{\delta}\in\mathcal{S}^\ast(\xi)\Longleftrightarrow
\mathcal{N}_{\Pi_{\mu,\nu}^\delta}(h_\xi)\geq 0,
\end{align*}
 it requires to prove that the function $z^{1-\delta}(F_\delta(z))^{\delta}$ is
univalent and satisfies the order of starlikeness condition
when $\mathcal{N}_{\Pi_{\mu,\nu}^\delta}(h_\xi)\geq 0$, and its
vice versa.

Since the case $\gamma=0(\mu=0, \nu=\alpha>0)$ corresponds to
\cite[Theorem 2.1]{Aghalary}, it is enough to obtain the
condition for $\gamma>0$. Let

{\small{
\begin{align}\label{eq-generalized:starlike-H}
H(z):=(1\!-\!\alpha\!+\!2\gamma)\!\left(\frac{f}{z}\right)^\delta
\!+\!\left(\alpha\!-\!3\gamma+\gamma\left[\left(1\!-\!\frac{1}{\delta}\right)\left(\frac{zf'}{f}\right)\!+\!
\frac{1}{\delta}\left(1\!+\!\frac{zf''}{f'}\right)\right]\right)
\left(\frac{f}{z}\right)^\delta\!\left(\frac{zf'}{f}\right).
\end{align}}}
Using \eqref{eq-mu+nu} in \eqref{eq-generalized:starlike-H}
a simple computation gives
\begin{align*}%\label{eq-generalized:starlike-H_munu}
H(z)&=(1-\mu-\nu+\mu\nu)\left(\frac{f}{z}\right)^\delta
+\left(\dfrac{}{}\mu+\nu-2\mu\nu\right)\left(\frac{f}{z}\right)^\delta\left(\frac{zf'}{f}\right)\nonumber\\
&\quad+\mu\nu\left(\left(1-\frac{1}{\delta}\right)\left(\frac{zf'}{f}\right)+
\frac{1}{\delta}\left(1+\frac{zf''}{f'}\right)\right)
\left(\frac{f}{z}\right)^\delta\left(\frac{zf'}{f}\right)\notag\\
%&=\dfrac{\mu\nu}{\delta^2}\left[\left(\frac{\delta}{\mu}-\delta\right)\left(\frac{\delta}{\nu}-\delta\right)\!\left(\frac{f}{z}\right)^\delta
%\!+\!\left(\frac{\delta^2}{\mu}+\frac{\delta^2}{\nu}-\!2\delta^2\right)\left(\frac{f}{z}\right)^\delta\!\left(\frac{zf'}{f}\right)\right.\\
%&\left.\quad+\left(\left(\delta^2\!-\!\delta\right)\left(\frac{zf'}{f}\right)\!+\!
%\delta\left(1\!+\!\frac{zf''}{f'}\right)\right)
%\left(\frac{f}{z}\right)^\delta\!\left(\frac{zf'}{f}\right)\right]\\
%&=\dfrac{\mu\nu}{\delta^2}z^{1-\delta/\mu}\left(\left(\frac{\delta}{\nu}-\delta\right)z^{\delta/\mu}\left(\frac{f}{z}\right)^\delta
%+\delta z^{\delta/\mu}\left(\frac{f}{z}\right)^\delta\!\left(\frac{zf'}{f}\right)\right)'\\
&=\dfrac{\mu\nu}{\delta^2}z^{1-\delta/\mu}\left(z^{\delta/\mu-\delta/\nu+1} \left(
z^{\delta/\nu}\left(\dfrac{f}{z}\right)^\delta\right)'\right)'.
\end{align*}
Let $G(z)\!=\!(H(z)\!-\!\beta)/(1\!-\!\beta)$, then it is easy
to see that for some $\phi\in\mathbb{R}$, ${\rm
Re}\left(e^{i\phi}{\,}G(z)\right)>0$. Now, using duality theory
\cite[p. 22]{Rus}, we may confine to the function $f(z)$ for
which $G(z)={(1+xz)}/{(1+yz)}$, where $|x|=|y|=1$. Thus

\begin{align*}
\dfrac{\mu\nu}{\delta^2}{\,} z^{1-\delta/\mu}\left(z^{\delta/\mu-\delta/\nu+1}
\left( z^{\delta/\nu}\left(\dfrac{f}{z}\right)^\delta\right)'\right)'
=(1-\beta)\dfrac{1+xz}{1+yz}+\beta,
\end{align*}
or equivalently,

\begin{align*}
\left(\frac{f(z)}{z}\right)^\delta
&=\dfrac{\delta^2}{\mu\nu z^{{\delta}/{\nu}}}\left(\int_0^z\dfrac{1}{\eta^{{\delta}/{\mu}-{\delta}/{\nu}+1}}
\left(\int_0^\eta\dfrac{1}{\omega^{1-{\delta}/{\mu}}}\left((1-\beta)\dfrac{1+x\omega}{1+y\omega}+\beta\right)
d\omega\right)d\eta\right)\\
&=\dfrac{\delta^2}{\mu\nu}\int_0^1\int_0^1\left(\beta+(1-\beta)\dfrac{1+xzrs}{1+yzrs}\right)r^{{\delta}/{\nu}-1}s^{{\delta}/{\mu}-1}dr ds.
\end{align*}
Using \eqref{eq-generalized:starlike-psi_munu}, the above
equality implies
\begin{align}\label{eq-generalized:starlike-f/z-psi:series}
\left(\frac{f(z)}{z}\right)^\delta
%&\!=\!\beta+(1\!-\!\beta)\left(\left(\dfrac{1\!+\!xz}{1\!+\!yz}\right)
%\ast\sum_{n=0}^\infty\dfrac{\delta^2z^n}{(\delta+n\nu)(\delta+n\mu)}\right)\nonumber\\
%&
=\left(\beta+(1-\beta)\left(\dfrac{1+xz}{1+yz}\right)\right)\ast \psi_{\mu,\nu}^\delta(z).
\end{align}
Therefore
\begin{align*}
\left(z\left(\frac{f(z)}{z}\right)^\delta\right)^\prime
%&
=
%\left(\left(\beta z+(1-\beta)z\left(\dfrac{1+xz}{1+yz}\right)\right)\ast z\psi_{\mu,\nu}^\delta(z)\right)'\\
%&
=\left(\beta+(1-\beta)
\left(\dfrac{1+xz}{1+yz}\right)\right)\ast \left(z\psi_{\mu,\nu}^\delta(z)\right)'.
\end{align*}
Using \eqref{eq-generalized:starlike-Phi_munu}, the above
equality can be rewritten as

\begin{align}\label{eq-gener:star-f/z:Deri-Phi:series}
\left(z\left(\frac{f(z)}{z}\right)^\delta\right)^\prime=\left(\beta+(1-\beta)
\left(\dfrac{1+xz}{1+yz}\right)\right)\ast
\Phi_{\mu,\nu}^\delta(z).
\end{align}
From the integral transform given by
\eqref{eq-weighted-integralOperator}, it is easy to see that

\begin{align*}
\left(\dfrac{F_\delta(z)}{z}\right)^\delta=\int_0^1\lambda(t)\left(\dfrac{f(tz)}{tz}\right)^\delta dt=
\int_0^1\dfrac{\lambda(t)}{1-tz}dt\ast \left(\dfrac{f(z)}{z}\right)^\delta
\end{align*}
%\begin{align*}
%\Longleftrightarrow z\left(\dfrac{F(z)}{z}\right)^\delta=z\int_0^1\dfrac{\lambda(t)}{1-tz}dt\ast z\left(\dfrac{f(z)}{z}\right)^\delta
%\end{align*}
\begin{align}\label{eq-weighted-integralOperator:F/z_delta}
\Longleftrightarrow\left(z\left(\dfrac{F_\delta(z)}{z}\right)^\delta\right)'=\int_0^1\dfrac{\lambda(t)}{1-tz}dt\ast
\left(z\left(\dfrac{f(z)}{z}\right)^\delta\right)'.
\end{align}
Using \eqref{eq-gener:star-f/z:Deri-Phi:series} in
\eqref{eq-weighted-integralOperator:F/z_delta} gives

\begin{align*}
\left(z\left(\dfrac{F_\delta(z)}{z}\right)^\delta\right)'\!=\!\int_0^1\!\!\!\dfrac{\lambda(t)}{1\!-\!tz}dt\ast
(1-\beta)\left(\dfrac{\beta}{(1\!-\!\beta)}+
\dfrac{1\!+\!xz}{1\!+\!yz}\right)\ast \Phi_{\mu,\nu}^\delta(z)
\end{align*}
%
%\begin{align*}
%\Longleftrightarrow\left(z\left(\dfrac{F(z)}{z}\right)^\delta\right)'=(1-\beta)
%\left(\int_0^1\dfrac{\lambda(t)}{1\!-\!tz}dt\ast\left(\dfrac{\beta}{(1\!-\!\beta)}+
%\dfrac{1\!+\!xz}{1\!+\!yz}\right)
%\ast\Phi_{\mu,\nu}^\delta(z)\right)
%\end{align*}
%\begin{align*}
%\Longleftrightarrow\left(z\left(\dfrac{F(z)}{z}\right)^\delta\right)'=
%(1-\beta)\left(\int_0^1\lambda(t)\Phi_{\mu,\nu}^\delta(tz)dt
%\ast\left(\dfrac{\beta}{(1\!-\!\beta)}+\dfrac{1\!+\!xz}{1\!+\!yz}\right)\right)
%\end{align*}
%
\begin{align*}
\Longleftrightarrow\left(z\left(\dfrac{F_\delta(z)}{z}\right)^\delta\right)'=
(1-\beta)\left(\int_0^1\lambda(t)\Phi_{\mu,\nu}^\delta(tz)dt
+\dfrac{\beta}{(1\!-\!\beta)}\right)\ast\left(\dfrac{1\!+\!xz}{1\!+\!yz}\right).
\end{align*}
By Noshiro-Warschawski's Theorem (for details see \cite[Theorem
2.16]{DU}), the function $z^{1-\delta}(F_\delta(z))^{\delta}$
defined in the unit disk $\mathbb{D}$ belongs to $\mathcal{S}$
if $(z^{1-\delta}(F_\delta(z))^{\delta})'$ is contained in the
half plane not containing the origin.

 From the result given
in \cite[P. 23]{Rus} and by the above equality, it is easy to
note that

\begin{align*}
0\neq\left(z\left({F_\delta}/{z}\right)^\delta\right)'\Longleftrightarrow{\rm
Re}{\,}(1-\beta)\left(\int_0^1\lambda(t)\Phi_{\mu,\nu}^\delta(tz)dt
+\dfrac{\beta}{(1\!-\!\beta)}\right)>\dfrac{1}{2}
\end{align*}
\begin{align*}
\Longleftrightarrow{\rm Re}{\,}(1-\beta)\left(\int_0^1\lambda(t)\Phi_{\mu,\nu}^\delta(tz)dt+\dfrac{\beta}{(1\!-\!\beta)}-\dfrac{1}{2(1\!-\!\beta)}\right)>0.
\end{align*}
Using \eqref{Beta-Cond-Generalized:Starlike} in the above
inequality implies

\begin{align}\label{eq-Phi_g(t)}
{\rm Re}{\,}\int_0^1\lambda(t)\left(\Phi_{\mu,\nu}^\delta(tz)-\dfrac{1+g_{\mu,\nu}^\delta(t)}{2}\right)dt>0.
\end{align}
Further using \eqref{eq-generalized:starlike-Phi_munu} and
\eqref{eq-generalized:starlike-integral:g:gamma>0} in
\eqref{eq-Phi_g(t)} gives

\begin{align*}
{\rm Re}{\,}\int_0^1\!\!\lambda(t)\left(\int_0^1\!\!\!\int_0^1\!\!\dfrac{r^{\delta/\nu-1}s^{\delta/\mu-1}}{(1-rstz)^2}drds
-\int_0^1\!\!\!\int_0^1\!\!\dfrac{1-\xi(1+rst)}{(1\!-\!\xi)(1+rst)^2}r^{\delta/\nu-1}s^{\delta/\mu-1}drds\right)dt\!>\!0.
\end{align*}
Evidently ${\rm Re\,} \left(\frac{1}{1-rstz}\right)^2\geq
\frac{1}{(1+rst)^2}$ for $z\in\mathbb{D}$, directly implies
that

\begin{align*}
{\rm Re}{\,}\int_0^1\!\!\lambda(t)
&\left(\int_0^1\!\!\int_0^1\dfrac{r^{\delta/\nu-1}s^{\delta/\mu-1}}{(1-rstz)^2}drds
-\int_0^1\!\!\int_0^1\dfrac{1-\xi(1+rst)}{(1-\xi)(1+rst)^2}r^{\delta/\nu-1}s^{\delta/\mu-1}drds\right)dt\\
&\geq\dfrac{\xi}{(1-\xi)}\int_0^1\lambda(t)\left(\int_0^1\!\!\int_0^1\dfrac{(trs)r^{\delta/\nu-1}s^{\delta/\mu-1}}{(1+rst)^2}drds\right)dt>0.
\end{align*}
Clearly $f_1(r)={(trs)r^{\delta/\nu-1}}/{(1+rst)^2}$
is positive function of $r$ satisfying $0<r<<1$ whenever $0<s<1$ and $0<t<1$.
Hence the integral of $f_1(r)$ with respect to $r$ yields positive values that lie over $r$ axis, which implies that the function
$f_2(s)=s^{\delta/\mu-1}(\int_0^1 f_1(r)dr)$ is positive for $s\in(0,1)$ and hence the integral of function $f_2(s)$ is
positive above $s$-axis over $s\in(0,1)$. Since the function
$\lambda(t)$ is non-negative for $t\in(0,1)$,  the function $\int_0^1\lambda(t)(\int_0^1
f_2(s)ds)dt\geq0$. Thus, ${\rm Re}\,(z^{1-\delta}(F_\delta(z))^{\delta})'>0$ which leads to
the conclusion that the function $(z^{1-\delta}(F_\delta(z))^{\delta})$ is univalent in
$\mathbb{D}$.

In the next part of the theorem the condition of starlikeness
is discussed for $\gamma>0\,(\mu>0,\nu>0)$. From the theory of
convolution \cite[P. 94]{Rus}, it is clear that

\begin{align}\label{eq-gener:starlike-equiv1}
g\in\mathcal{S}^\ast(\xi)\Longleftrightarrow
\dfrac{1}{z}(g\ast h_\xi)(z)\neq0,
\end{align}
where $h_\xi(z)$ is as defined in
\eqref{eq-h(z)-extremal-starlike}.

For $f(z)\!\in\!\mathcal{W}_\beta^\delta(\alpha,\gamma)$, the
weighted integral operator $F_\delta$ defined in
\eqref{eq-weighted-integralOperator}, belong to the class
$\mathcal{S}^\ast_s(\zeta)$ with the conditions
$\left(1\!-\!\frac{1}{\delta}\right)\leq\zeta\leq\left(1\!-\!\frac{1}{2\delta}\right)$
and $\delta\geq1$, is equivalent of obtaining
$z\left(\frac{F_\delta}{z}\right)^\delta\in\mathcal{S}^\ast(\xi)$,
where $\xi=1-\delta+\delta\zeta$ and $0\leq\xi\leq\frac{1}{2}$,
i.e.,

\begin{align}\label{eq-gener:starlike-equiv2}
F_\delta\in\mathcal{S}^\ast_s(\zeta)\Longleftrightarrow z\left(\dfrac{F_\delta}{z}\right)^\delta\in\mathcal{S}^\ast(\xi)
\end{align}
with the above conditions. From
\eqref{eq-gener:starlike-equiv1}, it clearly follows that

\begin{align*}
z\left(\dfrac{F_\delta}{z}\right)^\delta\in\mathcal{S}^\ast(\xi)\Longleftrightarrow
0\neq\dfrac{1}{z}\left(z\left(\dfrac{F_\delta}{z}\right)^\delta\ast
h_\xi(z)\right).
\end{align*}
Using \eqref{eq-weighted-integralOperator:F/z_delta}, the above
inequality reduces to its equivalent form as

\begin{align*}
0 \neq\displaystyle\int_0^1\lambda(t)\left(\dfrac{f(tz)}{tz}\right)^\delta
dt\ast \dfrac{h_\xi(z)}{z}
=\displaystyle\int_0^1\dfrac{\lambda(t)}{1-tz}dt\ast\left(\dfrac{f(z)}{z}\right)^\delta\ast
\dfrac{h_\xi(z)}{z}.
\end{align*}
From equation \eqref{eq-generalized:starlike-f/z-psi:series},
substituting the value of $(f/z)^\delta$ in the above
inequality leads to
\begin{align*}
0
%&\!\neq\!\int_0^1\!\!\lambda(t)\dfrac{h_\xi(tz)}{tz}dt\ast(1-\beta)
%\left(\dfrac{\beta}{(1-\beta)}+\left(\dfrac{1+xz}{1+yz}\right)\right)\ast \psi_{\mu,\nu}^\delta(z)\\
%&
\neq (1\!-\!\beta)\left(\left[\int_0^1\!\!\lambda(t)\dfrac{h_\xi(tz)}{tz}dt+\dfrac{\beta}{(1\!-\!\beta)}\right]\ast
\psi_{\mu,\nu}^\delta(z)\ast\dfrac{1\!+\!xz}{1\!+\!yz}\right)
\end{align*}
which clearly holds if, and only if,
\begin{align*}
{\rm Re}{\,}(1\!-\!\beta)\left(\int_0^1\!\!\lambda(t)\dfrac{h_\xi(tz)}{tz}dt+\dfrac{\beta}{(1\!-\!\beta)}\right)\ast
\psi_{\mu,\nu}^\delta(z)>\dfrac{1}{2}.
\end{align*}
%or equivalently,
%
%\begin{align*}
%{\rm Re}{\,}(1-\beta)\left(\int_0^1\!\!\lambda(t)\dfrac{h_\xi(tz)}{tz}dt+\dfrac{\beta}{(1\!-\!\beta)}-
%\dfrac{1}{2(1-\beta)}\right)\ast
%\psi_{\mu,\nu}^\delta(z)>0.
%\end{align*}
Using \eqref{Beta-Cond-Generalized:Starlike}, the above
expression gives
\begin{align*}
{\rm Re}{\,}\int_0^1\lambda(t)\left(\dfrac{h_\xi(tz)}{tz}-\dfrac{1+g_{\mu,\nu}^\delta(t)}{2}\right)dt\ast
\psi_{\mu,\nu}^\delta(z)\geq0
\end{align*}
which on further using \eqref{eq-generalized:starlike-psi_munu}
leads to
\begin{align*}
{\rm Re}{\,}\!\int_0^1\!\lambda(t)
\left(\int_0^1\int_0^1\! \dfrac{h_\xi(tzr^{\nu/\delta}s^{\mu/\delta})}{tzr^{\nu/\delta}s^{\mu/\delta}}drds-
\dfrac{1+g_{\mu,\nu}^\delta(t)}{2}\right)dt\geq0
\end{align*}
or equivalently,

\begin{align*}
{\rm Re}{\,}\!\int_0^1\!\lambda(t)
\left(\dfrac{\delta^2}{\mu\nu}\int_0^1\int_0^1\! \dfrac{h_\xi(tzuv)}{tzuv}u^{\delta/\nu-1}v^{\delta/\mu-1}dudv-
\dfrac{1+g_{\mu,\nu}^\delta(t)}{2}\right)dt\geq0.
\end{align*}
Moreover, changing the variable $tu=\omega$ gives

\begin{align*}
{\rm Re}\int_0^1\!\dfrac{\lambda(t)}{t^{\delta/\nu}}\left(\dfrac{\delta^2}{\mu\nu}\int_0^t\int_0^1
\dfrac{h_\xi(\omega zv)}{\omega zv}\omega^{\delta/\nu-1}v^{\delta/\mu-1}d\omega dv-t^{\delta/\nu}\dfrac{1+g_{\mu,\nu}^\delta(t)}{2}\right)dt\geq0.
\end{align*}
Now, integrating the above expression with respect to $t$ and
using \eqref{eq-weighted:Lambda} leads to

\begin{align*}
{\rm Re}\int_0^1\Lambda_\nu^\delta(t) \dfrac{d}{dt}\left(\dfrac{\delta^2}{\mu\nu}\int_0^t\int_0^1
\dfrac{h_\xi(\omega zv)}{\omega zv}\omega^{\delta/\nu-1}v^{\delta/\mu-1}d\omega dv-t^{\delta/\nu}\dfrac{1+g_{\mu,\nu}^\delta(t)}{2}\right)dt\geq0,
\end{align*}
which on further using
\eqref{eq-generalized:starlike-g:gamma>0} gives

\begin{align*}
{\rm Re}
\!\int_0^1\!\Lambda_\nu^\delta(t)t^{\delta/\nu-1}\!\left(\int_0^1\!\left(
\dfrac{h_\xi(tzv)}{tzv}-\dfrac{1-\xi(1+vt)}{(1-\xi)(1+vt)^2}\right)v^{\delta/\mu-1}dv\right)dt\geq0.
\end{align*}
Changing the variable $tv=\eta$ gives

\begin{align*}
{\rm Re}
\!\int_0^1\!\Lambda_\nu^\delta(t)t^{\delta/\nu-\delta/\mu-1}\!\left(\int_0^t\!\left(
\dfrac{h_\xi(\eta z)}{\eta z}-\dfrac{1-\xi(1+\eta)}{(1-\xi)(1+\eta)^2}\right)\eta^{\delta/\mu-1}d\eta\right)dt\geq0.
\end{align*}
Now, integrating with respect to $t$ and using
\eqref{eq-weighted:Pi}, finally gives

\begin{align*}
{\rm Re}
\!\int_0^1\!\Pi_{\mu,\nu}^\delta(t)\dfrac{d}{dt}\left(\int_0^t\!\left(
\dfrac{h_\xi(\eta z)}{\eta z}-\dfrac{1-\xi(1+\eta)}{(1-\xi)(1+\eta)^2}\right)\eta^{\delta/\mu-1}d\eta\right)dt\geq0
\end{align*}
or equivalently,

\begin{align*}
{\rm Re}\int_0^1\!\Pi_{\mu,\nu}^\delta(t)t^{\delta/\mu-1}\left(
\dfrac{h_\xi(tz)}{tz}-\dfrac{1-\xi(1+t)}{(1-\xi)(1+t)^2}\right)dt\geq0,
\end{align*}
which means that
$\mathcal{N}_{\Pi_{\mu,\nu}^\delta}(h_\xi)\geq0$ and this
completes the proof.

Now, to verify the sharpness let
$f(z)\in\mathcal{W}_\beta^\delta(\alpha,\gamma)$, therefore it
satisfies the differential equation

\begin{align}\label{eq-generalized-f/z-extremal}
\dfrac{\mu\nu}{\delta^2}{\,} z^{1-\delta/\mu}\left(z^{\delta/\mu-\delta/\nu+1}
\left( z^{\delta/\nu}\left(\dfrac{f}{z}\right)^\delta\right)'\right)'
=\beta+(1-\beta)\dfrac{1+z}{1-z}
\end{align}
with $\beta<1$ defined in
\eqref{Beta-Cond-Generalized:Starlike}. From
\eqref{eq-generalized-f/z-extremal}, an easy computation gives

%\begin{align*}
%\left(\dfrac{f}{z}\right)^\delta=1+2(1-\beta)\sum_{n=1}^\infty \dfrac{\delta^2z^n}{(\delta+n\nu)(\delta+n\mu)}
%\end{align*}
%or equivalently,
\begin{align*}
z\left(\dfrac{f}{z}\right)^\delta=z+2(1-\beta)\sum_{n=1}^\infty \dfrac{\delta^2z^{n+1}}{(\delta+n\nu)(\delta+n\mu)}.
\end{align*}
Using the above expression,
\eqref{eq-weighted-integralOperator} gives

\begin{align}\label{eq-F(t)-series-form}
z\left(\dfrac{F_\delta(z)}{z}\right)^\delta
%&
=z\int_0^1 \lambda(t)\left(\dfrac{f(tz)}{tz}\right)^\delta dt%\nonumber\\
%=\int_0^1\dfrac{\lambda(t)}{t}tz\left(\dfrac{f(tz)}{tz}\right)^\delta dt\nonumber\\
%&=\int_0^1\dfrac{\lambda(t)}{t}\left(tz+2(1-\beta)\sum_{n=1}^\infty \dfrac{\delta^2(tz)^{n+1}}{(\delta+n\nu)(\delta+n\mu)}\right)dt\nonumber\\
%&
=z+2(1-\beta)\sum_{n=1}^\infty\dfrac{\delta^2\tau_n z^{n+1}}{(\delta+n\nu)(\delta+n\mu)}
\end{align}
where $\tau_n=\int_0^1t^n\lambda(t) dt$.

The function $g_{\mu,\nu}^\delta$ defined in
\eqref{eq-generalized:starlike-integral:g:gamma>0} has its
series expansion as

\begin{align}\label{eq-gener:starlike-g:series}
g_{\mu,\nu}^\delta(t)=1+\dfrac{2\delta^2}{(1-\xi)}\sum_{n=1}^{\infty}\dfrac{(-1)^n(n+1-\xi)t^n}{(n\nu+\delta)(n\mu+\delta)},
\end{align}
which can further be represented in the form of generalized
hypergeometric function as

\begin{align}\label{eq-gener:starlike-g:hyper:series}
g_{\mu,\nu}^\delta(t)=2{\,}{\,}_4F_3\left(1,(2-\xi),\dfrac{\delta}{\mu},\dfrac{\delta}{\nu};{\,}
(1-\xi),\left(1+\dfrac{\delta}{\mu}\right),\left(1+\dfrac{\delta}{\nu}\right);{\,}-t\right)-1.
\end{align}
Now using \eqref{eq-gener:starlike-g:series} in
\eqref{Beta-Cond-Generalized:Starlike} gives

\begin{align}\label{eq-beta-series}
\dfrac{\beta}{(1-\beta)}
%&=-\int_0^1\lambda(t)\left(1+\dfrac{2\delta^2}
%{(1-\xi)}\sum_{n=1}^{\infty}\dfrac{(-1)^n(n+1-\xi)t^n}{(n\nu+\delta)(n\mu+\delta)}\right)dt\nonumber\\
%&
=-1-\dfrac{2\delta^2}{(1-\xi)}\sum_{n=1}^{\infty}\dfrac{(-1)^n(n+1-\xi)\tau^n}{(n\nu+\delta)(n\mu+\delta)}.
\end{align}
From \eqref{eq-F(t)-series-form}, it is easy to see that

\begin{align*}
\left(z\!\left(\!\dfrac{F_\delta(z)}{z}\!\right)^\delta\right)'=1+2(1-\beta)\sum_{n=1}^\infty\dfrac{(n+1)\delta^2\tau_n z^{n}}{(\delta+n\nu)(\delta+n\mu)},
\end{align*}
which means that

\begin{align*}
z\left.\left(z\left(\dfrac{F_\delta(z)}{z}\right)^\delta\right)'\right|_{z=-1}
%=-1+&2\delta^2(1-\beta)\sum_{n=1}^\infty\dfrac{(-1)^{n+1}(n+1)\tau_n }{(\delta+n\nu)(\delta+n\mu)}\\
=-1+&2\delta^2(1-\beta)\sum_{n=1}^\infty\dfrac{(-1)^{n+1}(n+1-\xi)\tau_n }{(\delta+n\nu)(\delta+n\mu)}\\
&+2\xi\delta^2(1-\beta)\sum_{n=1}^\infty\dfrac{(-1)^{n+1}\tau_n }{(\delta+n\nu)(\delta+n\mu)}.
\end{align*}
From \eqref{eq-beta-series}, an easy computation gives

\begin{align*}
z\left.\left(z\left(\dfrac{F_\delta(z)}{z}\right)^\delta\right)'\right|_{z=-1}
=-\xi+2\xi\delta^2(1-\beta)\sum_{n=1}^\infty\dfrac{(-1)^{n+1}\tau_n }{(\delta+n\nu)(\delta+n\mu)}.
\end{align*}
Further using \eqref{eq-F(t)-series-form}, the above expression
is equivalent to

\begin{align*}
z\left.\left(z\left(\dfrac{F_\delta(z)}{z}\right)^\delta\right)'\right|_{z=-1}
=\xi {\,}\left.z\left(\dfrac{F_\delta(z)}{z}\right)^\delta\right|_{z=-1}
\end{align*}
which clearly implies the sharpness of the result.
\qedhere
%\end{proof}
%
%

%\noindent{\bf Acknowledgment.}  One of the authors is thankful to Ministry of Human Resources and Development
%(MHRD), India for the financial support to carry out this research work.

\end{document}